\newcommand{\checkedproof}{}
\DeclareMathAlphabet\mathbb{U}{msb}{m}{n}
\newtheorem{theorem}{Theorem}[section]
\newtheorem{corollary}[theorem]{Corollary}
\newtheorem{proposition}[theorem]{Proposition}
\newtheorem{lemma}[theorem]{Lemma}
\theoremstyle{definition}
\newtheorem{definition}[theorem]{Definition}
\theoremstyle{remark}
\newtheorem{remark}[theorem]{Remark}
\newcommand{\macrocolor}{}
\def\notationcolor{}
\newcommand{\proofs}[1]{}
\newcommand{\Gq}[2]{{\macrocolor\cG^{#2}(#1)}}
\newcommand{\Gb}[2]{{\macrocolor\cE^{#2}(#1)}}
\newcommand{\Gm}[2]{{\macrocolor\cE^{#2}_{\cM}(#1)}}
\newcommand{\Gn}[2]{{\macrocolor\cN^{#2}(#1)}}
\newcommand{\Gbloc}[2]{{\macrocolor\cE^{#2}_{loc}(#1)}}
\newcommand{\Gmloc}[2]{{\macrocolor\cE^{#2}_{\cM,loc}(#1)}}
\newcommand{\Gnloc}[2]{{\macrocolor\cN^{#2}_{loc}(#1)}}
\newcommand{\Gloc}[2]{{\macrocolor\cG^{#2}_{loc}(#1)}}
\newcommand{\SGloc}[1]{{\macrocolor\cG^{#1}_{loc}}}
\newcommand{\Gtb}[2]{{\macrocolor\cT_{\cE}^{#2}(#1)}}
\newcommand{\Gt}[2]{{\macrocolor\cT_{\cG}^{#2}(#1)}}
\newcommand{\Gd}{{\macrocolor\cG^d(\Omega)}} % only used this way here
\newcommand{\sGd}{{\macrocolor\cG^d}}
\newcommand{\shgrs}{{\macrocolor\widehat\cG^r_s}} % sheaf hat g ^r_s
\newcommand{\hgrs}[1]{{\macrocolor\shgrs(#1)}} % hat g ^r_s ( #1 )
\newcommand{\shg}{{\macrocolor\widehat\cG}} % sheaf hat g
\newcommand{\hg}[1]{{\macrocolor{\shg(#1)}}} % hat g (#1)
\newcommand{\Gnb}[1]{{\macrocolor\cE(#1)}}
\newcommand{\Gnm}[1]{{\macrocolor\cE_M(#1)}}
\newcommand{\Gnn}[1]{{\macrocolor\cN(#1)}}
\newcommand{\Gen}[1]{{\macrocolor\cG(#1)}} % braucht alles neue Namen ...
\newcommand{\cTrsM}{{\macrocolor\mathcal{T}^r_s(M)}}
\newcommand{\Linc}{{\macrocolor\mathcal{L}}} % linear continuous mappings
\newcommand{\Lincb}{{\macrocolor\mathcal{L}}} % linear continuous mappings with strong topology
\newcommand{\Lincbmult}[1]{{\macrocolor\mathcal{L}^{#1}}} % multilinear continuous mappings with strong topology
\newcommand{\Linbb}{{\macrocolor\mathrm{L}}} % linear bounded mappings with strong topology
\newcommand{\Linbbmult}[1]{{\macrocolor\mathrm{L}^{#1}}} % multilinear bounded mappings with strong topology
\newcommand{\Hom}{{\macrocolor\operatorname{Hom}}}
\newcommand{\csn}[1]{{\macrocolor{\operatorname{csn}}}(#1)} % continuous seminorms
\newcommand{\ud}{\mathrm{d}} % differential
\newcommand{\fX}{\mathfrak{X}} % smooth vector fields
\newcommand{\cD}{\mathcal{D}}
\newcommand{\Lsk}{\Lie^{\mathrm{SK}}}
\newcommand{\TO}[1]{{\macrocolor S(#1)}} % scalar test objects
\newcommand{\TOz}[1]{{\macrocolor S^0(#1)}} % scalar 0-test objects
\newcommand{\TOv}[1]{{\macrocolor S(#1)}} % vector test objects
\newcommand{\TOvl}[1]{{\macrocolor S^1(#1)}} % localizing vector test objects
\newcommand{\TOvn}[1]{{\macrocolor N(#1)}} % for the quotient of vector test objects
\newcommand{\TOvz}[1]{{\macrocolor S^0(#1)}} % vector 0-test objects
\newcommand{\STOvl}{{\macrocolor S^1}} % localizing vector test objects
\newcommand{\STOvz}{{\macrocolor S^0}} % vector 0-test objects
\newcommand{\TOqv}[1]{{\macrocolor \widetilde S(#1)}} % vector test objects
\newcommand{\TOqvl}[1]{{\macrocolor \widetilde S^1(#1)}} % localizing vector test objects
\newcommand{\TOqvz}[1]{{\macrocolor \widetilde S^0(#1)}} % vector 0-test objects
\newcommand{\pd}{\partial}
\newcommand{\e}{\varepsilon}
\newcommand{\bC}{\mathbb{C}}
\newcommand{\bE}{\mathbb{E}}
\newcommand{\bF}{\mathbb{F}}
\newcommand{\bK}{\mathbb{K}}
\newcommand{\bN}{\mathbb{N}}
\newcommand{\bR}{\mathbb{R}}
\newcommand{\Riem}{\mathrm{\mathbf{R}}}
\newcommand{\cE}{\mathcal{E}}
\newcommand{\cF}{\mathcal{F}}
\newcommand{\cG}{\mathcal{G}}
\newcommand{\cM}{\mathcal{M}}
\newcommand{\cN}{\mathcal{N}}
\newcommand{\cT}{\mathcal{T}}
\newcommand{\csub}{\subset \subset}
\newcommand{\coleq}{\mathrel{\mathop:}=}
\newcommand{\eqcol}{=\mathrel{\mathop:}}
\DeclareMathOperator{\Fl}{Fl}
\DeclareMathOperator{\ev}{ev}
\DeclareMathOperator{\id}{id}
\DeclareMathOperator{\Vol}{Vol}
\DeclareMathOperator{\supp}{supp}
\DeclareMathOperator{\pr}{pr}
\newcommand{\Lie}{\mathrm{L}}
\providecommand{\norm}[1]{\left\lVert#1\right\rVert}
\providecommand{\abso}[1]{\left\lvert#1\right\rvert}
    \let\OLD@otimes\otimes
    \def\opotimes{\@ifnextchar_\otimesWSB\otimesNSB}
    \def\otimesWSB_#1{\@ifnextchar^{\otimesWSBWXP_{#1}}{\otimesWSBNXP_{#1}}}
    \def\otimesWSBNXP_#1{\mathbin{\operatorname*{\OLD@otimes}_{#1}}}
    \def\otimesWSBWXP_#1^#2{\mathbin{\operatorname*{\OLD@otimes}_{#1}^{#2}}}
    \def\otimesNSB{\@ifnextchar^\otimesWXP\OLD@otimes}
    \def\otimesWXP^#1{\@ifnextchar_{\otimesWXPWSB^{#1}}{\otimesWXPNSB^{#1}}}
    \def\otimesWXPNSB^#1{\mathbin{\operatorname*{\OLD@otimes}^{#1}}}
    \def\otimesWXPWSB^#1_#2{\mathbin{\operatorname*{\OLD@otimes}^{#1}_{#2}}}
\newcommand{\cH}{{\mathscr{H}}}
\newcommand{\rSO}{\rho^{SO}} % restriction of Smoothing Operators
\newcommand{\csna}{\mathfrak{p}}
\newcommand{\csnb}{\mathfrak{q}}
\newcommand{\csnc}{\mathfrak{r}}
\newcommand{\SK}[1]{{\macrocolor\mathrm{SK}(#1)}}
\newcommand{\Lso}{\Lie^{\mathrm{SO}}}
\newcommand{\Lieh}{{\widehat\Lie}}
\newcommand{\Liet}{{\widetilde\Lie}}
\newcommand{\SO}[1]{{\macrocolor\mathrm{SO}(#1)}}
\newcommand{\VSO}[1]{{\macrocolor\mathrm{VSO}(#1)}}
\DeclareMathOperator{\carr}{carr}
\newcommand{\D}{\mathrm{D}}
\def\mytitle{Nonlinear generalized sections\\of vector bundles}
\begin{document}
\author{E.~A.~Nigsch}
\date{}
\title{\mytitle}

\maketitle

%\DoubleSpacing

\begin{abstract}
 We present an extension of J.-F.\ Colombeau's theory of nonlinear generalized functions to spaces of generalized sections of vector bundles. Our construction builds on classical functional analytic notions, which is the key to having a canonical geometric embedding of vector bundle valued distributions into spaces of generalized sections. This permits to have tensor products, invariance under diffeomorphisms, covariant derivatives and the sheaf property. While retaining as much compatibility to L.~Schwartz' theory of distributions as possible, our theory provides the basis for a rigorous and general treatment of singular pseudo-Riemannian geometry in the setting of Colombeau nonlinear generalized functions.
\end{abstract}

{\bfseries Keywords: } nonlinear generalized functions, diffeomorphism invariance, Colombeau algebra, singular pseudo-Riemannian geometry, covariant derivative

{\bfseries MSC2100 Classification: } 46T30

%\maxsecnumdepth{subsubsection}

%\tableofcontents

%\newpage

\section{Introduction}

The theory of distributions, founded by S.~Sobolev \cite{0014.05902} and L.~Schwartz \cite{TD}, has been very successfully applied in many fields such as the study of partial differential equations, Fourier analysis, engineering and theoretical physics (see, e.g., \cite{zbMATH02208228,Hoermander,zbMATH03187905,Treves,zbMATH03421400}). However, it does not adapt well to nonlinear problems because of the inherent difficulty to define nonlinear operations on distributions.

The problem of \emph{multiplication of distributions} has been approached in various ways which mainly fall into two classes: either one defines the product only for certain pairs of distributions, or one embeds the space $\cD'$ of distributions into an algebra. The core issue to overcome in this is a certain algebraic incompatibility between differentiation, multiplication and singular functions that is made precise by L.~Schwartz' \emph{impossibility result} \cite{Schwartz}, which states that there cannot be an associative commutative differential algebra containing $\cD'$ as a linear subspace such that the constant function $1$ becomes the multiplicative unit and the partial derivatives of distributions as well as the pointwise product of continuous functions are preserved in the algebra.

While this result was commonly interpreted to preclude any reasonable multiplication of distributions, one can in fact construct algebras of generalized functions containing $\cD'$ by weakening the above requirements in one form or another; see \cite{Rosinger,MOBook,GKOS} for a comprehensive overview on what is possible. A particularly well-known and widely used approach has been introduced by J.~F.~Colombeau (\cite{ColNew, ColElem}), who constructed differential algebras of generalized functions containing $\cD'$ and preserving the product not of continuous but of \emph{smooth} (i.e., infinitely differentiable) functions.

These \emph{Colombeau algebras}, as they are commonly called, have been developed further (\cite{Biagioni,colmult, GKOS, zbMATH01226424, MOBook}) and applied successfully in a wide variety of fields, ranging from linear and nonlinear partial differential equations with singular data or singular coefficients (see \cite{Nedeljkov} for a recent survey) over pseudodifferential operators and Fourier integral operators with non-smooth symbols (\cite{zbMATH05208728,zbMATH05566552,zbMATH05357102,zbMATH05002685}) to the investigation of topological and algebraic structures in Colombeau generalized function spaces (\cite{claudia1,claudia2,zbMATH05356718,zbMATH05657323,zbMATH05768499}).
A particular development in the theory of Colombeau algebras concerns a geometric formulation of the theory with the aim of giving a comprehensive framework for problems of non-smooth differential geometry with applications in geophysics, Lie group analysis of differential equations or general relativity (\cite{found,global,MKnonsmooth,KSVintrins,Kgenmf,GKOS, GroupAnalysis, genrel}).

In this geometric setting there are several highly interesting, physically relevant results which could not be obtained by distributional methods alone (\cite{zbMATH01019155, geodesics,Steinbauer:98,JVClarke,zbMATH01364124,genconcurv,Kgenmf,KSVintrins,zbMATH02073427}); however, progress in the geometric theory of Colombeau algebras, in particular the study of generalized sections of vector bundles and generalized pseudo-Riemannian geometry, has been mostly limited to the \emph{special} variant of these generalized function spaces so far (\cite{foundgeom,genpseudo}). This is a simplified variant of the theory which is easier to calculate with, but has several drawbacks which in a sense preclude genuine geometrical results (cf.~\cite[Section 3.2.2]{GKOS}); in particular, for special Colombeau algebras there is no induced action of diffeomorphisms extending the classical pullback of distributions, there is no canonical embedding of distributions, and no embedding of distributions can commute with arbitrary Lie derivatives.

In contrast to the special variant one has the so-called \emph{full} variant of Colombeau algebras. There, the drawbacks of the special algebra just mentioned do not appear but are traded in for a more complicated technical apparatus needed for the formulation of the theory. A diffeomorphism invariant local theory, substantially based on previous work of Colombeau and Meril \cite{colmani} and Jel\'inek \cite{Jelinek}, was for the first time obtained in \cite{found}, where the full diffeomorphism invariant algebra $\Gd$ on open subsets $\Omega \subseteq \bR^n$ was presented. The generalization to manifolds, which involved a change of formalism because $\Gd$ still was based on the linear structure of $\bR^n$, was accomplished in \cite{global} with the introduction of the full Colombeau algebra $\hg M$ on any manifold $M$.

In a next step it was naturally very desirable to have an extension to a theory of generalized sections of vector bundles, and in particular a theory of nonlinear generalized tensor fields suitable for applications in (pseudo-)Riemannian geometry. The basic problem encountered in this case is that one cannot use a coordinatewise embedding: simply defining $\hgrs M \coleq \hg M \otimes_{C^\infty(M)} \cTrsM$ for generalized $(r,s)$-tensor fields (where $\cTrsM$ is the space of smooth $(r,s)$-tensor fields on $M$) cannot succeed due to a consequence of the Schwartz impossibility result (\cite[Proposition 4.1]{global2}). The underlying reason, which will be detailed in Section \ref{subsec_smoothop}, is that the embedding of distributions into Colombeau algebras always involves some kind of regularization, but in order to regularize non-smooth or distributional sections of a vector bundle one needs to transport vectors between different points of the manifold (cf.~\cite{papernew}); based on these ideas, in \cite{global2} an algebra $\bigoplus_{r,s} \shgrs$ of generalized tensor fields incorporating the necessary modifications was constructed.

This full generalized tensor algebra, however, also suffered from serious drawbacks quite different from those of special algebras:
\begin{enumerate}[label=(\roman*)]
 \item $\shgrs$ inherits all the technical difficulties from $\sGd$ and $\shg$ and adds even more on top of them, which makes it rather inaccessible for non-specialists and precludes easy applications.
 \item $\shgrs$ is not a sheaf: the corresponding proof which worked in all previous algebras (cf., e.g., \cite[Section 8]{found}) breaks down due to the failure of the test objects to be `localizing' in a certain sense.
 \item There is no way to define a meaningful covariant derivative $\nabla_X$ on $\hgrs M$ that is $C^\infty(M)$-linear in the vector field $X$, which would be an indispensable necessity for geometrical applications like the definition of generalized curvature (cf.~\cite{papernew}).
\end{enumerate}
Summing up, despite its achievements $\shgrs$ was still unsatisfactory and raised fundamental questions about how to proceed, in general.

The latest turning point in this development was the introduction of a functional analytic approach to Colombeau algebras (developed for the scalar case in \cite{papernew}) that both unifies and simplifies previous constructions on a conceptual level as well as provides the structural framework for developing the global vector-valued case in a natural way, overcoming points (i)--(iii).

The aim of this article is to give the details of this construction for the case of generalized sections of vector bundles on manifolds. We will, in a very general way, introduce these spaces and show how fundamental concepts like tensor products, the sheaf property, covariant derivatives, pseudo-Riemannian metrics and the curvature tensor can be obtained in a natural and efficient way in this context. This constitutes a new foundation of the geometric theory of Colombeau algebras and its application to singular (pseudo-)Riemannian geometry.

\section{Preliminaries}

In this article, the following conventions will be used. $\bN$, $\bN_0$, $\bR$ and $\bC$ are the sets of positive and nonnegative integers, real and complex numbers, respectively. On any $\bR^n$, $\norm{.}$ denotes the Euclidean norm. We set $\bK \coleq \bR$ or $\bK \coleq \bC$ throughout, depending on whether we want to consider real or complex distributions, and $I \coleq (0,1]$.
A family indexed by a set $J$ is denoted by $(x_j)_{j \in J}$ or simply $(x_j)_j$ if the index set is clear from the context. The equivalence class of an element $x$ with respect to some given equivalence relation is denoted by $[x]$, while $[y \mapsto f(y)]$ denotes a function $f$ depending on a variable $y$. We use the usual multiindex notation $\pd^\alpha = \pd_1^{\alpha_1} \dotsm \pd_n^{\alpha_n}$ for $\alpha = (\alpha_1, \dotsc, \alpha_n) \in \bN_0^n$ and $n \in \bN$, where $\pd_i$ is the $i$-th partial derivative, and set $\abso{\alpha} = \alpha_1 + \dotsc + \alpha_n$. If the differentiation variable needs to be specified we write $\pd_x^\alpha$ or $\pd_{x_i}$. We employ the Landau notation $f_\e = O(g_\e)$ ($\e \to 0$) for nets $(f_\e)_\e$ and $(g_\e)_\e$. $\id$ denotes the identity map. The restriction of a function $f$ to a set $U$ is denoted by $f|_U$. $\carr f$ and $\supp f$ denote the set $\{ x: f(x) \ne 0 \}$ and its closure, respectively. We write $U \csub V$ if the closure $\overline{U}$ is compact and contained in the interior of $V$.

As basic references for locally convex spaces we use \cite{Jarchow,zbMATH03230708}. Let $\bE, \bF, \dotsc$ be locally convex spaces over $\bK$; by $\Lincb(\bE,\bF)$ and $\Linbb(\bE,\bF)$ we denote the space of linear maps from $\bE$ to $\bF$ which are continuous or bounded, respectively. Similarly, $\Lincbmult k(\bE_1 \times \dotsc \times \bE_k,\bF)$ and $\Linbbmult k (\bE_1 \times \dotsc \times \bE_k,\bF)$ denote the space of continuous or bounded $k$-multilinear maps, respectively. All these spaces are by default endowed with the topology of uniform convergence on bounded sets (\cite[Section 8.4]{Jarchow}). By $\csn \bE$ we denote the set of continuous seminorms on $\bE$.

Given a (commutative) ring $R$ and two $R$-modules $A$ and $B$, we denote by $\Hom_R(A,B)$ the set of $R$-module homomorphisms from $A$ to $B$. Our references for module theory are \cite{Adkins,Bourbaki}.

For differential calculus on arbitrary locally convex spaces we employ the \emph{convenient setting} of \cite{KM}. $C^\infty(\bE, \bF)$ is the space of smooth mappings in this sense. If $\bE$ is finite dimensional this notion of smoothness is the classical one (cf.\ \cite[Chapter 40]{Treves}). We write $C^\infty(\bE)$ in place of $C^\infty(\bE, \bK)$. The main features we need of this calculus will be the exponential law $C^\infty(\bE_1 \times \bE_2, \bF) \cong C^\infty(\bE_1, C^\infty(\bE_2, \bF))$ and the differentiation operator $\ud\colon C^\infty(\bE, \bF) \to C^\infty(\bE, \Linbb(\bE, \bF))$.

In terms of differential geometry we will mainly follow \cite{Lang}. All manifolds will be assumed to be real, Hausdorff and paracompact. A vector bundle $E$ over a manifold $M$ with projection $\pi$ is denoted by $\pi\colon E \to M$ or simply $E \to M$. $\Gamma(E)$ denotes the space of smooth sections of $E$ and $\Gamma_c(E)$ the subset of those with compact support. For $x \in M$, $E_x = \pi^{-1}(x)$ denotes the fiber of $E$ over $x$. For any open subset $U \subseteq M$, $\Gamma(U, E)$ and $\Gamma_c(U, E)$ denote the respective sets of sections over $U$. For $f \in C^\infty(M)$, $m_f \colon \Gamma(E) \to \Gamma(E)$ denotes fiberwise multiplication by $f$. Given an atlas $(U_i, \varphi_i)_i$ of $M$, $\Gamma(U, E)$ is endowed with the projective topology with respect to the mappings $(\varphi_i^{-1})^*$ into the spaces $C^\infty(\varphi_i(U_i), \bE)$ containing the local representations of sections, where $\bE$ is the typical fiber of $E$. $\Gamma_c(U, E)$ is endowed with the corresponding (LF)-topology.
If $U \subseteq M$ is open and $\tau \colon \pi^{-1}(U) \to U \times \bK^{\dim E}$ trivializing, a section $s \in \Gamma(E)$ has local coordinates $s^i \in C^\infty(U)$ for $i=1\dotsc \dim E$. Given two vector bundles $\pi \colon E \to M$ and $\pi' \colon E' \to M'$ and a pair of morphisms $f_0 \colon M \to M'$ and $f \colon E \to E'$ such that $\pi' \circ f = f_0 \circ \pi$ and $f$ is fiberwise linear, we call $f$ a vector bundle morphism over $f_0$. 
By $f^* \colon \Gamma(E') \to \Gamma(E)$ we denote the pullback of sections and if $f$ is a diffeomorphism we set $f_* \coleq (f^{-1})^*$; this notation will also be used for the pullback of distributions. $\fX(M)$ denotes the vector space of smooth vector fields on $M$ and for $X \in \fX(M)$, $\Fl^X_t$ denotes the flow of $X$ at time $t$. Given a Riemannian metric $g$ on a manifold $M$, $B^g_r(x)$ denotes the metric ball of radius $r$ around $x \in M$ and $d_g (x,y)$ the induced Riemannian distance between two points $x,y \in M$ induced by $g$.

For basic notions of sheaf theory we refer to \cite{zbMATH03489364,MR0345092}.

Our references for distribution theory are \cite{TD,zbMATH03230708} for the local case and \cite{GKOS} for distributions on manifolds. Contrary to L.~Schwartz' we will write $C^\infty(\Omega)$ instead of $\cE(\Omega)$ for the space of smooth functions on an open subset $\Omega \subseteq \bR^n$; still, we denote the strong dual of $C^\infty(\Omega)$ by $\cE'(\Omega)$.

Let a manifold $M$ and a vector bundle $E \to M$ be given. With $\Vol(M)$ denoting the volume bundle of $M$, 
 (scalar) distributions on $M$ are defined to be elements of $\cD'(M) \coleq ( \Gamma_c(\Vol(M)) )' = \Lincb(\Gamma_c(\Vol(M)), \bK)$ and $E$-valued distributions are defined to be elements of $\cD'(M,E) \coleq (\Gamma_c(M, E^* \otimes \Vol(M)))'$ where $E^*$ is the dual bundle of $E$. Note the $C^\infty(M)$-module isomorphism $\cD'(M, E) \cong \Gamma(E) \otimes_{C^\infty(M)} \cD'(M)$ (cf.~\cite{vecreg,sectop}) 

These spaces of distributions are always endowed with the strong topology.

We denote the local coordinates of a distribution $u \in \cD'(M,E)$ by $u^i \in \cD'(U) \cong \cD'(\varphi(U))$ for $i=1\dotsc \dim E$.

\subsection{Smoothing operators}\label{subsec_smoothop}

The core idea of the functional analytic approach to Colombeau algebras rests on a variant of L.~Schwartz' kernel theorem \cite[Th\'eor\`eme 3]{FDVV}, namely the topological vector space isomorphism (for any open set $\Omega \subseteq \bR^n$)
\begin{equation}\label{kthm}
\SO \Omega \coleq \Lincb(\cD'(\Omega), C^\infty(\Omega)) \cong C^\infty(\Omega, \cD(\Omega)) \eqcol \SK \Omega.
\end{equation}
This isomorphism expresses the fact that distributions are regularized in a reasonable (i.e., linear and continuous) way \emph{exactly} by applying them to elements of $C^\infty(\Omega, \cD(\Omega))$, the latter space carrying the topology of uniform convergence on compact sets in all derivatives (\cite[Definition 40.2]{Treves}). Explicitly, the correspondence between \emph{smoothing operators} $\Phi \in \SO \Omega$ and \emph{smoothing kernels} $\vec\varphi \in \SK \Omega$ is given by $(\Phi u)(x) \coleq \langle u, \vec\varphi(x) \rangle$ and $\vec\varphi(x) \coleq \Phi^t (\delta_x)$ for $u \in \cD'(\Omega)$ and $x \in \Omega$,
where $\Phi^t \in \Lincb(\cE'(\Omega), \cD(\Omega))$ is the transpose of $\Phi$. The advantage of using smoothing operators instead of smoothing kernels is that this notion easily extends to vector valued distributions:

\begin{definition}
 Let $\pi\colon E \to M$ be a vector bundle and $U \subseteq M$ open. Then we call
\[ \VSO{U, E} \coleq \Lincb(\cD'(U,E), \Gamma(U,E)) \]
 the space of \emph{vector smoothing operators} on $E$ over $U$. We write $\VSO{E}$ instead of $\VSO{M, E}$. For $f \in C^\infty(U)$ and $\Phi \in \VSO{U,E}$ we set $f \cdot \Phi \coleq m_f \circ \Phi$, which turns $\VSO{U,E}$ into a $C^\infty(U)$-module.
\end{definition}
We remark that a representation similar to \eqref{kthm} can be obtained for vector smoothing operators (cf.~\cite{vecreg}) as
\begin{equation}\label{vecblahiso}
\VSO{E} \cong \Gamma(E^* \boxtimes E) \opotimes_{\mathclap{C^\infty(M \times M)}} C^\infty(M, \Vol(M))
\end{equation}
where $\boxtimes$ denotes the external tensor product of vector bundles (\cite[Chapter II.5]{GHV}).
In hindsight, this isomorphism completely explains the role of \emph{transport operators} (i.e., elements of $\Gamma(E^* \boxtimes E)$), which in \cite{global2} have been combined with smoothing kernels $\Phi \in C^\infty(M, \Vol(M))$ on heuristical grounds in order to regularize $E$-valued distributions. We base our presentation on vector smoothing operators not only because the representation as a tensor product would introduce unnecessary complications for our general purpose, but also because the formalism of smoothing operators can immediately be adapted for other spaces of distributions which are not necessarily given by dual spaces of appropriate spaces of test functions (\cite{vecreg}).

Suppose $E$ is trivializable over an open subset $U \subseteq M$ and $\dim E = m$. Then $\Gamma(U, E) \cong C^\infty(U)^m$ and $\cD'(U, E) \cong \cD'(U)^m$, hence every $\Phi \in \VSO{U,E}$ is represented by an $m \times m$-matrix $(\Phi_{ij})_{i,j=1 \dotsc m}$ with $\Phi_{ij} \in \SO U$ such that $\Phi(u)^i = \sum_j \Phi_{ij} (u^j)$:
\[
 \xymatrix{
\cD'(U, E)	& \cD'(U)^m	& \cD'(U) \\
\Gamma(U, E)	& C^\infty(U)^m	& C^\infty(U)
\ar"1,1";"2,1"_{\Phi}
\ar@{}"1,1";"1,2"|-*[@]\txt{$\cong$}
 \ar"1,3";"1,2"_-{\iota_j}
\ar@{}"2,1";"2,2"|-*[@]\txt{$\cong$}
\ar"2,2";"2,3"^-{\pr_i}
\ar"1,3";"2,3"^{\Phi_{ij}}
}
\]
Here, $\iota_j$ and $\pr_i$ denote the canonical injection of the $j$th and projection to the $i$th component, respectively.

\section{Review of the scalar case}\label{sec_revscalar}

\def\GFa{{\notationcolor F}}

We first recall the construction of the scalar Colombeau algebra on an open subset $\Omega \subseteq \bR^n$ in the functional analytic setting from \cite{papernew} in order to introduce some fundamental concepts and set the stage for the global vector-valued case.

Colombeau's original construction starts with an obvious candidate for an algebra containing $\cD'(\Omega)$ and allowing for many nonlinear operations, namely $C^\infty(\cD(\Omega))$, the space of \emph{smooth} functions $\cD(\Omega) \to \bK$ (smoothness here is understood in the sense of \cite{KM}, while Colombeau's original construction was based on \cite{zbMATH03798426}).
One then forms a certain quotient of this algebra (or rather a subalgebra of it, to be precise) in which $C^\infty(\Omega)$ becomes a subalgebra. Denote by $\iota\colon \cD'(\Omega) \to C^\infty(\cD(\Omega))$ the canonical inclusion. Using reflexivity of $C^\infty(\Omega)$, which means that $C^\infty(\Omega) \cong \Lincb(\cE'(\Omega), \bK)$, one sees that $\iota$ actually maps $C^\infty(\Omega)$ into the subalgebra $C^\infty(\cE'(\Omega)) \subseteq C^\infty(\cD(\Omega))$\proofs{\#39}, where the latter inclusion is given by the restriction map (\cite[\S 3.1]{ColNew}).

Elements of the form $\iota(f)\iota(g) - \iota(fg)$ for $f,g \in C^\infty(\Omega)$, which one would like to vanish in a suitable quotient of $C^\infty(\cD(\Omega))$, evaluate to zero on any $\delta_x \in \cE'(\Omega)$, where $\delta_x$ denotes the Dirac delta distribution at $x \in \Omega$. This suggests to search for an ideal of $C^\infty(\cD(\Omega))$
containing the set
\[ \mathcal{K}(\Omega) \coleq \{\, \GFa \in C^\infty(\cE'(\Omega))\ |\ \GFa(\delta_x) = 0\ \forall x \in \Omega\,\}. \]
$\mathcal{K}(\Omega)$ itself is an ideal in $C^\infty(\cE'(\Omega))$.
Because $\GFa \in C^\infty(\cD(\Omega))$ cannot be evaluated at $\delta_x$, the property $\GFa(\delta_x)=0$ cannot be used directly for extending $\mathcal{K}(\Omega)$ to an ideal of $C^\infty(\cD(\Omega))$. The well-known classical scheme of construction of Colombeau algebras (see also \cite{MOBook,GKOS}) is based on characterizing elements $\GFa \in \mathcal{K}(\Omega)$ by evaluating them on scaled and translated test functions $\varphi_{\e,x} (y) \coleq \e^{-n} \varphi ( (y-x) / \e )$, where $\varphi \in \cD(\Omega)$ and $\e \in (0,1]$; one sees that those $\GFa \in C^\infty(\cE'(\Omega))$ which lie in $\mathcal{K}(\Omega)$ are characterized by the property that the more moments of $\varphi$ vanish, the faster $\GFa(\varphi_{\e,x})$ converges to $0$ uniformly for $x$ in compact sets when $\e \to 0$ (\cite[Proposition 3.3.3]{ColNew}).

In order to express this in functional analytic and coordinate-invariant terms we take note of the mappings $\vec \delta \in C^\infty(\Omega, \cE'(\Omega))$, $\vec\delta(x) \coleq \delta_x$ and $\widetilde \GFa \in C^\infty ( C^\infty(\Omega, \cE'(\Omega)), C^\infty(\Omega))$ defined by $\widetilde \GFa (\vec\varphi) (x) \coleq \GFa ( \vec\varphi(x) )$ for each $\GFa \in C^\infty(\cE'(\Omega))$. The property defining $\mathcal{K}(\Omega)$ can then be written as $\widetilde \GFa ( \vec \delta) = 0$. We emphasize that this is not merely a different notation but a change of viewpoint enabling one to formulate the following essential abstraction of \cite[Proposition 3.3.3]{ColNew}.

\begin{lemma}\label{colbasis}Let $\GFa \in C^\infty(\cE'(\Omega))$.
For any $\csna \in \csn { C^\infty(\Omega) }$ there exists $\csnb \in \csn { C^\infty(\Omega, \cE'(\Omega))}$ such that 
\[ \csna ( \widetilde \GFa (\vec\varphi) - \widetilde \GFa ( \vec \psi) ) \le \csnb ( \vec\varphi - \vec \psi ) \qquad \forall \vec\varphi,\vec\psi \in C^\infty(\Omega, \cE'(\Omega)). \]
In particular, if $\csna$ is given by $f \mapsto \sup_{x \in K, \abso{\alpha}\le m} \abso{(\pd^\alpha f)(x)}$ with $K \subseteq \Omega$ compact and $m \in \bN_0$, $\csnb$ is given by $\vec\varphi \mapsto \sup_{x \in K,\abso{\alpha} \le m} \csnc ( (\pd^\alpha \vec\varphi)(x) )$ for some $\csnc \in \csn { \cE'(\Omega) }$.
\end{lemma}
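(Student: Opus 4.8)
The plan is to reduce to the explicit seminorms and then to linearize the difference before differentiating. Since the seminorms $\csna_{K,m}\colon f \mapsto \sup_{x\in K,\,\abso{\alpha}\le m}\abso{(\pd^\alpha f)(x)}$ generate the topology of $C^\infty(\Omega)$, every continuous seminorm is dominated by one of these, so it suffices to treat an arbitrary $\csna$ of this form; this simultaneously produces the explicit $\csnb$ asserted in the ``in particular'' clause. For such a $\csna$ the left-hand side is $\sup_{x\in K,\,\abso\alpha\le m}\abso{\pd^\alpha_x[\GFa(\vec\varphi(x)) - \GFa(\vec\psi(x))]}$, so everything reduces to estimating the $x$-derivatives up to order $m$ of the scalar function $x\mapsto \GFa(\vec\varphi(x)) - \GFa(\vec\psi(x))$, uniformly for $x\in K$.

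First I would linearize in the distribution variable by the fundamental theorem of calculus of the convenient setting: writing $\vec\eta \coleq \vec\varphi - \vec\psi$ and $\theta_t(x) \coleq \vec\psi(x) + t\,\vec\eta(x)$, one has
\[
 \GFa(\vec\varphi(x)) - \GFa(\vec\psi(x)) = \int_0^1 \ud\GFa(\theta_t(x))(\vec\eta(x))\,\ud t,
\]
where $\ud\GFa \in C^\infty(\cE'(\Omega),\Linbb(\cE'(\Omega),\bK))$. The purpose of this step is that the factor $\vec\eta$ now enters \emph{linearly}, which is exactly what is needed to arrive at a seminorm evaluated at $\vec\varphi-\vec\psi$ on the right.

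Next I would apply $\pd^\alpha_x$, differentiate under the integral, and use the chain rule of the convenient calculus in Fa\`a di Bruno form. Each resulting summand has the shape of a derivative $\ud^{k}\GFa(\theta_t(x))$ evaluated at a single factor $\pd^\beta\vec\eta(x)$ together with several factors $\pd^\gamma\vec\varphi(x),\pd^\gamma\vec\psi(x)$, with $\abso\beta,\abso\gamma\le m$. Since $\ud^k\GFa(\theta_t(x))$ is a continuous $k$-linear form on $\cE'(\Omega)$, inserting its arguments and taking the supremum over $x\in K$ and $t\in[0,1]$ converts the single ``difference'' slot into a continuous seminorm $\csnc$ of $\cE'(\Omega)$ applied to $\pd^\beta\vec\eta(x)$, multiplied by the remaining non-difference factors. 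Collecting the finitely many contributions yields a bound of the advertised form $\csnb(\vec\eta) = \sup_{x\in K,\,\abso\beta\le m}\csnc((\pd^\beta\vec\eta)(x))$.

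The step I expect to be the main obstacle is making the bounding constants \emph{uniform}, that is, independent of $\vec\varphi,\vec\psi$, so that a single fixed seminorm $\csnb$ validates the inequality for \emph{all} $\vec\varphi,\vec\psi$. Both the multilinear forms $\ud^k\GFa(\theta_t(x))$ and the non-difference factors $\pd^\gamma\vec\varphi,\pd^\gamma\vec\psi$ depend on the base configuration, and the convenient calculus only guarantees boundedness of the $\ud^k\GFa$ on bounded subsets of $\cE'(\Omega)$. Controlling this base-point dependence, and thereby absorbing every constant into the seminorm $\csnc$ on the right, is where the smoothness of $\GFa$ and the precise description of the continuous seminorms of $C^\infty(\Omega,\cE'(\Omega))$ must be brought to bear in concert; I expect this uniformity to be the genuinely delicate heart of the argument.
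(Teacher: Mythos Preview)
Your approach---linearize via the fundamental theorem of calculus and then differentiate in $x$ using the chain rule---is exactly what the paper does (the paper phrases the first step as the mean value theorem, which places $\widetilde F(\vec\varphi)-\widetilde F(\vec\psi)$ in the closed convex hull of $\{\ud\widetilde F(\vec\psi+t(\vec\varphi-\vec\psi))(\vec\varphi-\vec\psi):t\in(0,1)\}$, but this is the same device). The Fa\`a di Bruno expansion you describe also matches the paper's expression (3.1).

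Your worry about uniformity is exactly right, and it is not a difficulty you have failed to overcome: it is a genuine gap in the statement itself. The lemma as written asserts a single $\csnb$ valid for \emph{all} $\vec\varphi,\vec\psi$, i.e.\ global uniform continuity of $\widetilde F$. This is false in general. Take $F(u)=\langle u,\phi\rangle^2$ for some fixed $\phi\in C^\infty(\Omega)$ with $\phi(0)\ne 0$, and constant maps $\vec\varphi(x)\equiv\lambda\delta_0$, $\vec\psi\equiv 0$; then $\csna(\widetilde F(\vec\varphi)-\widetilde F(\vec\psi))=\lambda^2\phi(0)^2$ while any continuous seminorm of $\vec\varphi-\vec\psi$ scales linearly in $\lambda$, so no fixed $\csnb$ can work. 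The paper's own proof does not avoid this: the bounded set $\{\ud^{k+1}F(\vec\psi(x)+t(\vec\varphi(x)-\vec\psi(x))):(t,x)\in[0,1]\times K\}$ and the bounded sets $\{\pd^{\beta_i}(\vec\psi+t(\vec\varphi-\vec\psi))(x)\}$ used to produce $\csnc$ all depend on $\vec\varphi,\vec\psi$, so the resulting $\csnc$ (and hence $\csnb$) does too.

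What both your argument and the paper's argument \emph{do} establish is the estimate with $\csnb$ allowed to depend on a bounded subset of $C^\infty(\Omega,\cE'(\Omega))$ in which $\vec\varphi,\vec\psi$ range; this is precisely what the subsequent applications need (convergent nets $\vec\varphi_\e\to\vec\delta$ are bounded, so one obtains the stated consequences about convergence and rapid convergence). So your plan is correct and complete for the result that is actually used; the obstruction you flagged is an overstatement in the lemma, not a missing idea on your part.
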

In other words, $\widetilde \GFa \colon C^\infty(\Omega, \cE'(\Omega)) \to C^\infty(\Omega)$ is uniformly continuous (although smooth functions in the sense of \cite{KM} need not even be continuous, in general). It follows that for $(\vec\varphi_\e)_\e \in \SK\Omega^{(0,1]}$ and $\GFa \in C^\infty(\Omega, \cE'(\Omega))$, $\vec\varphi_\e \to \vec\varphi$ in $C^\infty(\Omega, \cE'(\Omega))$ implies $\widetilde \GFa (\vec\varphi_\e) - \widetilde \GFa ( \vec\varphi) \to 0$ in $C^\infty(\Omega)$. Even more, $\csnb(\vec\varphi_\e - \vec\varphi) = O(\e^m)$ for all $\csnb$ and $m \in \bN$ implies $\csna(\widetilde \GFa(\vec\varphi_\e) - \widetilde \GFa(\vec\varphi)) = O(\e^m)$ for all $\csna$ and $m \in \bN$
(we call this \emph{rapid convergence}).
\begin{proof}\checkedproof
 By the mean value theorem (\cite[1.4]{KM}) and the chain rule (\cite[3.18]{KM}), $\widetilde \GFa (\vec\varphi) - \widetilde \GFa(\vec \psi)$ is contained in the closed convex hull of the set\proofs{\#40.1}
\[ \{\, \ud \widetilde \GFa(\vec\psi + t ( \vec\varphi - \vec\psi)) (\vec\varphi - \vec\psi)\ |\ t \in (0,1)\,\}. \]
Let $\csna$ be a continuous seminorm on $C^\infty(\Omega)$, which can be taken to be of the form $\csna(f)=\sup_{x \in K, \abso{\alpha}\le m} \abso{(\pd^\alpha f)(x)}$ with $K \subseteq \Omega$ compact and $m \in \bN_0$ (the family of all such seminorms forms a basis of continuous seminorms of $C^\infty(\Omega)$). For the claim to hold it suffices to show that there is $\csnb$ such that $\csna ( \ud \widetilde \GFa ( \vec\psi + t (\vec\varphi - \vec\psi))(\vec\varphi - \vec\psi)) \le \csnb (\vec\varphi - \vec\psi)$ for all $t \in (0,1)$.\proofs{\#40.2} We first note that for $x \in \Omega$ and $\alpha \in \bN_0^n$, $\pd^\alpha ( \ud \widetilde \GFa (\vec\psi + t (\vec\varphi - \vec\psi)) (\vec\varphi - \vec\psi))(x) = \pd_x^\alpha ( \ud \GFa ( \vec\psi(x) + t ( \vec\varphi(x) - \vec\psi(x))) (\vec\varphi(x) - \vec\psi(x)))$ is given, again by the chain rule, by a linear combination of terms of the form \proofs{\#40.3}
\begin{multline}\label{alphar}
 \ud^{k+1} \GFa \bigl(\vec\psi(x) + t (\vec\varphi(x) - \vec\psi(x))\bigr)\bigl(\pd^{\beta_1} ( \vec\psi + t (\vec\varphi - \vec\psi) )(x), \dotsc, \\ \pd^{\beta_{k}} ( \vec\psi + t (\vec\varphi - \vec\psi) )(x), \pd^{\beta_{k+1}} (\vec\varphi - \vec\psi)(x) \bigr)
\end{multline}
with $k \in \bN_0$ and multiindices $\beta_1,\dotsc,\beta_{k+1} \in \bN_0^n$ satisfying $k \le \abso{\alpha}$ and $\beta_1 + \dotsc + \beta_{k+1} = \alpha$. Because $(t,x) \mapsto \ud^{k+1} \GFa ( \vec\psi(x) + t (\vec\varphi(x) - \vec\psi(x)) )$ is smooth (and hence continuous) from $\bR \times \Omega$ into $\Linbbmult{k+1}(\cE'(\Omega)^{k+1}, \bK)$, it maps $[0,1] \times K$ into a bounded subset of
$\Linbbmult{k+1}( \cE'(\Omega)^{k+1}, \bK)$. By the exponential law for bounded linear mappings \cite[5.2]{KM} this space is bornologically isomorphic to $\Linbbmult{k}( \cE'(\Omega)^{k}, \Linbb(\cE'(\Omega), \bK))$ and because each $\{\ \pd^{\beta_i} ( \vec\psi + t ( \vec\varphi - \vec\psi ) ) (x)\ |\ t \in [0,1],\ x \in K\ \}$ ($i=1,\dotsc,k$) is bounded in $\cE'(\Omega)$ the set
\begin{multline*}
\{\,\ud^{k+1} \GFa (\vec\psi(x) + t ( \vec\varphi(x) - \vec\psi(x))) ( \pd^{\beta_1} ( \vec\psi + t ( \vec\varphi - \vec\psi ) ) (x), \dotsc, \\
\pd^{\beta_{k}} ( \vec\psi + t ( \vec\varphi - \vec\psi ) ) (x) )\ |\ t \in [0,1],\ x \in K\, \}
\end{multline*}
is bounded and hence equicontinuous in $\Linbb(\cE'(\Omega), \bK)$ because $\cE'(\Omega)$ is bornological and barrelled.
Hence, there is a continuous seminorm $\csnc$ of $\cE'(\Omega)$ such that \eqref{alphar} can be estimated by $\csnb ( \vec\varphi - \vec \psi) \coleq \sup_{x \in K, \abso{\alpha} \le m} \csnc (\pd^\alpha ( \vec\varphi - \vec\psi) (x) )$, which is a continuous seminorm of $C^\infty(\Omega, \cE'(\Omega))$.
\end{proof}

Consequently, $\widetilde \GFa(\vec\delta) = 0$ if and only if $\widetilde \GFa(\vec\varphi_\e) \to 0$ in $C^\infty(\Omega)$ for any net $(\vec\varphi_\e)_\e$ in $C^\infty(\Omega, \cE'(\Omega))$ with $\vec\varphi_\e \to \vec\delta$. Although the latter condition makes sense also for $\GFa \in C^\infty(\cD(\Omega))$ by defining $\widetilde \GFa(\vec\varphi)(x) \coleq \GFa(\vec\varphi(x))$ for $\vec\varphi \in \SK \Omega$, the corresponding subspace
\[ \{\, \GFa \in C^\infty(\cD(\Omega))\ |\ \widetilde \GFa(\vec\varphi_\e) \to 0 \text{ for every net }(\vec\varphi_\e)_\e \text{ in }\SK\Omega\text{ with } \vec\varphi_\e\to \vec\delta\,\}\]
is no ideal in $C^\infty(\cD(\Omega))$: taking, e.g., $u \in \cE'(\Omega)$, $K \subseteq \Omega$ compact and $m \in \bN_0$, the expression
\begin{equation}\label{blahgux}
\sup_{x \in K, \abso{\alpha} \le m} \abso{ \pd^\alpha \widetilde{(\iota u)}(\vec\varphi_\e ) (x)} = \sup_{\mathclap{x \in K, \abso{\alpha} \le m}} \abso{\langle u, (\pd^\alpha \vec\varphi_\e)(x) \rangle } \le C \cdot \sup_{\mathclap{\substack{x \in K,y \in \Omega\\\abso{\alpha}\le m, \abso{\beta} \le l}}} \abso{(\pd_x^\alpha \pd_y^\beta \vec\varphi_\e)(x)(y)}
\end{equation}
(where $l$ is the order of $u$) can diverge faster than the convergence of $\widetilde \GFa(\vec\varphi_\e) \to 0$ takes place. However, Lemma \ref{colbasis} shows that for $\GFa \in \mathcal{K}(\Omega)$ we can make $\widetilde \GFa(\vec\varphi_\e) \to 0$ converge rapidly by taking $\vec\varphi_\e \to \vec\delta$ rapidly, so we only need to have a polynomial bound in $1/\e$ of \eqref{blahgux} in order to make
\[ \{\, \GFa \in C^\infty(\cD(\Omega))\ |\ \widetilde \GFa(\vec\varphi_\e) \to 0 \text{ rapidly for }\vec\varphi_\e \to \vec\delta \text{ rapidly}\, \} \]
invariant under multiplication by elements of $\iota(\cD'(\Omega))$. It cannot be stable under multiplication by arbitrary elements $\GFa \in C^\infty(\cD(\Omega))$, therefore we restrict to the space of those $\GFa$ such that $\widetilde \GFa(\vec\varphi_\e)$ grows at most polynomially in $1/\e$, mimicking the behaviour of distributions above.

In order to ensure that nonzero distributions will not lie in the ideal and also because intuitively, $\widetilde{(\iota u)} (\vec\varphi_\e)$ should be seen as an approximation of $u \in \cD'(\Omega)$, we require that $\langle u, \vec\varphi_\e \rangle \to u$ in $\cD'(\Omega)$ (this ensures that the concept of association, which is fundamental to Colombeau algebras in order to obtain coherence with classical analysis, is available). Interpreting each $\vec\varphi$ as a regular $\cD(\Omega)$-valued distribution, i.e., an element of $\Lincb(\cD(\Omega), \cD(\Omega))$, via the vector-valued integral $\langle \vec\varphi, \psi \rangle \coleq \int \vec\varphi(x) \psi(x)\,\ud x \in \cD(\Omega)$ for $\psi \in \cD(\Omega)$, this is easily seen to be equivalent to requiring $\vec\varphi_\e \to \id$ in $\Lincb(\cD(\Omega), \cD(\Omega))$.

These considerations lead to the use of so-called \emph{test objects}, which are nets $(\vec\varphi_\e)_\e$ in $\SK\Omega$ satisfying
\begin{enumerate}[label=(\arabic*)]
 \item \label{first} $\forall \csna \in \csn { C^\infty(\Omega, \cE'(\Omega)) }\ \forall m \in \bN: \csna (\vec\varphi_\e - \vec\delta) = O(\e^m)$;
 \item \label{second} $\vec\varphi_\e \to \id$ in $\Lincb(\cD(\Omega), \cD(\Omega))$;
 \item \label{third} $\forall \csna \in \csn { C^\infty(\Omega, \cD(\Omega))}$ $\exists N \in \bN$: $\csna(\vec\varphi_\e) = O(\e^{-N})$.
\end{enumerate}

From the above discussion it appears that the crucial objects of the quotient construction are not elements of $C^\infty(\cD(\Omega))$ but of $C^\infty( \SK \Omega, C^\infty(\Omega))$. Hence,
we are led to think of generalized functions as 
\emph{mappings from smoothing kernels to smooth functions}, which gives a much greater flexibility which will be of significant use later on. We reach the following definitions:
\begin{definition}
 We set $\Gnb\Omega \coleq C^\infty ( \SK\Omega, C^\infty(\Omega))$, with embeddings $\iota\colon \cD'(\Omega) \to \Gnb\Omega$ and $\sigma \colon C^\infty(\Omega) \to \Gnb\Omega$ defined by $(\iota u)(\vec\varphi)(x) \coleq \langle u, \vec\varphi(x) \rangle$ and $(\sigma f)(\vec\varphi) \coleq f$. Given a diffeomorphism $\mu \colon \Omega \to \Omega'$, its action $\mu_*\colon \Gnb\Omega \to \Gnb{\Omega'}$ is defined as $(\mu_* \GFa)(\vec\varphi) \coleq \mu_* ( \GFa ( \mu^*\vec\varphi)) = \GFa(\mu^*\vec\varphi) \circ \mu^{-1}$,
where $(\mu^* \vec\varphi)(x) \coleq (\vec\varphi(\mu x) \circ \mu) \cdot \abso{\det \D\mu}$ is the natural pullback of smoothing kernels. The Lie derivative of $F \in \Gnb\Omega$ with respect to a vector field $X \in C^\infty(\Omega, \bK^n)$ is defined as $(\Lie_X \GFa) (\vec\varphi) \coleq  - (\ud \GFa) (\vec\varphi) (\Lsk_X \vec\varphi) + \Lie_X ( \GFa ( \vec\varphi))$, where $(\Lsk_X\vec\varphi)(x) = \Lie_X ( \vec\varphi (x)) + (\Lie_X\vec\varphi)(x)$ is the Lie derivative of smoothing kernels.
\end{definition}
Note that this action of diffeomorphisms is the natural one and the Lie derivative $\Lie_X$ is obtained by differentiating the pullback along the flow of $X$.

For the quotient construction, let the space $\TO \Omega$ of \emph{test objects} on $\Omega$ be given by all nets $(\vec\varphi_\e)_\e \in \SK \Omega^{(0,1]}$ having properties (1)--(3) above, and $\TOz \Omega$ its parallel vector subspace obtained by replacing convergence to $\vec\delta$ in \ref{first} and to $\id$ in \ref{second} by convergence to $0$. Note that for $(\vec\varphi_\e)_\e \in \TO\Omega$, $(\Lsk_X\vec\varphi_\e)_\e$ is an element of $\TOz \Omega$. 
\begin{definition}An element $\GFa \in \Gnb\Omega$ is called \emph{moderate} if $\forall \csna \in \csn {C^\infty(\Omega)}$ $\forall k \in \bN_0$ $\exists N \in \bN$ $\forall (\vec\varphi_\e)_\e \in \TO\Omega$, $(\vec\psi_{1,\e})_\e \dotsc (\vec\psi_{k,\e})_\e \in \TOz\Omega$:
\[ \csna \left(\, (\ud^k \GFa) (\vec\varphi_\e)(\vec\psi_{1,\e},\dotsc,\vec\psi_{k,\e})\, \right) = O(\e^{-N}). \]
The set of all moderate elements of $\Gnb\Omega$ is denoted by $\Gnm\Omega$. 
An element $\GFa \in \Gnb\Omega$ is called \emph{negligible} if $\forall \csna \in \csn {C^\infty(\Omega)}$ $\forall k \in \bN_0$ $\forall m \in \bN$ $\forall (\vec\varphi_\e)_\e \in \TO\Omega$, $(\vec\psi_{1,\e})_\e \dotsc (\vec\psi_{k,\e})_\e \in \TOz\Omega$:
\[ \csna \left(\, (\ud^k \GFa) (\vec\varphi_\e)(\vec\psi_{1,\e},\dotsc,\vec\psi_{k,\e})\, \right) = O(\e^m). \]
The set of all negligible elements of $\Gnb\Omega$ is denoted by $\Gnn\Omega$.
We set $\Gen\Omega \coleq \Gnm\Omega / \Gnn\Omega$.
\end{definition}
We recall that $\iota$ and $\sigma$ map into $\Gnm\Omega$ and commute with diffeomorphisms and Lie derivatives, $\iota|_{C^\infty(\Omega)} - \sigma$ maps into $\Gnn\Omega$, $\iota$ is injective into $\Gen\Omega$, and sums, products, diffeomorphisms and Lie derivatives preserve moderateness and negligibility and hence are well-defined on $\Gen\Omega$. Furthermore, we note that in order to obtain a sheaf one has to require, in addition to \ref{first}--\ref{third}, that $\supp \vec\varphi_\e(x)$ converges to $x$ in a certain sense. For further details we refer to \cite{papernew}.

\begin{remark}\label{rema}
\begin{enumerate}[label=(\roman*)]
 \item \label{rema.1} Because of isomorphism \eqref{kthm} one can also formulate this construction in terms of smoothing operators $\Phi \in \SO \Omega$, using the basic space $\Gnb \Omega \coleq C^\infty(\SO \Omega, C^\infty(\Omega))$ and corresponding conditions on nets $(\Phi_\e)_\e \in \SO \Omega^{(0,1]}$. Then, the Colombeau-product of two embedded distributions, $(\iota(u) \cdot \iota(v))(\vec\varphi) \coleq \langle u, \vec\varphi \rangle \cdot \langle v, \vec\varphi \rangle$, turns into $(\iota(u) \cdot \iota(v))(\Phi) \coleq \Phi(u) \cdot \Phi(v)$ for $\Phi \in \SO \Omega$, which is nothing else than taking the usual product of the \emph{regularizations} of $u$ and $v$. This idea will also form the basis of the definition of the tensor product of generalized vector fields.
\item This method of obtaining a diffeomorphism invariant Colombeau algebra is considerably simpler than the previous approach in \cite{found,global}, see Remark \ref{remarkiblah} \ref{remarkiblah.3}.
\end{enumerate}

\end{remark}

\section{The basic space}

\def\GSa{{\notationcolor  R}}
\def\GSb{{\notationcolor S}}
\def\SSa{{\notationcolor r}}
\def\SSb{{\notationcolor s}}
\def\SDa{{\notationcolor \alpha}}
\def\SDb{{\notationcolor \beta}}
\def\GDa{{\notationcolor \alpha}}
\def\GDb{{\notationcolor \beta}}
\def\SMa{{\notationcolor \mu}}
\def\GVa{{\notationcolor X}}
\def\GVb{{\notationcolor Y}}
\def\GFb{{\notationcolor G}}

In this section we will introduce the spaces containing the representatives of generalized sections of vector bundles and define the basic operations on them.

The strength of the functional analytic approach outlined in Section 3 is that it transfers directly to the setting on manifolds and also to vector bundle valued distributions simply by replacing $\SK \Omega \cong \SO \Omega$ by the appropriate space of \emph{vector smoothing operators}. However, one point of fundamental importance has to be made clear: if one accepts the premise that, conceptually, generalized functions are best seen as functions depending on smoothing operators (which provide the embedding of distributions), this means that in the vector valued case, distributions taking values in different vector bundles will have to depend on different smoothing operators -- as long as no additional structure is introduced which relates smoothing operators of different vector bundles.

In a first step, given a vector bundle $E \to M$ we note that distributions $u \in \cD'(M, E)$ act on vector smoothing operators $\Phi \in \VSO{E}$ via the canonical embedding
\begin{align*}
\iota\colon \cD'(M,E) & \to \Lincb(\VSO{E}, \Gamma(E)) \\
u &\mapsto [\Phi \mapsto \Phi(u)]
\end{align*}
which corresponds to the mapping $R \mapsto \widetilde R$ of Section \ref{sec_revscalar} (but with linear maps). Denoting the image of $u$ under this embedding by the same letter, we henceforth write $u(\Phi) \coleq \Phi(u)$.

Following the scheme of the scalar case (see Remark \ref{rema} \ref{rema.1} above), we will extend the usual operations on smooth sections of $E$ (vector space structure, tensor product, permutation, contraction with dual tensor fields, derivatives etc.) to distributional sections $u \in \cD'(M, E)$ \emph{by applying them to the regularizations} $u(\Phi) \in \Gamma(E)$, where $\Phi \in \VSO E$ is a parameter on which the generalized section depends, similar to the role test functions have in distribution theory. This results in a smooth section depending on the same vector smoothing operators as the arguments of the operation. For example, the tensor product of $u \in \cD'(M, E)$ and $v \in \cD'(M, F)$, where $F \to M$ is another vector bundle, shall be given by the bilinear continuous (or, more generally, \emph{smooth}) mapping
\begin{equation}\label{genmult}
\begin{aligned}
 u \otimes v \colon \VSO E \times \VSO F &\to \Gamma(E \otimes F), \\
 (\Phi, \Psi) & \mapsto u(\Phi) \otimes_{C^\infty(M)} v(\Psi).
\end{aligned}
\end{equation}
In the scalar case (i.e., for $M=\Omega \subseteq \bR^n$ and $E = F = \Omega \times \bK$), requiring commutativity of this product for $u,v \in \cD'(\Omega)$ would force the mapping $u \cdot v \colon (\Phi, \Psi) \mapsto u(\Phi) \cdot v(\Psi)$ to be symmetric, which naturally leads to taking polynomials or consequently smooth functions on $\SO\Omega$ as the basic space and in turn gives the construction outlined in Section \ref{sec_revscalar}. In the general case, however, the tensor product has to satisfy the following natural conditions:

\begin{enumerate}[label=(\roman*)]
\item \label{cond2} For $E=F$, the permutation $\GSa \otimes \GSb \to \GSb \otimes \GSa$ of the tensor product \eqref{genmult} should be compatible with the permutation $p\colon \SSa \otimes \SSb \mapsto \SSb \otimes \SSa$ on $\Gamma(E) \otimes_{C^\infty(M)} \Gamma(E)$ in the sense that $p \circ (\GSa \otimes \GSb) = \GSb \otimes \GSa$ as elements of $C^\infty(\VSO E \times \VSO E, \Gamma(E \otimes E))$. This implies that $\GSa(\Phi_1) \otimes \GSb(\Phi_2) = \GSa(\Phi_2) \otimes \GSb(\Phi_1)$ for $\Phi_1,\Phi_2 \in \VSO E$, which forces us to set $\Phi_1 = \Phi_2$. Hence, distributions with values in the same vector bundle have to be regularized by the same vector smoothing operators.
 \item \label{cond3} Contraction should be compatible with permutations in the following sense. Let $\GSa \in C^\infty(\VSO E, \Gamma(E))$, $\GSb \in C^\infty(\VSO F, \Gamma(F))$, $\SDa \in \Gamma(E^*)$ and $\SDb \in \Gamma(F^*)$. Defining $\GSa \cdot \SDa \in C^\infty(\VSO E, C^\infty(M))$ by $(\GSa \cdot \SDa)(\Phi) \coleq \GSa(\Phi) \cdot \SDa$, we require that $(\GSa \otimes \GSb)(\SDa \otimes \SDb) = (\GSb \otimes \GSa)(\SDb \otimes \SDa)$. However, using \eqref{genmult}, $(\GSa \otimes \GSb)(\SDa \otimes \SDb)$ lies in $C^\infty(\VSO E \times \VSO F, C^\infty(M))$ while $(\GSb \otimes \GSa)(\SDb \otimes \SDa)$ is an element of $C^\infty(\VSO F \times \VSO E, C^\infty(M))$. The necessary identification of $\VSO E \times \VSO F$ and $\VSO F \times \VSO E$ will be taken care of by using the proper notation.
\end{enumerate}

From this it follows that $\VSO E$ should appear at most \emph{once} for each vector bundle $E$ as a parameter space of a generalized tensor field. Moreover, one should not impose any specific order on arguments of generalized sections. Formally, this is accomplished by the following definition.

\begin{definition}\label{def_basicspace}
\begin{enumerate}[label=(\roman*)]
 \item Let $E \to M$ be a vector bundle and $\Delta$ a (possibly empty) finite set of vector bundles. Then we define
\begin{align*}
 \VSO \Delta & \coleq \prod_{G \in \Delta} \VSO G, \\
\Gb E\Delta &\coleq C^\infty(\VSO \Delta, \Gamma(E)).
\end{align*}
Elements of the vector space $\Gb E\Delta$ are called \emph{generalized sections of $E$ (with index set $\Delta$}). We call each $\Gb E\Delta$ a \emph{basic space}.
\item If $\Delta$ is the empty set the product becomes the trivial vector space $\{0\}$ and we identify $\Gb E\emptyset = C^\infty(\{0\}, \Gamma(E))$ with $\Gamma(E)$.
\item Distributions are embedded into $\Gb E{\{E\}}$ via $\iota \colon \cD'(M, E) \to \Gb E{\{ E \}}$, $(\iota u)(\Phi) \coleq \Phi (u)$ for $u \in \cD'(M,E)$ and $\Phi \in \VSO E$.
\item For any open subset $U \subseteq M$, the space of generalized sections of $E$ over $U$ is defined to be $\Gb{U, E}\Delta \coleq \Gb{E|_U}{(G|_U)_{G \in \Delta}}$. Furthermore, we write $\Gb M\Delta \coleq \Gb{M \times \bK}\Delta$. Elements of $\Gb M\Delta$ are called \emph{generalized scalar functions (with index set $\Delta$)}.
\end{enumerate}
\end{definition}

For shorter notation we will frequently write $\Phi \in \VSO \Delta$ instead of $(\Phi_G)_G \in \VSO \Delta$. $\Gb E\Delta$ is a $C^\infty(M)$-module with multiplication $(f \cdot R)(\Phi) \coleq f \cdot R(\Phi)$ for $f \in C^\infty(M)$, $R \in \Gb E\Delta$ and $\Phi \in \VSO \Delta$.
Because for $\Delta_1 \subseteq \Delta_2$ we have $\Gb E{\Delta_1} \subseteq \Gb E{\Delta_2}$, we can (and usually will) assume that all generalized sections we are dealing with have the same index set $\Delta$. We will consider $\iota$ from above as a map into $\Gb E{\{E\}}$.

\begin{definition}\label{basop}

Given $\GSa \in \Gb E\Delta$ and $\GSb \in \Gb F\Delta$ we define their tensor product $\GSa \otimes \GSb$ as the element of $\Gb E\Delta$ given by
\begin{equation}\label{genmultdef}
(\GSa \otimes \GSb) ( \Phi ) \coleq \GSa ( \Phi ) \otimes_{C^\infty(M)} \GSb ( \Phi ) \qquad (\Phi \in \VSO \Delta).
\end{equation}
Note that $(\GSa, \GSb) \mapsto \GSa \otimes \GSb$ is $C^\infty(M)$-bilinear.
\end{definition}

\begin{remark}
\begin{enumerate}[label=(\roman*)]
 \item These definitions comply with conditions \ref{cond2} and \ref{cond3} above.
\item For $E = F = M \times \bK$ this tensor product turns $\Gb M\Delta$ into an algebra.
\item The most striking feature of the general vector valued case is the fact that, compared to the scalar case, there is not only one but many basic spaces even for scalar generalized functions: contracting $\GSa \in C^\infty(\VSO E, \Gamma(E))$ with a dual tensor field $\SDa \in \Gamma(E^*)$ gives an element of $C^\infty(\VSO E, C^\infty(M))$, i.e., for any vector bundle $E$ we have scalar generalized functions in $\Gb M{\{E\}}$. Although at this level of generality there is no relation between these basic spaces, such may be obtained by imposing further structure as seen for example in \cite{global2}, where only tensor bundles are considered; see also Section \ref{to_classes}.
\end{enumerate}
\end{remark}

The next crucial result states that generalized sections in $\Gb E\Delta$ can be viewed as sections in $\Gamma(E)$ with coefficients in $\Gb M\Delta$, a result which holds analogously for distributions (\cite[Theorem 3.1.12]{GKOS}) and in $\hat\cG^r_s$ (\cite[Theorem 8.19]{global2}).

\begin{theorem}\label{reprmod}
$\Gb E\Delta \cong \Gb M\Delta \opotimes_{C^\infty(M)} \Gamma(E) \cong \Hom_{C^\infty(M)}(\Gamma(E^*), \Gb M\Delta)$ as $C^\infty(M)$-modules.
\end{theorem}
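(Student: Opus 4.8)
The plan is to prove the two isomorphisms separately: the first is analytic (it concerns the convenient smooth maps out of $\VSO\Delta$), the second is a purely algebraic fact about finitely generated projective modules. Both rest on the observation, underlying also the cited analogues \cite{GKOS,global2}, that $\Gamma(E)$ is a finitely generated projective $C^\infty(M)$-module (Serre--Swan). Since $M$ is finite-dimensional and paracompact, $E$ is a direct summand of a trivial bundle: there are a vector bundle $E' \to M$ and $N \in \bN$ with $E \oplus E' \cong M \times \bK^N$, so that $\Gamma(E) \oplus \Gamma(E') \cong C^\infty(M)^N$ as $C^\infty(M)$-modules. Crucially this splitting is \emph{topological}, since the corresponding projection and injection are vector bundle morphisms and hence continuous $C^\infty(M)$-linear maps on section spaces.

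For the first isomorphism I would introduce the natural $C^\infty(M)$-linear map
\[
 \Theta_E \colon \Gb M\Delta \otimes_{C^\infty(M)} \Gamma(E) \to \Gb E\Delta, \qquad (T \otimes s)(\Phi) \coleq T(\Phi) \cdot s,
\]
which is well defined because $T(\Phi) \in C^\infty(M)$ and the assignment is $C^\infty(M)$-balanced. The key structural point is that $\Theta$ is natural and additive: the continuous linear maps coming from the Serre--Swan splitting induce, by post-composition, maps between the spaces $\Gb{-}\Delta$, and both sides of $\Theta$ turn finite direct sums of bundles into direct sums of modules. Here I use two features of the convenient setting \cite{KM}: that $C^\infty(\VSO\Delta, -)$ commutes with finite products of the target (so $\Gb{E\oplus E'}\Delta \cong \Gb E\Delta \oplus \Gb{E'}\Delta$), and that it preserves complemented subspaces (a smooth map into $C^\infty(M)^N$ whose image lies in the closed complemented subspace $\Gamma(E)$ is already smooth into $\Gamma(E)$). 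It then suffices to check $\Theta$ on $M \times \bK^N$, where $\Gb{M\times\bK^N}\Delta \cong \big(\Gb M\Delta\big)^N$ and $\Gb M\Delta \otimes_{C^\infty(M)} C^\infty(M)^N \cong \big(\Gb M\Delta\big)^N$, with $\Theta_{M\times\bK^N}$ exactly the matching identification. Applying naturality and additivity to the splitting yields $\Theta_{E\oplus E'} = \Theta_E \oplus \Theta_{E'}$; since the left-hand side is an isomorphism, so is each summand, in particular $\Theta_E$.

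The second isomorphism is the standard module-theoretic identity for finitely generated projective modules \cite{Adkins,Bourbaki}. Using the canonical identifications $\Hom_{C^\infty(M)}(\Gamma(E),C^\infty(M)) \cong \Gamma(E^*)$ and $\Gamma(E^*)^* \cong \Gamma(E)$ (reflexivity of the finitely generated projective module $\Gamma(E^*)$ together with $E^{**} \cong E$), the natural contraction map
\[
 \Gb M\Delta \otimes_{C^\infty(M)} \Gamma(E) \to \Hom_{C^\infty(M)}(\Gamma(E^*), \Gb M\Delta), \qquad T \otimes s \mapsto [\alpha \mapsto \langle \alpha, s \rangle \cdot T],
\]
is an isomorphism precisely because $\Gamma(E^*)$ is finitely generated projective; as in the first step this is verified on a free module and transported along the projective splitting. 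Composing the two isomorphisms gives the claim, and the composite is manifestly $C^\infty(M)$-linear.

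I expect the main obstacle to lie not in the algebra but in the analytic bookkeeping of the first step: one must confirm that the Serre--Swan decomposition is genuinely a topological (complemented) splitting of locally convex spaces and that the functor $C^\infty(\VSO\Delta, -)$ respects this decomposition in the convenient sense, so that the clean additive-functor argument applies verbatim. Once these continuity and complementation facts are in place, everything else follows formally.
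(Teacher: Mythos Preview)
Your proposal is correct and follows essentially the same route as the paper: reduce to the trivial line bundle via the Serre--Swan splitting (projectivity of $\Gamma(E)$), where both isomorphisms are tautological, and use naturality/additivity to transport back. The paper's proof is a single sentence to this effect; your version simply spells out the analytic bookkeeping (that $C^\infty(\VSO\Delta,-)$ respects the topological direct-sum decomposition) which the paper leaves implicit.
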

\begin{proof}
Using the fact that $\Gamma(E)$ is a projective $C^\infty(M)$-module the claim is easily reduced to the case $E = M \times \bK$ for which it holds trivially; see also \cite[Theorem 8.19]{global2} for a different proof which also applies here.
\end{proof}

Denoting the image of $\GSa \in \Gb E\Delta$ in $\Hom_{C^\infty(M)} ( \Gamma(E^*), \Gb M\Delta)$ also by $\GSa$, this isomorphism explicitly reads $(\GSa \cdot \SDa)(\Phi) = \GSa(\Phi) \cdot \SDa$ for $\Phi \in \VSO \Delta$ and $\SDa \in \Gamma(E^*)$. If $E$ is trivial and $(b_i)_{i=1\dotsc m}$ is a basis of $\Gamma(E)$ with dual basis $( \beta^i )_{i=1 \dotsc m}$ of $\Gamma(E^*)$ then $\GSa$ can be written as $\GSa = \sum_{i=1}^m \GSa^i b_i$ with coordinates $\GSa^i = \GSa \cdot \beta^i \in \Gb M\Delta$.

The following algebraic consequences of Theorem \ref{reprmod} follow immediately from standard module theory (\cite[Chapter II \S 5]{Bourbaki}):
\begin{corollary}\label{algprop}Let $E,F$ be vector bundles over $M$. Then
 \begin{enumerate}[label=(\roman*)]
  \item \label{algprop.1} $\Gb E\Delta$ is an $\Gb M\Delta$-module,
  \item if $\Gamma(E)$ has basis $(b_i)_i$, $\Gb E\Delta$ has basis $(1 \otimes b_i)_i$,
  \item $\Gb E\Delta$ is a projective $\Gb M\Delta$-module.
\end{enumerate}
Furthermore, the following $\Gb M\Delta$-module isomorphisms hold:
\begin{enumerate}[resume,label=(\roman*)]
  \item \label{algprop.4} $\Gb E\Delta \otimes_{\Gb M\Delta} \Gb F\Delta \cong \Gb {E \otimes F}\Delta$,
  \item \label{algprop.5}$\Hom_{C^\infty(M)} ( \Gamma(E), \Gamma(F) ) \otimes_{C^\infty(M)} \Gb M\Delta \cong \Hom_{\Gb M\Delta} ( \Gb E\Delta, \Gb F\Delta)$,
  \item \label{algprop.6}$\Hom_{C^\infty(M)} ( \Gamma(E), \Gb M\Delta) \cong \Hom_{\Gb M\Delta}(\Gb E\Delta, \Gb M\Delta)$.
 \end{enumerate}
 \end{corollary}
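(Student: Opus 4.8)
The plan is to reduce all six assertions to standard facts about extension of scalars, using Theorem \ref{reprmod} as the single analytic input. Write $A \coleq C^\infty(M)$ and $B \coleq \Gb M\Delta$, and recall that $B$ is a commutative $A$-algebra (the algebra structure being the tensor product of Definition \ref{basop} in the scalar case, with $A$ embedded as the constant generalized functions). The two facts I would isolate at the outset are: (a) Theorem \ref{reprmod} gives $\Gb E\Delta \cong B \otimes_A \Gamma(E)$, i.e.\ $\Gb E\Delta$ is the base change of $\Gamma(E)$ along $A \to B$; and (b) by the smooth Serre--Swan theorem, $\Gamma(E)$ is a finitely generated projective $A$-module for every vector bundle $E \to M$. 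Each item then reads off from how extension of scalars interacts with the relevant module-theoretic construction, in the spirit of \cite[Chapter II \S 5]{Bourbaki}.

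Before exploiting (a), I would verify that the $C^\infty(M)$-linear isomorphism of Theorem \ref{reprmod} is in fact $B$-linear once $\Gb E\Delta$ carries its natural pointwise action $(S \cdot R)(\Phi) \coleq S(\Phi) \cdot R(\Phi)$; this is the content of (i) and is a one-line check, most transparently on a local trivialization where $R = \sum_i R^i b_i$ corresponds to $\sum_i R^i \otimes b_i$ and the pointwise action matches multiplication in the left tensor factor. With (a) upgraded to a $B$-module isomorphism, items (ii) and (iii) are immediate: extension of scalars sends a free $A$-module with basis $(b_i)_i$ to a free $B$-module with basis $(1 \otimes b_i)_i$, giving (ii); and it sends projectives to projectives (a direct summand of a free module maps to a direct summand of a free module), which together with (b) yields (iii).

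For the isomorphisms (iv)--(vi) I would again transport through (a) and apply standard identities. For (iv), the computation $(B \otimes_A \Gamma(E)) \otimes_B (B \otimes_A \Gamma(F)) \cong B \otimes_A (\Gamma(E) \otimes_A \Gamma(F))$ combined with the usual identification $\Gamma(E) \otimes_A \Gamma(F) \cong \Gamma(E \otimes F)$ of sections of a tensor bundle gives the claim. For (vi), the tensor--hom adjunction between extension and restriction of scalars, $\Hom_B(B \otimes_A \Gamma(E), B) \cong \Hom_A(\Gamma(E), B)$, is exactly the assertion, and needs no projectivity. The one item that genuinely uses (b) is (v): here I would consider the canonical map $\Hom_A(\Gamma(E), \Gamma(F)) \otimes_A B \to \Hom_B(B \otimes_A \Gamma(E), B \otimes_A \Gamma(F))$ and invoke the standard result that it is an isomorphism precisely because the source module $\Gamma(E)$ is finitely generated projective.

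I expect the only real point requiring care --- the main obstacle --- to be twofold and bookkeeping in nature: first, confirming that Theorem \ref{reprmod}'s isomorphism respects the $B$-module structures, so that (iv)--(vi) are genuinely isomorphisms of $\Gb M\Delta$-modules and not merely of $C^\infty(M)$-modules; and second, pinning down finite generation of $\Gamma(E)$ (not just projectivity) so that the natural map in (v) is invertible, since for a non-finitely-generated free module $\Hom_A(A^{(I)},-)\otimes_A B$ fails to commute past the relevant product. Everything else is formal manipulation of base change.
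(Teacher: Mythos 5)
Your proposal is correct and matches the paper's argument, which simply invokes standard module theory (extension of scalars, \cite[Chapter II \S 5]{Bourbaki}) applied to the base change $C^\infty(M) \to \Gb M\Delta$ furnished by Theorem \ref{reprmod}; you have merely spelled out the details the paper leaves implicit. Your two points of care --- $\Gb M\Delta$-linearity of the isomorphism of Theorem \ref{reprmod} and finite generation plus projectivity of $\Gamma(E)$ for item \ref{algprop.5} --- are exactly the hypotheses the cited standard results require, so nothing is missing.
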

These properties are very useful in practice because they transfer classical isomorphisms to the generalized setting. Their explicit form is as follows. \ref{algprop.1}: for $\GSa \in \Gb E\Delta$ and $\GFa \in \Gb M\Delta$, $\GSa \cdot \GFa \in \Gb E\Delta$ is given by $(\GSa \cdot \GFa)(\Phi) = \GSa(\Phi) \cdot \GFa(\Phi)$ for $\Phi \in \VSO\Delta$, which is consistent with the product given by Definition \ref{basop}. Moreover, the $C^\infty(M)$-module structure on $\Gb E\Delta$ of Definition \ref{basop} is exactly the one obtained by restricting the ring of scalars of $\Gb E\Delta$ from $\Gb M\Delta$ to $C^\infty(M)$ via the embedding $C^\infty(M) = \Gb M\emptyset \subseteq \Gb M\Delta$. Similarly, $C^\infty(M)$ is a subalgebra of $\Gb M\Delta$. \ref{algprop.5}: for $h \in \Hom_{C^\infty(M)} ( \Gamma(E), \Gamma(F))$ and $\GFa \in \Gb M \Delta$, $h \otimes \GFa$ corresponds to the map $H \in \Hom_{\Gb M\Delta} ( \Gb E \Delta, \Gb F\Delta)$ defined by $H(\GSa)(\Phi) \coleq h(\GSa(\Phi)) \cdot \GFa(\Phi)$ for $\GSa \in \Gb E\Delta$ and $\Phi \in \VSO \Delta$. \ref{algprop.6}: for $h \in \Hom_{C^\infty(M)} ( \Gamma(E), \Gb M \Delta)$ and $\Phi \in \VSO \Delta$, the element $H \in \Hom_{\Gb M \Delta} ( \Gb E \Delta, \Gb F \Delta)$ corresponding to $h$ is given by $H(\GSa)(\Phi) \coleq h(\GSa(\Phi))(\Phi)$ for $\GSa \in \Gb E \Delta$ and $\Phi \in \VSO \Delta$.

Furthermore, we define the contraction of $\GSa \in \Gb E\Delta$ and $\GSb \in \Gb {E^*}\Delta$ in the obvious way by $(\GSa \cdot \GSb) \coleq \GSa(\Phi) \cdot \GSb(\Phi)$.

We will now consider the \emph{mixed tensor algebra}
\[
 \Gtb E\Delta \coleq \bigoplus_{r,s \ge 0} \Gb{E^r_s}\Delta
\]
where $E^r_s = E \otimes \dotsc E \otimes E^* \otimes \dotsc \otimes E^*$ with $r$ copies of $E$ and $s$ copies of $E^*$, and $E^0_0 = M \times \bK$. 

By Corollary \ref{algprop} \ref{algprop.5} it follows that if $r,s \ge 1$ then for all $(i,j)$ with $1 \le i \le r$, $1 \le j \le s$ there is a unique $\Gb M\Delta$-linear mapping $C^i_j \colon \Gb{E^r_s}\Delta \to \Gb{E^{r-1}_{s-1}}\Delta$, called \emph{$(i,j)$-contraction}, such that $C^i_j (\GSa_1 \otimes \dotsc \otimes \GSa_r \otimes \GDa^1 \otimes \dotsc \otimes \GDa^s)$ is given by
\[  (\GSa_i \cdot \GDa^j) \cdot \GSa_1 \otimes \dotsc \otimes \GSa_{i-1} \otimes \GSa_{i+1} \otimes \dotsc \otimes \GSa_r \otimes \GDa^1 \otimes \dotsc \otimes \GDa^{j-1} \otimes \GDa^{j+1} \otimes \dotsc \otimes \GDa^s  \]
for all $\GSa_1, \dotsc \GSa_r \in \Gb E\Delta$ and $\GDa^1,\dotsc,\GDa^s \in \Gb{E^*}\Delta$. In fact, this mapping is simply given by componentwise contraction of smooth tensor fields, i.e., we have $C^i_j(\GSa)(\Phi) = C^i_j(\GSa(\Phi))$ for $\GSa \in \Gb{E^r_s}\Delta$ if we denote by $C^i_j$ also the classical contraction.

A \emph{derivation} on $\Gtb E\Delta$ is a family of $\bK$-linear functions
\[ \cD = \cD^r_s \colon \Gb{E^r_s}\Delta \to \Gb{E^r_s}\Delta \qquad (r,s \ge 0) \]
such that $\cD ( \GSa \otimes \GSb ) = \cD \GSa \otimes \GSb + \GSa \otimes \cD \GSb$ and $\cD ( C^i_j \GSa ) = C^i_j ( \cD \GSa )$, i.e., $\cD$ satisfies the Leibniz rule and commutes with all contractions. Such a derivation is uniquely determined by its values on $\Gb M\Delta$ and $\Gb E\Delta$.

\subsection{Functoriality}

It was an open question for a long time whether a functorial construction of Colombeau's algebra of generalized functions is possible in the sense that any diffeomorphism $\Omega \to \Omega'$ between open subsets of $\bR^n$ (or, more generally, manifolds) induces a corresponding map between the respective Colombeau algebras. A complete answer was given for the first time in \cite{found}, based on previous work of several authors (\cite{colmani, Jelinek}). This, in turn, led to the construction of an intrinsic variant of Colombeau algebras on manifolds in \cite{global}. However, this construction was technically very involved (cf.~\cite[Section 2.1]{GKOS}).

On the level of the basic space this question is solved easily; in our setting the definition of vector smoothing operators and the basic space are functorial. However, our approach is distinguished by the fact that also the spaces of test objects (see Section \ref{sec_testobjects}) will be functorial by definition; this is a stark contrast to the situation of \cite{found}, where diffeomorphism invariance was only achieved by way of complicated modifications of the respective spaces of test objects.

Vector bundle isomorphisms act naturally on vector smoothing operators and the basic space as follows:

\begin{definition}\label{def_vbiso}
Let $\mu\colon E \to F$ be a vector bundle isomorphism over a diffeomorphism $f\colon M \to N$. We define the push-forward of $\Phi \in \VSO E$ along $\mu$ as the element $\mu_* \Phi \in \VSO F$ given by $(\mu_* \Phi)(u) \coleq \mu_* ( \Phi ( \mu^* u))$ for $u \in \cD'(N, F)$.

Let $\GSa \in \Gb E\Delta$ and $\mu = \{\mu_G\colon G \to G'\}_{G \in \Delta \cup \{E\}}$ a family of vector bundle isomorphisms over the same diffeomorphism $\underline{\mu}\colon M \to N$. Then $\mu_* \GSa$ is defined as the element of $\Gb {E'}\Delta$ with $\Delta' \coleq \{ G' \}_{G \in \Delta}$ given by
\[ (\mu_* \GSa) ( \Phi' ) \coleq (\mu_E)_* ( \GSa ( \mu^*\Phi' ) ) \]
where $\mu^* \Phi' \coleq (\mu^*_G \Phi'_{G'})_{G} \in \VSO \Delta$ for $\Phi' \in \VSO {\Delta'}$.
\end{definition}
We have $(\mu \circ \nu)_* = \mu_* \circ \nu_*$ and $\id_* = \id$ both for vector smoothing operators and for generalized sections.

\begin{proposition}The action of vector bundle isomorphisms of Definition \ref{def_vbiso} extends the classical one on distributional sections via $\iota$, i.e., $\mu_* \circ \iota = \iota \circ \mu_*$, and trivially the one of smooth sections.
\end{proposition}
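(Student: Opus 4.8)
The plan is to prove the identity $\mu_* \circ \iota = \iota \circ \mu_*$ by unwinding the definitions and evaluating both sides on an arbitrary vector smoothing operator. Since elements of a basic space are, by definition, functions on the relevant product of $\VSO{\cdot}$, two such elements coincide as soon as they agree on every argument. Writing $F \coleq E'$ for the target of the single relevant isomorphism $\mu_E$ and taking the index set $\Delta = \{E\}$ (so $\Delta' = \{F\}$), it therefore suffices to show that $(\mu_*(\iota u))(\Phi') = (\iota(\mu_* u))(\Phi')$ for all $u \in \cD'(M,E)$ and all $\Phi' \in \VSO{F}$.

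First I would expand the left-hand side. By Definition \ref{def_vbiso} together with the definition of $\iota$,
\[
(\mu_*(\iota u))(\Phi') = (\mu_E)_*\bigl((\iota u)(\mu^*\Phi')\bigr) = (\mu_E)_*\bigl((\mu^*\Phi')(u)\bigr),
\]
where $\mu^*\Phi' \in \VSO{E}$ is the pullback of the (single-component) smoothing operator. The key observation is that, for an isomorphism, the pullback of smoothing operators is inverse to the push-forward, i.e.\ $\mu^*\Phi' = (\mu_E^{-1})_*\Phi'$. Applying the defining formula for the push-forward of vector smoothing operators with $\mu_E^{-1}\colon F \to E$ in place of $\mu$, and using $(\mu_E^{-1})^* = (\mu_E)_*$ on distributions (which is just the convention $f_* = (f^{-1})^*$ with $f = \mu_E$), one gets $(\mu^*\Phi')(u) = (\mu_E^{-1})_*\bigl(\Phi'((\mu_E)_* u)\bigr)$.

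Substituting this back, the two transports along $\mu_E$ and $\mu_E^{-1}$ cancel at the level of sections: by the functoriality relations $(\mu \circ \nu)_* = \mu_* \circ \nu_*$ and $\id_* = \id$ recorded after Definition \ref{def_vbiso}, one has $(\mu_E)_* \circ (\mu_E^{-1})_* = \id$ on $\Gamma(F)$, whence $(\mu_*(\iota u))(\Phi') = \Phi'((\mu_E)_* u)$. On the other hand, the right-hand side is $(\iota(\mu_* u))(\Phi') = \Phi'(\mu_* u) = \Phi'((\mu_E)_* u)$ directly from the definition of $\iota$, since the push-forward of the distribution $u$ along the family $\mu$ reduces to the classical push-forward along the single isomorphism $\mu_E$. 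Comparing the two expressions gives the claim. The assertion for smooth sections is immediate: for $\Delta = \emptyset$ one has $\VSO{\emptyset} = \{0\}$ and $\Gb{E}{\emptyset} = \Gamma(E)$, so the definition collapses to $(\mu_* s)(0) = (\mu_E)_*(s)$, the classical push-forward of sections.

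I expect the only genuine obstacle to be the bookkeeping of the pullback/push-forward conventions, namely keeping track of which $\mu^*$ and $\mu_*$ act on distributions, on sections, and on smoothing operators, and confirming that these are mutually inverse so that the double transport cancels. Once the identification $\mu^* = (\mu^{-1})_*$ for smoothing operators is in place, the remainder is a purely formal cancellation carrying no analytic content.
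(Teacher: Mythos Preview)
Your proof is correct and follows essentially the same approach as the paper: both unwind the definitions of $\mu_*$ on the basic space, of $\iota$, and of the pullback of smoothing operators, and then cancel $\mu_*\circ\mu^*$ on sections to arrive at $\Phi(\mu_* u)$. The paper's proof is just the one-line computation $(\mu_*(\iota u))(\Phi) = \mu_*((\mu^*\Phi)(u)) = \mu_*(\mu^*(\Phi(\mu_* u))) = \iota(\mu_* u)(\Phi)$, while you spell out the intermediate identification $\mu^*\Phi' = (\mu_E^{-1})_*\Phi'$ and the convention $(\mu_E^{-1})^* = (\mu_E)_*$ more explicitly.
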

\begin{proof}
 Let $u \in \cD'(M,E)$ and $\mu\colon E \to F$ a vector bundle isomorphism. Then for $\Phi \in \VSO F$, $(\mu_*(\iota u))(\Phi) = \mu_* ((\iota u)(\mu^*\Phi)) = \mu_*((\mu^*\Phi)(u)) = \mu_* ( \mu^*(\Phi(\mu_* u))) = \iota(\mu_* u)(\Phi)$.
\end{proof}

\subsection{Lie derivatives}

We assume all vector bundles to be natural from now on, which means that each $E \to M$ is given by $E = \cF(M)$ for a vector bundle functor $\cF$, as for such vector bundles the Lie derivative of a section along a vector field $X$ exists (\cite[6.15]{KMS}). A vector bundle functor $\cF$ is a functor which assigns to each manifold $M$ of fixed dimension $n$ a vector bundle $\cF(M) \to M$ and to each smooth map $\SMa \colon M \to N$, where $N$ is another manifold, a vector bundle homomorphism $\cF(\SMa) \colon \cF(M) \to \cF(N)$ over $\SMa$ which is a linear isomorphism on each fiber. The Lie derivative of $s \in \Gamma(\cF(M))$ with respect to $X \in \fX(M)$ then is defined as $\Lie_X s \coleq \frac{\ud}{\ud t}|_{t=0} (\Fl^X_t)^*s$, where $(\Fl^X_t)^*s \coleq \cF(\Fl^X_{-t}) \circ s \circ \Fl^X_t$.

The usual way to define the Lie derivative of Colombeau generalized functions is by differentiating the pullback along the flow of a complete vector field $X$ with respect to the time parameter (see e.g.~\cite[Definition 3.8]{global}). For this we first calculate the Lie derivative of $\Phi \in \VSO E$ as 
\[ \left.\frac{\ud}{\ud t}\right|_{t=0} ( \Fl^X_t)^* \Phi = \Lie_X \circ \Phi - \Phi \circ \Lie_X \]
and define $\Lso_X \Phi \coleq \Lie_X \circ \Phi - \Phi \circ \Lie_X$ for all $X \in \fX(M)$. Note that $\Lso_X \in \Linc(\VSO E, \VSO E)$. For $\Phi = (\Phi_G)_G \in \VSO \Delta$ we will write $\Lso_X \Phi$ in place of $(\Lso_X \Phi_G)_G$.
Accordingly, the Lie derivative of $\GSa \in \Gb E\Delta$ for complete $X$ is calculated as
\begin{align}
 \left( \left. \frac{\ud}{\ud t}\right|_{t=0} (\Fl^X_t)^* \GSa\right)(\Phi) &= \left.\frac{\ud}{\ud t}\right|_{t=0} (( \Fl^X_t)^*\GSa)(\Phi) \nonumber \\
&= \left.\frac{\ud}{\ud t}\right|_{t=0} (\Fl^X_t)^*(\GSa((\Fl^X_{-t})^*\Phi)) \nonumber \\
&= \left.\frac{\ud}{\ud t}\right|_{t=0} (\Fl^X_t)^* ( \GSa(\Phi)) + \left.\frac{\ud}{\ud t}\right|_{t=0} \GSa((\Fl^X_{-t})^*\Phi) \nonumber \\
&= \Lie_X(\GSa(\Phi)) - \ud \GSa(\Phi)(\Lso_X \Phi)\label{lieder}.
\end{align}
We adopt formula \eqref{lieder} for the definition of the Lie derivative $\Lieh_X$ on $\Gb E\Delta$ for arbitrary $X \in \fX(M)$. However, this is not the only way to define a Lie derivative: similarly to the definitions of the elementary operations on the basic space one can define a Lie derivative $\Liet_X$ `after regularization', i.e., for fixed $\Phi$. Incidentally, this is exactly the one which is used in special Colombeau algebras (cf.~\cite[Definition 3.2.2]{GKOS}). Hence, we have the following two Lie derivatives on $\Gb E\Delta$:
\begin{definition}\label{deflieder}Let $\GSa \in \Gb E\Delta$, $X \in \fX(M)$ and $\Phi \in \VSO \Delta$. Then we define $\Lieh_X \GSa$ and $\Liet_X \GSa$ as the elements of $\Gb E\Delta$ given by
\begin{align*}
(\Lieh_X \GSa)(\Phi) & \coleq \Lie_X ( \GSa (\Phi)) - \ud \GSa(\Phi)(\Lso_X \Phi), \\
(\Liet_X \GSa)(\Phi) & \coleq \Lie_X ( \GSa (\Phi)).
\end{align*}
\end{definition}
These derivatives will be shown to agree on embedded distributions on the level of association in Section \ref{sec_association}. Because for fixed $\GSa \in \Gb E \Delta$ the map $X \mapsto \Liet_X \GSa$ is in $\Hom_{C^\infty(M)} ( \fX(M), \Gb E\Delta) \cong \Hom_{\Gb M \Delta}(\Gb{TM}\Delta, \Gb E \Delta)$, $\Liet_X$ has a natural extension to the case where $X$ is a generalized vector field as follows:

\begin{definition}
For $\GSa \in \Gb E\Delta$ and $X \in \Gb{TM}\Delta$ we define $\Liet_X \GSa \in \Gb E\Delta$ by 
\[ (\Liet_X \GSa)(\Phi) \coleq \Liet_{X(\Phi)} \GSa(\Phi) \qquad (\Phi \in \VSO \Delta). \]
\end{definition}

\begin{proposition}\label{prop_derivprop}The Lie derivatives $\Lieh_X$ and $\Liet_X$ have the following properties:
\begin{enumerate}[label={(\roman*)}]
\item \label{prop_derivprop.1}Let $E \to M$ and $F \to M$ be vector bundles, $\GSa \in \Gb E\Delta$ and $\GSb \in \Gb F\Delta$. Then
\begin{align*}
 \Lieh_X (\GSa \otimes \GSb) &= \Lieh_X \GSa \otimes \GSb + \GSa \otimes \Lieh_X \GSb \qquad (X \in \fX(M)), \\
 \Liet_X (\GSa \otimes \GSb) &= \Liet_X \GSa \otimes \GSb + \GSa \otimes \Liet_X \GSb \qquad (X \in \Gb {TM}\Delta).
\end{align*}
\item \label{prop_derivprop.2}$\Lieh$ and $\Liet$ are $\bK$-bilinear as maps $\fX(M) \times \Gb E\Delta \to \Gb E\Delta$. For $\GFa \in \Gb M\Delta$, $\Liet_X \GFa$ is $\Gb M\Delta$-linear in $X$.
\item\label{5.3} $\Lieh_X$ commutes with $\iota$, i.e., $\Lieh_X \circ \iota = \iota \circ \Lie_X$.
\item\label{5.4} On $\Gamma(E) = \Gb E\emptyset$, $\Lieh_X$ and $\Liet_X$ coincide with the classical Lie derivative $\Lie_X$ of smooth sections.
\end{enumerate}
\end{proposition}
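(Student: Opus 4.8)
The plan is to prove all four assertions pointwise, i.e.\ by evaluating both sides on an arbitrary $\Phi \in \VSO\Delta$ and reducing everything to classical facts about the Lie derivative of smooth sections, supplemented by the product and chain rules of the convenient calculus \cite{KM}. Throughout I would use the defining formulas of Definition \ref{deflieder} for $\Lieh_X$ and $\Liet_X$, together with the observation that $\Lso_X \in \Linc(\VSO E, \VSO E)$ acts componentwise on $\VSO\Delta$.

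For the Leibniz rule \ref{prop_derivprop.1} with $\Lieh_X$ I would write $(\GSa \otimes \GSb)(\Phi) = \GSa(\Phi) \otimes_{C^\infty(M)} \GSb(\Phi)$ as the composite of the smooth map $\Phi \mapsto (\GSa(\Phi), \GSb(\Phi))$ with the bounded bilinear tensor product $\otimes_{C^\infty(M)}\colon \Gamma(E) \times \Gamma(F) \to \Gamma(E\otimes F)$. The classical Leibniz rule for $\Lie_X$ on smooth sections splits $\Lie_X((\GSa\otimes\GSb)(\Phi))$ into two terms, while the product rule for $\ud$ applied to the bounded bilinear map splits $\ud(\GSa\otimes\GSb)(\Phi)(\Lso_X\Phi)$ as $\ud\GSa(\Phi)(\Lso_X\Phi)\otimes\GSb(\Phi) + \GSa(\Phi)\otimes\ud\GSb(\Phi)(\Lso_X\Phi)$. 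Regrouping the four resulting terms into the brackets $\Lie_X(\GSa(\Phi)) - \ud\GSa(\Phi)(\Lso_X\Phi)$ and its analogue for $\GSb$ yields precisely $(\Lieh_X\GSa \otimes \GSb + \GSa \otimes \Lieh_X\GSb)(\Phi)$. For $\Liet_X$ with a generalized vector field $X \in \Gb{TM}\Delta$ the argument is shorter, since at fixed $\Phi$ one has $(\Liet_X(\GSa\otimes\GSb))(\Phi) = \Lie_{X(\Phi)}(\GSa(\Phi)\otimes\GSb(\Phi))$ and the classical Leibniz rule for $\Lie_{X(\Phi)}$ applies directly. I expect this product-rule bookkeeping, and in particular the verification that boundedness of the tensor product map justifies invoking the convenient-calculus product rule, to be the only step requiring genuine care.

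Assertion \ref{prop_derivprop.2} is then almost immediate: $\bK$-bilinearity in $(X,\GSa)$ follows because $\GSa \mapsto \Lie_X(\GSa(\Phi))$ and $\GSa \mapsto \ud\GSa(\Phi)(\Lso_X\Phi)$ are linear in $\GSa$, and because both $X \mapsto \Lie_X$ (classically) and $X \mapsto \Lso_X$ are linear. For the $\Gb M\Delta$-linearity of $X \mapsto \Liet_X\GFa$ on scalars, I would evaluate at $\Phi$ and use that the classical Lie derivative of a smooth function is $C^\infty(M)$-linear in the vector field, so that for $f \in \Gb M\Delta$ one gets $(\Liet_{fX}\GFa)(\Phi) = \Lie_{f(\Phi)X(\Phi)}(\GFa(\Phi)) = f(\Phi)\,\Lie_{X(\Phi)}(\GFa(\Phi)) = (f \cdot \Liet_X\GFa)(\Phi)$.

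For \ref{5.3} the key observation is that $\Phi \mapsto (\iota u)(\Phi) = \Phi(u)$ is linear, so its differential equals the map itself and $\ud(\iota u)(\Phi)(\Lso_X\Phi) = (\Lso_X\Phi)(u) = \Lie_X(\Phi(u)) - \Phi(\Lie_X u)$. Substituting into the definition of $\Lieh_X$, the two copies of $\Lie_X(\Phi(u))$ cancel and leave $\Phi(\Lie_X u) = (\iota(\Lie_X u))(\Phi)$, which is exactly $\Lieh_X \circ \iota = \iota \circ \Lie_X$. Finally, for \ref{5.4} one has $\Delta = \emptyset$, so $\VSO\emptyset = \{0\}$ and elements of $\Gb E\emptyset$ are constant maps whose differential vanishes; hence the correction term in $\Lieh_X$ drops out and both $\Lieh_X$ and $\Liet_X$ reduce to $s \mapsto \Lie_X(s(0)) = \Lie_X s$, the classical Lie derivative.
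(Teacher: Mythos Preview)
Your proposal is correct and follows essentially the same approach as the paper: both verify (i) by combining the classical Leibniz rule for $\Lie_X$ on sections with the product/chain rule for $\ud$ applied to the bilinear tensor product, dispatch (ii) and (iv) as immediate, and establish (iii) via the same cancellation after noting that $\ud(\iota u)(\Phi)(\Lso_X\Phi) = (\Lso_X\Phi)(u)$. Your write-up is in fact slightly more explicit than the paper's (e.g.\ justifying the product rule via boundedness of $\otimes_{C^\infty(M)}$ and spelling out the $\Delta=\emptyset$ case), but there is no substantive difference.
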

\begin{proof}
\ref{prop_derivprop.1}: For $\Phi \in \VSO \Delta$ we trivially have
\[ \Lie_X (( \GSa \otimes \GSb) ( \Phi)) = \Lie_X ( \GSa(\Phi) ) \otimes \GSb + \GSa(\Phi) \otimes \Lie_X (\GSb(\Phi)) \]
and, by the chain rule,
\[ \ud ( \GSa \otimes \GSb )(\Phi)(\Lso_X \Phi) = (\ud \GSa)(\Phi)(\Lso_X \Phi) \otimes \GSb(\Phi) + \GSa(\Phi) \otimes (\ud \GSb)(\Phi)(\Lso_X \Phi) \]
which gives the claim. 
\ref{prop_derivprop.2} and \ref{5.4} are clear, while \ref{5.3} is seen from $\Lieh_X ( \iota u )(\Phi) = - (\Lso_X \Phi)(u) + \Lie_X ( \Phi(u)) = \Phi ( \Lie_X u) = \iota ( \Lie_X u)(\Phi)$ for $u \in \cD'(M, E)$ and $\Phi \in \VSO E$.
\end{proof}

The Lie derivatives $\Liet_X$ and $\Lieh_X$ extend to the tensor algebra $\Gtb E\Delta$.

Classically, the Lie bracket $[\Lie_X, \Lie_Y]$ of two derivations on $C^\infty(M)$ induced by smooth vector fields $X,Y \in \fX(M)$ is again induced by a smooth vector field denoted by $[X,Y]$ and given by $[X,Y] = \Lie_X Y$. The same holds for $\Liet$ if we take generalized vector fields $X,Y \in \Gb {TM} \Delta$ and define their Lie bracket $[X,Y]$ as follows:
\begin{definition}Let $X,Y \in \Gb{TM}\Delta$. Then their Lie bracket $[X,Y] \in \Gb {TM}\Delta$ is defined as
\[ [X,Y] (\Phi) \coleq [X(\Phi), Y(\Phi)] \qquad (\Phi \in \VSO \Delta). \]
\end{definition}
Then $\Liet_X \circ \Liet_Y - \Liet_Y \circ \Liet_X = \Liet_{[X,Y]}$ and $[X,Y] = \Liet_X Y$ for $X,Y \in \Gb {TM}\Delta$. Moreover, $[X,Y]$ has the same properties as the classical Lie bracket: it is $\bK$-bilinear, antisymmetric, satisfies the Jacobi identity and for $\GFa,\GFb \in \Gb M \Delta$, $[\GFa X, \GFb Y] = \GFa \GFb  [X,Y] + (\GFa \Liet_X \GFb) Y - (\GFb \Liet_X \GFa)X$.

One may ask why it is necessary to consider two notions of Lie derivatives. Although both are natural in their own right, it results from the Schwartz impossibility result that one cannot have a Lie derivative $\Lie_X$ of generalized functions which at the same time commutes with the embedding of distributions and is $C^\infty$-linear in the direction $X$. The first property is indispensable for a geometric theory, and the second one is needed in a way for defining quantities like the curvature tensor. Hence, it can be understood that the notion of Lie derivative splits into two, each having one of the properties mentioned, but satisfying the other one in the sense of association, as we will see.

\section{Test objects}\label{sec_testobjects}

\def\STO{{\notationcolor A}} % set of test objects
\def\STOz{{\notationcolor A}} % set of zero test objects

The task of finding the right class of test objects for the quotient construction can without doubt be considered the decisive step in the development of diffeomorphism invariant Colombeau algebras. Needless to say, much variance is possible in this choice, which in turn is directly reflected in the properties of the algebras so obtained. It is therefore desirable to start with a very general class of test objects on which further properties can then be imposed as required for specific applications. We will describe such a general class for the construction of generalized section spaces in this section.

\begin{definition}A \emph{test object} on a vector bundle $E \to M$ is a net $(\Phi_\e)_\e \in \VSO E^I$ satisfying the following conditions.
\begin{enumerate}
 \item[(VSO1)] For any Riemannian metric $g$ on $M$ $\forall x_0 \in M$ $\exists$ an open neighborhood $V$ of $x_0$ $\forall r>0$ $\exists \e_0 \in I$ $\forall x \in V$ $\forall \e \le \e_0$ $\forall u \in \cD'(M, E)$: $(u|_{B^g_{r}(x)} = 0 \Rightarrow \Phi_\e(u)(x) = 0)$.
\item[(VSO2)] $\Phi_\e \to \id$ in $\Lincb(\cD'(M, E), \cD'(M, E))$.
\item[(VSO3)] $\forall \csna \in \csn{\VSO E}$ $\exists N \in \bN$: $\csna(\Phi_\e) = O(\e^{-N})$.
 \item[(VSO4)] $\forall \csna \in \csn{\Lincb(\Gamma(E), \Gamma(E))}$ $\forall m \in \bN$: $\csna(\Phi_\e|_{\Gamma(E)} - \id) = O(\e^m)$.
\end{enumerate}
A \emph{0-test object} is a sequence $(\Phi_\e)_\e \in \VSO{E}^I$ satisfying (VSO1), (VSO3) and the following conditions.
\begin{enumerate}
\item[(VSO2')] $\Phi_\e \to 0$ in $\Lincb(\cD'(M, E), \cD'(M, E))$.
\item[(VSO4')] $\forall \csna \in \csn { \Lincb(\Gamma(E), \Gamma(E))}$ $\forall m \in \bN$: $\csna(\Phi_\e|_{\Gamma(E)}) = O(\e^m)$.
\end{enumerate}
In other words, $(\Phi_\e)_\e$ is a $0$-test object if and only if $(\Phi_\e + \id)_\e$ is a test object. We denote by $\TOv E$ the set of all test objects, by $\TOvz E$ the set of all $0$-test objects, and by $\TOvl E$ the set of all $(\Phi_\e)_\e \in \VSO E^I$ satisfying (VSO1).
$\TOvz E$ and $\TOvl E$ are are $C^\infty(M)$-modules with multiplication $f \cdot (\Phi_\e)_\e \coleq (m_f \circ \Phi_\e)_\e$, as is easily verified. $\TOv E$ is an affine space over $\TOvz E$; in fact, for finitely many $f_i \in C^\infty(M)$ with $\sum_i f_i = 1$ and $(\Phi_{i,\e})_\e \in \TOv E$, $(\sum_i f_i \Phi_{i,\e})_\e \in \TOv E$ again.

Furthermore, we write $\TOv{U,E} \coleq \TOv{E|_U}$, $\TOvz{U,E} \coleq \TOvz{E|_U}$ as well as $\TOvl{U,E} \coleq \TOvl{E|_U}$. The notations $\TOv\Delta$, $\TOvz\Delta$ and $\TOvl\Delta$ mean families of corresponding test objects indexed by $\Delta$, e.g.,
\[ \TOv\Delta \coleq \prod_{G \in \Delta} \TOv G = \{ ( ( \Phi_{G,\e} )_\e )_G : (\Phi_{G,\e})_\e \in \TOv G\ \forall G \in \Delta\ \}. \]
\end{definition}

\begin{remark}\label{remarkiblah}
 \begin{enumerate}[label=(\roman*)]
  \item Note that (VSO1) is independent of the Riemannian metric because any two given Riemannian metrics are equivalent locally (see e.g.~\cite[(3.68)]{GKOS}x or \cite[Lemma 2]{rm_diss}).
  
  \item We point out that we would not strictly need (VSO1) for the construction of a Colombeau generalized function space, but because it is essential for obtaining the sheaf property we include it in the definition of test objects in any case.
  Moreover, if (VSO1) holds for certain $(V, r)$ it obviously also holds for all open subsets of $V$ and $r'>r$, and for all relatively compact sets $V$.

  \item \label{remarkiblah.3} It is instructive to compare the above test objects with those which have been employed for the construction of $\hg M$ (\cite[Definition 3.4]{global2}). Restricting our considerations to the scalar case, (VSO1) is more easily understood in terms of smoothing kernels instead of smoothing operators, as for $\vec\varphi_\e$ corresponding to $\Phi_\e$ it simply states that $\supp \vec\varphi_\e(x)$ is eventually contained in an arbitrarily small neighborhood of $x$ in a locally uniform but otherwise arbitrary way. In contrast, \cite{global2} requires that the support of $\vec\varphi_\e(x)$ shrinks linearly with $\e$. Furthermore, instead of (VSO3) and (VSO4), in \cite{global2} one only has estimates on derivatives of $\vec\varphi_\e$ of the form $\pd_y^\alpha \pd_{x+y}^\beta \vec\varphi(x)(y)$ (in local coordinates) and demands convergence to $\id$ merely in the topology of uniform convergence on compact sets instead of the topology of uniform convergence on compact sets in all derivatives. Because one cannot avoid having to use properties (VSO2-4) in the construction of Colombeau algebras they are in effect also used in \cite{global2} but are proved from the other properties of test objects there. This is less natural and efficient than the definitions we start with above. Furthermore, because of these more restrictive properties, obtaining diffeomorphism invariance is much more involved than in our case, where it essentially follows from Lemma \ref{diffeoinv} \ref{diffeoinv.2}.
 \end{enumerate}
\end{remark}

We will also need sets of test objects for which the above conditions hold uniformly:
\begin{definition}\label{def_uniform}
 A set $\STO \subset \TOv{E}$ of test objects is called \emph{uniform} if conditions (VSO1)--(VSO4) hold uniformly for all of its elements, i.e.:
\begin{enumerate}[label={(UVSO\arabic*)}]
 \item[(UVSO1)] For any Riemannian metric $g$ on $M$ $\forall x_0 \in M$ $\exists$ an open neighborhood $V$ of $x_0$ $\forall r>0$ $\exists \e_0 \in I$ $\forall (\Phi_\e)_\e \in \STO$ $\forall x \in V$ $\forall \e \le \e_0$ $\forall u \in \cD'(M, E)$: $(u|_{B^g_{r}(x)} = 0 \Rightarrow \Phi_\e(u)(x) = 0)$. 
\item[(UVSO2)] $\forall \csna \in \csn {\Lincb(\cD'(M,E), \cD'(M,E))}$: $\sup_{(\Phi_\e)_\e \in \STO} \csna(\Phi_\e - \id) \to 0$.
\item[(UVSO3)] $\forall \csna \in \csn{\VSO E}$ $\exists N \in \bN$: $\sup_{(\Phi_\e)_\e \in \STO} \csna(\Phi_\e) = O(\e^{-N})$.
 \item[(UVSO4)] $\forall \csna \in \csn{\Lincb(\Gamma(E), \Gamma(E))}$ $\forall m \in \bN$: $\sup_{(\Phi_\e)_\e \in \STO}\csna(\Phi_\e|_{\Gamma(E)} - \id) = O(\e^m)$.
\end{enumerate}
Similarly, a set $\STOz \subset \TOvz{E}$ of 0-test objects is called \emph{uniform} if it satisfies (UVSO1), (UVSO3) and the following conditions:
\begin{enumerate}
\item[(UVSO2')] $\forall \csna \in \csn{\Lincb(\cD'(M, E), \cD'(M, E))}$: $\sup_{(\Phi_\e)_\e \in \STOz} \csna(\Phi_\e) \to 0$.
\item[(UVSO4')] $\forall \csna \in \csn { \Lincb(\Gamma(E), \Gamma(E))}$ $\forall m \in \bN$: $\sup_{(\Phi_\e)_\e \in \STOz} \csna(\Phi_\e|_{\Gamma(E)}) = O(\e^m)$.
\end{enumerate}
\end{definition}
Subsets of $\TOv\Delta$ or $\TOvz\Delta$ are called uniform if each of their components in $\TOv G$ or $\TOvz G$ for $G \in \Delta$ is uniform. Such uniform sets of (0-) test objects will be employed in Theorem \ref{nonegder}, where we show that negligibility of moderate generalized functions can be tested without resorting to derivatives. The following Lemma shows why the concept of 0-test objects is needed, namely in order to have $\Lieh_X$ preserve moderateness and negligibility.

\begin{lemma}\label{diffeoinv}
\begin{enumerate}[label=(\roman*)]
 \item $\Lso_X$ acting on $(\Phi_\e)_\e \in \VSO E^I$ transforms the above properties in the following way: (VSO1) $\Rightarrow$ (VSO1), (VSO2) $\Rightarrow$ (VSO2'), (VSO3) $\Rightarrow$ (VSO3), (VSO4) $\Rightarrow$ (VSO4'), and similarly for the uniform conditions. Hence, it maps $\TOv E$ into $\TOvz E$.
 \item \label{diffeoinv.2} Vector bundle isomorphisms as in Definition \ref{def_vbiso} preserve all the above properties (VSO1)--(VSO4), (VSO2'), (VSO4'), and their uniform variants.
\end{enumerate}
\end{lemma}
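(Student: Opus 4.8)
The plan for part (i) is to record that $\Lso_X = \Lie_X\circ(\cdot) - (\cdot)\circ\Lie_X$ is a continuous linear operator both on $\VSO E$ (as noted in the text) and on $\Lincb(\Gamma(E),\Gamma(E))$, the latter because $\Lie_X$ is continuous on $\Gamma(E)$ and left/right composition by a fixed continuous operator is continuous. Then (VSO3)$\Rightarrow$(VSO3) is immediate: for $\csna$ choose a dominating seminorm $\csnb$ with $\csna(\Lso_X\Phi)\le C\csnb(\Phi)$ and insert the polynomial bound from (VSO3). For (VSO2)$\Rightarrow$(VSO2') I would use that $\Lie_X$ is continuous on $\cD'(M,E)$, so that $\Lie_X\circ\Phi_\e\to\Lie_X$ and $\Phi_\e\circ\Lie_X\to\Lie_X$ in $\Lincb(\cD'(M,E),\cD'(M,E))$ (left composition preserves uniform convergence on bounded sets, and right composition does as well since $\Lie_X$ carries bounded sets to bounded sets); subtracting gives $\Lso_X\Phi_\e\to 0$. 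The decisive point for (VSO4)$\Rightarrow$(VSO4') is the identity $(\Lso_X\Phi_\e)|_{\Gamma(E)} = \Lso_X(\Phi_\e|_{\Gamma(E)})$ combined with $\Lso_X\id = 0$: writing $\Phi_\e|_{\Gamma(E)} = \id + R_\e$ with $R_\e = O(\e^m)$ by (VSO4), linearity yields $\Lso_X(\Phi_\e|_{\Gamma(E)}) = \Lso_X R_\e = O(\e^m)$ by continuity. This cancellation of the identity is precisely why a test object is carried into a $0$-test object.

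The genuinely nontrivial implication is (VSO1)$\Rightarrow$(VSO1), since locality is not a continuous-linear statement. Here I would expand $(\Lso_X\Phi_\e)(u)(x) = \Lie_X(\Phi_\e(u))(x) - \Phi_\e(\Lie_X u)(x)$ and use that $\Lie_X$ is a first-order differential operator, hence support-non-increasing. For the second term, if $u|_{B^g_r(x)} = 0$ then $\Lie_X u$ vanishes on $B^g_r(x)$ too, and (VSO1) for $\Phi_\e$ gives $\Phi_\e(\Lie_X u)(x) = 0$ directly. For the first term, evaluating $\Lie_X$ at $x$ requires $\Phi_\e(u)$ to vanish on a whole neighbourhood of $x$, not merely at $x$; I would therefore shrink $V$ to a relatively compact subneighbourhood and, for each such $x$ and each sufficiently small $r$, apply (VSO1) of $\Phi_\e$ at the halved radius $r/2$ to all $y\in W\coleq B^g_{r/2}(x)\subseteq V$, for which $B^g_{r/2}(y)\subseteq B^g_r(x)$, obtaining $\Phi_\e(u)|_W = 0$ and hence $\Lie_X(\Phi_\e(u))(x) = 0$. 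Monotonicity of (VSO1) in $r$ (Remark \ref{remarkiblah}) disposes of the large-$r$ case, and the minimum of the two thresholds $\e_0$ furnishes the required threshold; the uniform variant (UVSO1) follows verbatim since the thresholds it supplies are already uniform over the set.

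For part (ii) I would write $\mu_*\Phi = \mu_*\circ\Phi\circ\mu^*$ and exploit that the pushforward of sections $\mu_*\colon\Gamma(E)\to\Gamma(F)$, the pushforward of distributions $\mu_*\colon\cD'(M,E)\to\cD'(N,F)$, and the inverse pullback $\mu^*$ are all topological isomorphisms compatible with the inclusions $\Gamma\hookrightarrow\cD'$. Thus $\Phi\mapsto\mu_*\Phi$ is conjugation by these isomorphisms, which fixes $\id$; consequently $\mu_*\Phi_\e - \id = \mu_*\circ(\Phi_\e-\id)\circ\mu^*$ on $\cD'(N,F)$ and $(\mu_*\Phi_\e)|_{\Gamma(F)} - \id = \mu_*\circ(\Phi_\e|_{\Gamma(E)}-\id)\circ\mu^*$ on $\Gamma(F)$. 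Continuity of conjugation then transports (VSO2), (VSO4) and their primed and uniform versions, while (VSO3) follows because $\Phi\mapsto\mu_*\Phi$ is itself a topological isomorphism $\VSO E\to\VSO F$.

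The only remaining case, (VSO1) in part (ii), is again the geometric one: I would pass to the pullback metric $g = f^*h$, under which $f$ is an isometry, so that $\mu^* u$ vanishes on $B^g_r(f^{-1}(y)) = f^{-1}(B^h_r(y))$ whenever $u|_{B^h_r(y)} = 0$; applying (VSO1) of $\Phi_\e$ and then the fibrewise isomorphism $\mu$ (which satisfies $\mu(0) = 0$) yields $(\mu_*\Phi_\e)(u)(y) = 0$. The main obstacle throughout is thus the locality condition (VSO1): in part (i) the neighbourhood argument for the differential-operator term, and in part (ii) the transfer of metric balls across $f$. Every other condition reduces to continuity together with the fixed-identity property $\Lso_X\id = 0$, respectively conjugation fixing $\id$.
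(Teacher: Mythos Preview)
Your argument is correct and follows essentially the same route as the paper: continuity of $\Lso_X$ (together with $\Lso_X\id=0$) handles (VSO2)--(VSO4) and their primed/uniform variants, the half-radius trick handles (VSO1) in part~(i), and conjugation by the topological isomorphisms $\mu_*,\mu^*$ together with the pullback metric handles part~(ii). The only cosmetic difference is in the (VSO1) argument for part~(i): the paper avoids your small-$r$/large-$r$ case split by taking, for each $x\in V$ and each $r$, the neighbourhood $V'\subseteq V\cap B^g_{r/2}(x)$ of $x$ directly, which always exists and keeps the same $V$ throughout.
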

\begin{proof} \checkedproof
(i) Fix a Riemannian metric $g$ and $x_0 \in M$. Because $(\Phi_\e)_\e$ satisfies (VSO1) there exists an open neighborhood $V$ of $x_0$ such that $\forall r > 0$ $\exists \e_0$ $\forall x \in V$ $\forall \e \le \e_0$ $\forall u \in \cD'(M,E)$:
$( u|_{B^g_{r/2}(x)} = 0 \Rightarrow \Phi_\e(u)(x) = 0 )$.
Now suppose $u$ vanishes on $B^g_r(x)$ for some $x \in V$. Choose an open neigborhood $V' \subseteq V \cap B^g_{r/2}(x)$ of $x$. For any $y \in V'$, $u|_{B^g_{r/2}(y)}=0$ holds and implies $\Phi_\e(u)(y)=0$, which gives $\Lie_X(\Phi_\e(u))(x)=0$. Moreover, $(\Lie_X u)|_{B^g_{r/2}(x)}=0$ implies $\Phi_\e(\Lie_X u)(x)=0$. In sum, $(\Lso_X \Phi_\e)(u)(x) = \Lie_X ( \Phi_\e(u))(x) - \Phi_\e ( \Lie_X u)(x) = 0$, which means that $(\Lso_X \Phi_\e)_\e$ satisfies (VSO1).

For the other conditions note that $\Lso_X = (\Lie_X)_* - (\Lie_X)^*$, $\Phi \mapsto \Lie_X \circ \Phi - \Phi \circ \Lie_X$ is continuous, hence for $\csna$ a continous seminorm of any of $\Lincb(\Gamma(E), \Gamma(E))$, $\Lincb(\VSO E, \VSO E)$ or $\Lincb(\cD'(M, E), \cD'(M,E))$, $\csna \circ \Lso_X$ is also a continuous seminorm of the same space, which implies the claim. Uniformity simply goes through.

(ii) The claim for (VSO1) follows by taking the pullback metric.
The rest follows because $\csna \mapsto \csna \circ \mu_*$ transforms all involved seminorms appropriately.
\end{proof}

\begin{remark}\label{remarki}
\begin{enumerate}[label=(\roman*)]
 \item The above conditions are the bare minimum needed for the construction of a Colombeau algebra. In practice, many more conditions can be added easily e.g.~for stronger association properties.
 \item \label{remarki.2} Because $\Gamma(E)$ and $\cD'(M,E)$ are Montel spaces, the Banach-Steinhaus theorem implies that conditions (VSO2,2',3,4,4') are already satisfied if they hold for the weak topologies on the respective spaces of linear mappings. \proofs{\#42}
 \end{enumerate}
 \end{remark}

Smoothing operators and test objects can also be restricted in a certain sense, which will be the essential ingredient for obtaining the sheaf property of the Colombeau quotient.

\begin{theorem}\label{sk_restriction}
For any open subsets $U,V \subseteq M$ with $V \subseteq U$ there exists a linear continuous mapping
$\rSO_{V,U}\colon \VSO{U,E}\to \VSO{V,E}$ such that the following holds:
\begin{enumerate}[label=(\roman*)]
 \item\label{36.1} Given $(\Phi_\e)_\e \in \TOvl{U, E}$ and any Riemannian metric $g$ on $U$, each $x \in V$ has an open neighborhood $X \subseteq V$ such that for some $r_0>0$, $\overline{B^g_{r_0}(X)} \csub V$ and for all $0 < r \le r_0$ there is $\e_0 \in I$ such that
\begin{multline}\label{kalt}
\forall \e \le \e_0\ \forall y \in X\ \forall v \in \cD'(V, E)\ \forall \tilde v \in \cD'(U,E):\\
(v|_{B^g_r(y)} = \tilde v|_{B^g_r(y)} \Rightarrow (\rSO_{V,U} \Phi_\e)(v)(y) = \Phi_\e(\tilde v)(y)).
\end{multline}
\item\label{9.1.1} For $\Phi \in \VSO{U,E}$ and $f \in C^\infty(U)$, $\rSO_{V,U} ( f \cdot \Phi) = f|_V \cdot \rSO_{V,U}(\Phi)$.
\item\label{9.2} For $\Phi, \Psi \in \VSO{U,E}$ and $p \in V$, $\ev_p \circ \Phi = \ev_p \circ \Psi$ implies $\ev_p \circ \rSO_{V,U}\Phi = \ev_p \circ \rSO_{V,U}\Psi$.
\end{enumerate}
\end{theorem}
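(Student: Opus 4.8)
The plan is to build $\rSO_{V,U}$ out of a fixed partition of unity on $V$ together with extension-by-zero of compactly supported distributions, so that each $\rSO_{V,U}\Phi$ is assembled from $\Phi$ by localized extension, application of $\Phi$, and restriction of the resulting section. Concretely, first I would fix a locally finite open cover $(W_j)_j$ of $V$ with each $\overline{W_j}$ compact and contained in $V$, a subordinate partition of unity $(\chi_j)_j$ (so $\supp\chi_j\subseteq W_j$ and $\sum_j\chi_j=1$ on $V$), and cutoffs $\theta_j\in C^\infty(V)$ with compact support in $V$ satisfying $\theta_j\equiv 1$ on an open neighborhood of $\overline{W_j}$. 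For each $j$ the map $\beta_j\colon\cD'(V,E)\to\cD'(U,E)$ sending $v$ to the extension by zero to $U$ of $m_{\theta_j}v$ is continuous, since multiplication by $\theta_j$ sends $\cD'(V,E)$ continuously into distributional sections supported in the fixed compact set $\supp\theta_j\csub V$, on which extension by zero into $\cD'(U,E)$ is the natural continuous inclusion. I would then define
\[ (\rSO_{V,U}\Phi)(v)\coleq\sum_j \chi_j\cdot\bigl(\Phi(\beta_j v)\big|_V\bigr)\qquad(v\in\cD'(V,E)). \]
Since the family $(\supp\chi_j)_j$ is locally finite this sum is locally finite, hence defines a smooth section on $V$; and because the seminorms of $\Gamma(V,E)$ are local, only finitely many summands contribute to any one of them, so as a locally finite sum of compositions of fixed continuous linear maps with $\Phi$ entering linearly, $\rSO_{V,U}\Phi\in\VSO{V,E}$ and $\rSO_{V,U}$ is linear and continuous from $\VSO{U,E}$ to $\VSO{V,E}$ for the topologies of uniform convergence on bounded sets.

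Properties \ref{9.1.1} and \ref{9.2} would then follow by direct inspection. Since $f\cdot\Phi=m_f\circ\Phi$, pulling the factor $f|_V$ out of every summand and using $\sum_j\chi_j=1$ on $V$ gives $\rSO_{V,U}(f\cdot\Phi)=f|_V\cdot\rSO_{V,U}\Phi$. And because each summand only ever evaluates $\Phi$ at the point at which the section is read off, the hypothesis $\ev_p\circ\Phi=\ev_p\circ\Psi$ propagates termwise to $\ev_p\circ\rSO_{V,U}\Phi=\ev_p\circ\rSO_{V,U}\Psi$.

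The heart of the argument, and the step I expect to be the main obstacle, is the locality statement \ref{36.1}. The key observation is that by linearity (VSO1) for $\Phi_\e$ upgrades to the implication $w_1|_{B^g_r(y)}=w_2|_{B^g_r(y)}\Rightarrow\Phi_\e(w_1)(y)=\Phi_\e(w_2)(y)$. Given $x\in V$ I would apply (VSO1) at $x$ to obtain a neighborhood on which $\Phi_\e$ is local, then shrink to a neighborhood $X\ni x$ and choose $r_0>0$ so small that $\overline{B^g_{r_0}(X)}\csub V$ and, for each of the finitely many indices $j$ with $\supp\chi_j\cap X\neq\emptyset$, one has $B^g_{r_0}(y)\subseteq\{\theta_j\equiv 1\}$ for all $y\in X$; this is possible because $\{\theta_j\equiv 1\}$ is an open neighborhood of $\supp\chi_j$. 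For $0<r\le r_0$, (VSO1) supplies $\e_0$. Then for $y\in X$, $\e\le\e_0$ and $v,\tilde v$ with $v|_{B^g_r(y)}=\tilde v|_{B^g_r(y)}$, the distribution $\beta_j v$ agrees with $v$, hence with $\tilde v$, on $B^g_r(y)\subseteq\{\theta_j\equiv1\}$, so the upgraded locality yields $\Phi_\e(\beta_j v)(y)=\Phi_\e(\tilde v)(y)$ for every contributing $j$; summing against $\sum_j\chi_j(y)=1$ gives $(\rSO_{V,U}\Phi_\e)(v)(y)=\Phi_\e(\tilde v)(y)$. The delicate points to get right are the simultaneous choice of $X$ and $r_0$ making all relevant balls sit inside the plateaus $\{\theta_j\equiv1\}$ while keeping $\overline{B^g_{r_0}(X)}\csub V$, and the bookkeeping of quantifiers so that the single $\e_0$ coming from (VSO1) serves uniformly over $y\in X$ and over the finitely many contributing indices $j$.
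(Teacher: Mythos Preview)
Your construction and argument are essentially identical to the paper's: the same partition-of-unity formula $(\rSO_{V,U}\Phi)(v)=\sum_j \chi_j\cdot\Phi(\theta_j v)|_V$, the same termwise verification of \ref{9.1.1} and \ref{9.2}, and the same locality argument for \ref{36.1} based on the observation that for $y\in\supp\chi_j$ the cutoff $\theta_j$ is identically $1$ near $y$, so $\beta_j v$ and $\tilde v$ agree on small balls and (VSO1) applies. The only cosmetic difference is that the paper first fixes $X$ as a relatively compact neighborhood meeting finitely many $\supp\chi_W$ and then invokes (VSO1) via compactness to get a uniform $\e_0$, whereas you first take the (VSO1)-neighborhood and then shrink; and the paper only requires $B^g_{r_0}(X\cap\supp\chi_{W_i})\subseteq\theta_{W_i}^{-1}(1)$ rather than your slightly stronger $B^g_{r_0}(y)\subseteq\{\theta_j\equiv 1\}$ for all $y\in X$, but your version is still achievable after shrinking $X$ appropriately.
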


\begin{proof}\checkedproof
Cover $V$ by a family of open relatively compact subsets $W$ such that $\overline{W} \subseteq V$. Choose a partition of unity $(\chi_W)_W$ on $V$ subordinate to this family and a function $\theta_W \in C^\infty(U)$ for each $W$ such that $\theta_W \equiv 1$ on an open neighborhood of $\supp \chi_W$ and $\supp \theta_W \subseteq V$. Define $\rSO_{V,U} \colon \VSO{U,E} \to \VSO{V,E}$ by
\[ (\rSO_{V,U}\Phi)(u) \coleq \sum_W \chi_W \cdot \Phi(u \cdot \theta_W)|_V\qquad (u \in \cD'(V,E)). \]
Note that $u \cdot \theta_W \in \cD'(U,E)$ here.
It is easily verified that $\rSO_{V,U}\Phi \in \VSO{V,E}$. Any $x \in V$ has an open neighborhood $X \subseteq V$ intersecting only finitely many $\supp \chi_W$, say those for $W=W_1, \dotsc, W_n$. $\overline{X} \subseteq \bigcup_{i=1}^n \overline{\supp \chi_{W_i}} \subseteq \bigcup_{i=1}^n \overline{W_i}$ hence is compact and contained in $V$, and
\begin{equation}\label{blah}
(\rSO_{V,U} \Phi)(u)|_X = \sum_{i=1}^n \chi_{W_i}|_X \cdot \Phi(u \cdot \theta_W)|_X.
\end{equation}
Because $\Lincb(\cD'(V,E), \Gamma(V, E))$ carries the projective topology with respect to all maps $\Phi \mapsto |_X \circ \Phi$ into $\Lincb(\cD'(V,E), \Gamma(X,E))$, where $X$ runs through any open cover of $V$, and $|_X \circ \rSO_{V,U}$ as given by \eqref{blah} is a sum of linear continuous maps, $\rSO_{V,U}$ is continuous. \ref{9.1.1} and \ref{9.2} are clear from the definition.

For \ref{36.1}, fix $(\Phi_\e)_\e \in \TOvl{U, E}$ and a Riemannian metric $g$ on $U$. Take $r_0>0$ so small that $\overline{B^g_{r_0}(X \cap \supp \chi_{W_i})} \subseteq \theta_{W_i}^{-1}(1)$ for all $i$ and hence $\overline{B^g_{r_0}(X)} \csub V$.

Now suppose $0 < r \le r_0$. By (VSO1) and because $X$ is relatively compact there is $\e_0 \in I$ such that $\forall y \in X$ $\forall \e \le \e_0$ $\forall u \in \cD'(U,E)$: ($u|_{B^g_r(y)} = 0 \Rightarrow \Phi_\e(u)(y) = 0$). Let $\e \le \e_0$, $y \in X$, $v \in \cD'(V,E)$ and $\tilde v \in \cD'(U,E)$ be given with $v|_{B^g_r(y)} = \tilde v|_{B^g_r(y)}$.  Now for $y \in X \cap \supp \chi_{W_i}$ we have $(v \cdot \theta_{W_i} - \tilde v)|_{B^g_r(y)} = 0$ and hence
\[
 (\rSO_{V,U} \Phi_\e)(v)(y) - \Phi_\e(\tilde v)(y) = \sum_{i=1}^n \chi_{W_i}(y) \cdot \Phi_\e(v \cdot \theta_{W_i} - \tilde v)(y) = 0.\qedhere
\]
\end{proof}

\begin{remark}\label{alpharemark}In practice, \eqref{kalt} gives a way to evaluate $\rSO_{V,U} \Phi_\e$: given a relatively compact subset $X$ with $\overline{X} \subseteq V$, take $f \in C^\infty(U)$ with $\supp f \subseteq V$ and $f \equiv 1$ on an open neighborhood of $\overline{X}$. Then for small $\e$ we will have $(\rSO_{V,U} \Phi_\e)(v)|_X = \Phi_\e(f \cdot v)|_X$. Similarly, one sees that for $u \in \cD'(U, E)$, $\Phi_\e(u)|_X = (\rSO_{V,U} \Phi_\e )(u|_V)|_X$ for small $\e$.
\end{remark}

The mapping of Theorem \ref{sk_restriction} turns $U \mapsto \TOvl{U,E}$ into a presheaf if we consider those smoothing operators which agree \emph{locally} and \emph{eventually} in the following sense to be equivalent.

\begin{definition}
Let $\TOvn{U,E}$ denote the $C^\infty(U)$-module of all nets $(\Phi_\e)_\e \in \VSO{U,E}^I$ such that for each $x \in U$ there is an open neighborhood $V$ in $U$ and $\e_0 \in I$ such that for $\e \le \e_0$, $|_V \circ \Phi_\e = 0$. Then $\TOvn{U,E}$ is a submodule of $\TOvz{U,E} \subseteq \TOvl{U,E}$ and we define the quotient modules $\TOqvz{U,E} \coleq \TOvz{U,E} / \TOvn{U,E}$ and $\TOqvl{U, E} \coleq \TOvl{U,E} / \TOvn{U,E}$. For $(\Phi_\e)_\e, (\Psi_\e)_\e \in \TOvl{U,E}$ we write $(\Phi_\e)_\e \sim (\Psi_\e)_\e$ if $(\Phi_\e)_\e - (\Psi_\e)_\e \in \TOvn{U,E}$.

We define an equivalence relation on $\TOv{U,E}$ by setting $(\Phi_\e)_\e \sim (\Psi_\e)_\e$ if $(\Phi_\e)_\e - (\Psi_\e)_\e \in \TOvn{U,E}$, where $(\Phi_\e)_\e$,$(\Psi_\e)_\e$ in $\TOv{U, E}$. By $\TOqv{U,E} \coleq \TOv{U,E} / \sim$ we denote the quotient set, which is an affine space over $\TOqvz{U,E}$.
\end{definition}

We will now show that $U \mapsto \TOqvl{U,E}$ is a sheaf of $C^\infty$-modules. The first step is the presheaf property.

\begin{proposition}Let $(\Phi_\e)_\e \in \TOvl{U,E}$. Then
\begin{enumerate}[label=(\roman*)]
\item\label{9.0} $(\rSO_{V,U} \Phi_\e)_\e \in \TOvl{V,E}$,
\item\label{9.2.1} $(\Phi_\e)_\e \sim 0$ implies $(\rSO_{V,U} \Phi_\e)_\e \sim 0$,
\item\label{9.3} for open sets $U_2 \subseteq U_1 \subseteq U$ we have $((\rSO_{U_2, U_1} \circ \rSO_{U_1, U})( \Phi_\e))_\e \sim (\rSO_{U_2, U} \Phi_\e)_\e$.
\end{enumerate}
\end{proposition}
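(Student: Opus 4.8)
The plan is to establish the three parts separately, in each case reducing to the local evaluation of $\rSO_{V,U}$ provided by Theorem \ref{sk_restriction} \ref{36.1} and Remark \ref{alpharemark}. For \ref{9.0} it suffices, since $\rSO_{V,U}\Phi_\e \in \VSO{V,E}$ by Theorem \ref{sk_restriction}, to verify (VSO1) for $(\rSO_{V,U}\Phi_\e)_\e$; and by the metric independence of (VSO1) (Remark \ref{remarkiblah}) I may work with the restriction to $V$ of a fixed metric $g$ on $U$. Given $x_0 \in V$, Theorem \ref{sk_restriction} \ref{36.1} furnishes a neighborhood $X \subseteq V$ and $r_0 > 0$ so that for each $r \le r_0$ there is a threshold $\e_0$ with the stated transfer property. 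Taking $\tilde v = 0$ turns the hypothesis $u|_{B^g_r(x)} = 0$ into $u|_{B^g_r(x)} = \tilde v|_{B^g_r(x)}$, whence $(\rSO_{V,U}\Phi_\e)(u)(x) = \Phi_\e(0)(x) = 0$; the case $r > r_0$ reduces to $r_0$ because vanishing on $B^g_r(x)$ forces vanishing on the smaller ball $B^g_{r_0}(x)$. This is exactly the quantifier pattern of (VSO1).

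For \ref{9.2.1} I would localize through the partition of unity $(\chi_W)_W$ and cut-offs $\theta_W$ defining $\rSO_{V,U}$. Fix $x \in V$ and choose a relatively compact open neighborhood $X$ of $x$ with $\overline X \subseteq V$ meeting only the finitely many supports $\supp\chi_{W_1}, \dots, \supp\chi_{W_n}$, so that $(\rSO_{V,U}\Phi_\e)(u)|_X = \sum_{i=1}^n \chi_{W_i}|_X \cdot \Phi_\e(u\cdot\theta_{W_i})|_X$. The hypothesis $(\Phi_\e)_\e \sim 0$ means $(\Phi_\e)_\e \in \TOvn{U,E}$, so every point of the compact set $\overline X \subseteq U$ has a neighborhood on which the operators $\Phi_\e$ eventually vanish identically; covering $\overline X$ by finitely many such neighborhoods and taking the least threshold yields $\e_0$ with $\Phi_\e(w)|_X = 0$ for all $w \in \cD'(U,E)$ and $\e \le \e_0$. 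In particular every term $\Phi_\e(u\cdot\theta_{W_i})|_X$ vanishes, giving $|_X \circ \rSO_{V,U}\Phi_\e = 0$ for $\e \le \e_0$; as $x$ was arbitrary this is precisely $(\rSO_{V,U}\Phi_\e)_\e \in \TOvn{V,E}$.

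For \ref{9.3} I would compare both operators on a neighborhood of an arbitrary $x \in U_2$ by rewriting each restriction, via Remark \ref{alpharemark}, as $\Phi_\e$ applied to a cut-off of its argument. Pick a relatively compact $X \ni x$ with $\overline X \subseteq U_2$ and compactly supported cut-offs $f, f_1, f_2$ (supported in $U_2, U_2, U_1$ respectively, so that the products below are the evident elements of $\cD'(U,E)$) each identically $1$ on a common open neighborhood $O$ of $\overline X$. Three applications of Remark \ref{alpharemark} give, for small $\e$ and all $v \in \cD'(U_2,E)$,
\begin{align*}
(\rSO_{U_2,U}\Phi_\e)(v)|_X &= \Phi_\e(f\cdot v)|_X, \\
(\rSO_{U_2,U_1}(\rSO_{U_1,U}\Phi_\e))(v)|_X &= (\rSO_{U_1,U}\Phi_\e)(f_1\cdot v)|_X = \Phi_\e(f_2 f_1\cdot v)|_X.
\end{align*}
On $O$ one has $f = 1 = f_2 f_1$, so $(f - f_2 f_1)\cdot v$ vanishes on $O$. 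Now (VSO1) of $(\Phi_\e)_\e$ provides a neighborhood $X' \subseteq X$ of $x$, and for $r$ small enough that $B^g_r(x') \subseteq O$ for all $x' \in X'$ a threshold making $\Phi_\e((f - f_2 f_1)\cdot v)|_{X'} = 0$. Hence the two sides agree on $X'$ for small $\e$, i.e.\ $|_{X'} \circ \bigl( \rSO_{U_2,U_1}\circ\rSO_{U_1,U} - \rSO_{U_2,U}\bigr)(\Phi_\e) = 0$, so the difference lies in $\TOvn{U_2,E}$.

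The step I expect to be delicate is not any single estimate but the uniformity in the distributional argument that the relation $\sim$ demands: the conclusions require the relevant operators to vanish on a whole neighborhood, i.e.\ for all $v$ simultaneously, rather than pointwise in $v$. This is secured precisely because the threshold $\e_0$ in Theorem \ref{sk_restriction} \ref{36.1} --- and therefore in Remark \ref{alpharemark} --- is chosen before $v$ and the base point, so all the eventual identities above hold uniformly over $v$ and over $x'$; together with the compactness argument in \ref{9.2.1} this is what upgrades the pointwise identities to identities of restricted operators.
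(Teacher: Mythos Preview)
Your proof is correct and follows essentially the same route as the paper, namely reducing everything to the local evaluation formula of Theorem~\ref{sk_restriction}~\ref{36.1} and Remark~\ref{alpharemark}. Two minor differences are worth noting. For~\ref{9.0} your argument is actually cleaner than the paper's: taking $\tilde v = 0$ directly in~\eqref{kalt} avoids the cut-off $f$ and the separate appeal to (VSO1) for $(\Phi_\e)_\e$ that the paper makes. Conversely, for~\ref{9.3} your argument is slightly more roundabout: by introducing three distinct cut-offs $f, f_1, f_2$ you create the discrepancy $f - f_2 f_1$ and then need (VSO1) to kill it on a smaller neighborhood $X'$. The paper observes that a \emph{single} cut-off $f \in C^\infty(U)$ with $\supp f \subseteq U_2$ and $f \equiv 1$ near $\overline X$ serves in all three applications of Remark~\ref{alpharemark}, so that both sides equal $\Phi_\e(f \cdot v)|_X$ on the nose for small $\e$, and no further shrinking is needed.
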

\begin{proof}

\ref{9.0}: 
Fix $x_0 \in V$ and a Riemannian metric $g$ on $U$. Choose $X$ and $r_0$ as in Theorem \ref{sk_restriction} \ref{36.1}. Fix some $0 < r \le r_0$ for showing (VSO1). Choose $f \in C^\infty(U)$ with $\supp f \subseteq V$ and $f \equiv 1$ on $B^g_{r_0}(X)$. Insert $r$ in Theorem \ref{sk_restriction} \ref{36.1} and (VSO1) (with $V$ there given by the relatively compact set $X$), which gives $\e_0$ and $\e_1$; let $\e \le \min(\e_0, \e_1)$, $y \in X$ and $v \in \cD'(V,E)$ with $v|_{B^g_r(y)} = 0$. Then $f \cdot v \in \cD'(U,E)$ satisfies $(f \cdot v)|_{B^g_r(y)} = v|_{B^g_r(y)} = 0$, hence $(\rSO_{V,U} \Phi_\e)(v)(y) = \Phi_\e(f \cdot v)(y) = 0$.

For \ref{9.2.1}, let $x \in V$. There is an open neighborhood $W$ of $x$, which can be taken relatively compact such that $\overline{W} \csub V$, such that $|_W \circ \Phi_\e = 0$ for small $\e$. With $f \in C^\infty(U)$ such that $\supp f \subseteq V$ and $f \equiv 1$ on an open neighborhood of $\overline{W}$ we have $(\rSO_{V,U}\Phi_\e)(u)|_W = \Phi_\e(f \cdot u)|_W = 0$ for all $u \in \cD'(V,E)$ and small $\e$ by assumption.

For \ref{9.3} we only have to note that for all open relatively compact sets $X$ with $\overline{X} \subseteq U_2$ and $f \in C^\infty(U)$ with $f \equiv 1$ on an open neighborhood of $\overline{X}$ and $\supp f \subseteq U_2$, for small $\e$ we have
\[
(\rSO_{U_2, U_1} ( \rSO_{U_1, U} \Phi_\e )) (u)|_X = \rSO_{U_1,U}(\Phi_\e)(f \cdot u)|_X = \Phi_\e(f \cdot u)|_X  = \rSO_{U_2, U}(\Phi_\e)(u)|_X.\qedhere
\]
\end{proof}
In conjunction with Theorem \ref{sk_restriction} \ref{9.1.1} this means that $U \mapsto \TOqvl {U,E}$ is a presheaf of $C^\infty$-modules, where the restriction map $|_V\colon \TOqvl{U,E} \to \TOqvl{V,E}$ is determined by $(\Phi_\e)_\e \mapsto (\rSO_{V,U}\Phi_\e)_\e$. We will now show that we can also uniquely glue together coherent families:

\begin{proposition}\label{sk_sheaf}
 $U \mapsto \TOqvl{U,E}$ is a sheaf of $C^\infty$-modules on $M$.
\end{proposition}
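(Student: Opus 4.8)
The presheaf structure having been established, it remains to verify the two sheaf axioms for an arbitrary open cover $(U_i)_{i \in J}$ of an open set $U \subseteq M$: separation and the existence of gluings. For separation, suppose $(\Phi_\e)_\e \in \TOvl{U,E}$ satisfies $(\rSO_{U_i,U}\Phi_\e)_\e \sim 0$ for every $i$; I must show $(\Phi_\e)_\e \in \TOvn{U,E}$. Given $x \in U$, pick $i$ with $x \in U_i$. By hypothesis there is an open neighborhood $V \subseteq U_i$ of $x$ and some $\e_0$ with $|_V \circ \rSO_{U_i,U}\Phi_\e = 0$ for $\e \le \e_0$. Choose a relatively compact $X$ with $x \in X$ and $\overline{X} \subseteq V$. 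By Remark \ref{alpharemark}, $\Phi_\e(u)|_X = (\rSO_{U_i,U}\Phi_\e)(u|_{U_i})|_X$ for all $u \in \cD'(U,E)$ and all small $\e$; since $X \subseteq V$ this vanishes once $\e \le \e_0$, so $|_X \circ \Phi_\e = 0$ eventually. As $x$ was arbitrary, $(\Phi_\e)_\e \in \TOvn{U,E}$, which also yields uniqueness of gluings.

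For the existence of gluings, let $(\Phi_{i,\e})_\e \in \TOvl{U_i,E}$ be a coherent family, i.e.\ $\rSO_{U_i\cap U_j,U_i}\Phi_{i,\e} \sim \rSO_{U_i\cap U_j,U_j}\Phi_{j,\e}$ on every intersection. Using paracompactness I would fix a locally finite open cover $(V_k)_k$ of $U$ refining $(U_i)_i$, a subordinate partition of unity $(\chi_k)_k$ with $\supp\chi_k \subseteq V_k$, and for each $k$ an index $i(k)$ with $V_k \subseteq U_{i(k)}$. I then define $\Phi_\e \in \VSO{U,E}$ by
\[ \Phi_\e(u) \coleq \sum_k \chi_k \cdot \Phi_{i(k),\e}(u|_{U_{i(k)}}) \qquad (u \in \cD'(U,E)), \]
where each summand, being supported in the closed set $\supp\chi_k \subseteq U_{i(k)}$, is extended by zero to a smooth section on $U$; local finiteness makes this a well-defined linear continuous operator, exactly as in the proof of Theorem \ref{sk_restriction}.

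It then remains to check that $(\Phi_\e)_\e \in \TOvl{U,E}$ and that $\rSO_{U_j,U}\Phi_\e \sim \Phi_{j,\e}$ for each $j$. For (VSO1), fix a metric $g$ and $x_0 \in U$, and take a relatively compact neighborhood $X$ of $x_0$ meeting only the finitely many supports $\supp\chi_k$, $k \in F$. For $x \in X$ one has $\Phi_\e(u)(x) = \sum_{k \in F}\chi_k(x)\,\Phi_{i(k),\e}(u|_{U_{i(k)}})(x)$, and only indices with $x \in \supp\chi_k \subseteq U_{i(k)}$ contribute. For small $r$ the ball $B^g_r(x)$ lies in $U_{i(k)}$, so $u|_{B^g_r(x)}=0$ forces $(u|_{U_{i(k)}})|_{B^g_r(x)}=0$; applying (VSO1) for $\Phi_{i(k),\e}$ (with $g|_{U_{i(k)}}$, which by Remark \ref{remarkiblah} is harmless), uniformized over the compact set $\overline{X}\cap\supp\chi_k$ and over the finite set $F$, gives $\Phi_\e(u)(x)=0$ eventually, so (VSO1) holds. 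To identify the restriction, fix $x \in U_j$, a relatively compact $X$ with $x \in X$ and $\overline{X}\subseteq U_j$, and $f \in C^\infty(U)$ with $f \equiv 1$ near $\overline{X}$ and $\supp f \subseteq U_j$; then $(\rSO_{U_j,U}\Phi_\e)(v)|_X = \Phi_\e(f\cdot v)|_X$ for small $\e$ by Remark \ref{alpharemark}. For $y \in X \cap \supp\chi_k$ one has $y \in U_{i(k)}\cap U_j$, and since $f \equiv 1$ near $y$ the distribution $(f\cdot v)|_{U_{i(k)}}$ agrees near $y$ with $v$ on $U_{i(k)}\cap U_j$; combining the locality furnished by (VSO1) with the coherence hypothesis (read off locally via Remark \ref{alpharemark} on $U_{i(k)}\cap U_j$) yields $\Phi_{i(k),\e}((f\cdot v)|_{U_{i(k)}})(y) = \Phi_{j,\e}(v)(y)$ for small $\e$. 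Summing against $\sum_k\chi_k(y)=1$ gives $(\rSO_{U_j,U}\Phi_\e)(v)(y)=\Phi_{j,\e}(v)(y)$ for $y$ near $x$ and small $\e$, whence $\rSO_{U_j,U}\Phi_\e \sim \Phi_{j,\e}$.

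The main obstacle is precisely this last verification. The glued operator is assembled from restrictions $u|_{U_{i(k)}}$ and cutoffs, so recovering each $\Phi_{j,\e}$ up to $\sim$ hinges on using that (VSO1) makes $\Phi_{i(k),\e}(w)(y)$ depend only on $w$ near $y$ for small $\e$, and on converting the \emph{locally-and-eventually} coherence on overlaps into genuine pointwise equalities valid for small $\e$; the bookkeeping of which neighborhoods and which $\e_0$ work simultaneously for the finitely many relevant indices is the delicate part. By comparison the separation axiom and the (VSO1)-stability of the glued net are routine.
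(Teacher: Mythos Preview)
Your approach is essentially the same as the paper's: separation via Remark~\ref{alpharemark}, gluing via a partition of unity subordinate to the cover, then verifying (VSO1) and the restriction property. The paper uses a partition of unity directly subordinate to $(U_\lambda)_\lambda$ rather than passing through a refinement, but this is cosmetic.

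Two small points. First, in your (VSO1) verification the phrase ``for small $r$ the ball $B^g_r(x)$ lies in $U_{i(k)}$'' is not what is needed: in (VSO1) the quantifier is $\forall r>0$, so you cannot restrict to small $r$. The correct observation (which the paper makes) is that the metric ball in $U_{i(k)}$ with respect to the restricted metric $g|_{U_{i(k)}}$ is contained in the $U$-ball, $B^{g|_{U_{i(k)}}}_r(x)\subseteq B^g_r(x)$, so $u|_{B^g_r(x)}=0$ forces $(u|_{U_{i(k)}})|_{B^{g|_{U_{i(k)}}}_r(x)}=0$, and (VSO1) for $\Phi_{i(k),\e}$ applies directly.

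Second, the ``delicate bookkeeping'' you flag in the restriction step is exactly what the paper carries out explicitly: rather than invoking (VSO1) and coherence separately, it factors each summand through the $\rSO$ maps on the overlap, obtaining the chain
\[
\Phi^\lambda_\e((f\cdot u)|_{U_\lambda})|_{\carr\chi_\lambda\cap V}
=(\rSO_{U_\lambda\cap U_\mu,U_\lambda}\Phi^\lambda_\e)(u|_{U_\lambda\cap U_\mu})|_{\carr\chi_\lambda\cap V}
=(\rSO_{U_\lambda\cap U_\mu,U_\mu}\Phi^\mu_\e)(u|_{U_\lambda\cap U_\mu})|_{\carr\chi_\lambda\cap V}
=\Phi^\mu_\e(u)|_{\carr\chi_\lambda\cap V},
\]
where the first and last equalities use Remark~\ref{alpharemark} and the middle one is the coherence hypothesis; this makes the $\e_0$'s and neighborhoods entirely concrete.
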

\begin{proof}
Let $U \subseteq M$ be open, $(U_\lambda)_\lambda$ an open cover of $U$ and $[(\Phi_\e)_\e] \in \TOqvl{U,E}$. Supposing that $[ (\Phi_\e)_\e ]|_{U_\lambda} = 0$ for each $\lambda$, we have to show that $[(\Phi_\e)_\e]=0$. Let $x \in U$. Then there is an open neighborhood $W$ of $x$ such that $\overline{W}$ is compact and contained in $U_\lambda$ for some $\lambda$. Then $|_W \circ \Phi_\e = |_W \circ \rSO_{U_\lambda, U} \Phi_\e \circ |_{U_\lambda} = 0$ by small $\e$ by assumption, which gives the claim.

Now let $[(\Phi_\e^\lambda)_\e] \in \TOqvl{U_\lambda,E}$ be given for each $\lambda$, satisfying $[ (\Phi_\e^\lambda)_\e]|_{U_\lambda \cap U_\mu} = [ (\Phi_\e^\mu)_\e]|_{U_\lambda \cap U_\mu}$ $\forall \lambda,\mu$. Let $(\chi_\lambda)_\lambda$ be a partition of unity on $U$ subordinate to $(U_\lambda)_\lambda$.
For each $\e \in I$ we define $\Phi_\e \in \VSO{U,E}$ by
\begin{equation}\label{def_skglue}
 \Phi_\e(u) \coleq \sum_\lambda \chi_\lambda \cdot \Phi^{\lambda}_\e(u|_{U_\lambda}) \qquad (u \in \cD'(U,E)).
\end{equation}
We claim that $(\Phi_\e)_\e$ satisfies (VSO1): fix $x_0 \in U$, a Riemannian metric $g$ on $U$ and an open, relatively compact neighborhood $V$ of $x_0$ intersecting only finitely many $\supp \chi_\lambda$, namely those for $\lambda$ in some finite index set $F$. Given $r>0$, for each $\lambda \in F$ choose $\e_\lambda$ according to (VSO1) such that $\forall x \in \supp \chi_\lambda \cap V$ $\forall \e \le \e_\lambda$ $\forall u \in \cD'(U_\lambda, E)$: ($u|_{B^{g_\lambda}_r(x)}=0 \Rightarrow \Phi^\lambda_\e(u)(x) = 0$). Here, $B^{g_\lambda}_r(x)$ denotes the metric ball at $x$ of radius $r$ in $U_\lambda$ with respect to the restricted metric $g_\lambda \coleq g|_{U_\lambda}$. Then for $\e  \le \min_{\lambda \in F} \e_\lambda$, $x \in V$ and $u \in \cD'(U, E)$ with $u|_{B^g_r(x)} = 0$, for $x \in \supp \chi_\lambda \cap V$ we also have $(u|_{U_\lambda})|_{B^{g_\lambda}_r}(x) = 0$ and hence $\Phi_\e(u)(x) = \sum_{\lambda \in F} \chi_\lambda(x) \Phi^\lambda_\e (u|_{U_\lambda})(x) = 0$.

Now we are going to show that $[(\Phi_\e)_\e]|_{U_\mu} = [(\Phi_\e^\mu)_\e]$. For this we need that $\forall x_0 \in U_\mu$ there is an open neighborhood $V$ of $x_0$ in $U_\mu$ and $\e_0 \in I$ such that
$(\rSO_{U_\mu, U} \Phi_\e)(u)|_V = \Phi^\mu_\e(u)|_V$ $\forall u \in \cD'(U_\mu, E)$ for small $\e$.

First, choose $V \subseteq U$ open such that $\overline{V}$ is compact and contained in $U_\mu$.
Fix $f \in C^\infty(U)$ with $f \equiv 1$ on an open neighborhood of $\overline{V}$ and $\supp f \subseteq U_\mu$. Then for small $\e$, the following identities show the claim:
\begin{align*}
(\rSO_{U_\mu, U}\Phi_\e)(u)|_V & = \Phi_\e(f \cdot u)|_V
= \sum_{\lambda \in F}  \chi_\lambda|_{V} \cdot \Phi^\lambda_\e ( (f \cdot u)|_{U_\lambda})|_{\carr \chi_\lambda \cap V} \label{onestar} \\
& = \sum_{\lambda \in F} \chi_\lambda|_{V} \cdot (\rSO_{U_\lambda \cap U_\mu, U_\lambda} \Phi_\e^\lambda)(u|_{U_\lambda \cap U_\mu})|_{\carr \chi_\lambda \cap V} \\
& = \sum_{\lambda \in F} \chi_\lambda|_{V} \cdot (\rSO_{U_\lambda \cap U_\mu, U_\mu} \Phi^\mu_\e)(u|_{U_\lambda \cap U_\mu})|_{\carr \chi_\lambda \cap V} \\
&= \sum_{\lambda \in F} \chi_\lambda|_V \cdot \Phi^\mu_\e (u)|_{\carr \chi_\lambda \cap V} = \Phi^\mu_\e(u)|_V,
\end{align*}
where we have used Remark \ref{alpharemark} because $\overline{\carr \chi_\lambda \cap V}$ is compact and contained in $U_\lambda \cap U_\mu$.
\end{proof}

The following implies that also $U \mapsto \TOqv{U,E}$ and $U \mapsto \TOqvz{U,E}$ are sheaves:

\begin{proposition}\label{skproploc} Let $U \subseteq M$ open and $(U_\lambda)_\lambda$ an open cover of $U$. Then $(\Phi_\e)_\e \in \TOvl{U,E}$ satisfies (VSO2), (VSO2'), (VSO3), (VSO4) or (VSO4') if and only if each $(\rSO_{U_\lambda, U} \Phi_\e)_\e \in \TOvl{U_\lambda, E}$ does so.
\end{proposition}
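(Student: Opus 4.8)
The plan is to treat all five conditions simultaneously by observing that each is an asymptotic statement ($\e \to 0$) about the single net $(\Phi_\e)_\e$, regarded as a net in one of the three locally convex operator spaces $\Lincb(\cD'(U,E),\cD'(U,E))$, $\Lincb(\Gamma(U,E),\Gamma(U,E))$ or $\VSO{U,E} = \Lincb(\cD'(U,E),\Gamma(U,E))$ (and their analogues over each $U_\lambda$), all carrying the topology of uniform convergence on bounded sets. In each case the condition has the shape ``$\csna(\Phi_\e - T)$ decays at the prescribed rate for all seminorms $\csna$'', where $T = \id$ for (VSO2), (VSO4) and $T=0$ for (VSO2'), (VSO3), (VSO4'), the rate being convergence to $0$ for (VSO2)/(VSO2'), $O(\e^m)$ for all $m$ for (VSO4)/(VSO4'), and $O(\e^{-N})$ for some $N$ for (VSO3). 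A seminorm on such a space has the form $\Phi \mapsto \sup_{u \in B} q(\Phi(u))$ with $B$ bounded in the source and $q$ a continuous seminorm on the target, and by the usual partition-of-unity argument every continuous seminorm $q$ on $\Gamma(U,E)$ or $\cD'(U,E)$ is dominated by a finite sum of seminorms each depending only on the behaviour of its argument on a compact set $K$; refining the atlas to the cover $(U_\lambda)_\lambda$ we may take each such $K$ inside a single $U_\lambda$. This localization of seminorms, together with the continuity (hence bounded-to-bounded property) of the restriction maps $\cD'(U,E) \to \cD'(U_\lambda,E)$, $\Gamma(U,E) \to \Gamma(U_\lambda,E)$ and of the extension maps $v \mapsto f \cdot v$ for $f \in C^\infty(U)$ with $\supp f \subseteq U_\lambda$, is the soft input.

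The geometric input is the pair of localization identities from Remark \ref{alpharemark}: since $(\Phi_\e)_\e \in \TOvl{U,E}$ satisfies (VSO1), for every relatively compact open $X$ with $\overline{X} \subseteq U_\lambda$ and every $f \in C^\infty(U)$ with $\supp f \subseteq U_\lambda$ and $f \equiv 1$ near $\overline{X}$ there is $\e_0 \in I$, \emph{uniform in the argument}, such that for $\e \le \e_0$
\[ \Phi_\e(u)|_X = (\rSO_{U_\lambda,U}\Phi_\e)(u|_{U_\lambda})|_X \quad (u \in \cD'(U,E)), \qquad (\rSO_{U_\lambda,U}\Phi_\e)(v)|_X = \Phi_\e(f \cdot v)|_X \quad (v \in \cD'(U_\lambda,E)), \]
and likewise for smooth sections. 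The key observation is that both identities persist with $\Phi_\e$ replaced by $\Phi_\e - T$ for $T \in \{\id, 0\}$: for $T=0$ this is trivial, and for $T = \id$ it follows because $\id(u)|_X = u|_X = (u|_{U_\lambda})|_X$ and, since $f \equiv 1$ near $X$, $(f \cdot v)|_X = v|_X$, so the reference operators restrict compatibly. This reduces every comparison between $U$ and $U_\lambda$ to an identity of restrictions to $X$, valid for small $\e$ uniformly over any bounded set of arguments.

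For the ``if'' direction, fix a seminorm $\sup_{u \in B} q(\cdot(u))$ over $U$ and write $q \le \sum_{j=1}^k q_j$ with each $q_j$ localized on a compact $K_j \subseteq X_j$, $\overline{X_j} \subseteq U_{\lambda_j}$. Using $X_j$ and the first identity for $\Phi_\e - T$, we obtain $\sup_{u \in B} q_j((\Phi_\e - T)(u)) = \sup_{u \in B} q_j((\rSO_{U_{\lambda_j},U}\Phi_\e - T)(u|_{U_{\lambda_j}}))$ for small $\e$, and the right-hand side is bounded by the localized seminorm of $\rSO_{U_{\lambda_j},U}\Phi_\e - T$ over $U_{\lambda_j}$ built from $q_j$ and the bounded set $\{ u|_{U_{\lambda_j}} : u \in B \}$. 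The hypothesis bounds each such term at the required rate, and summing the finitely many bounds (taking $N = \max_j N_j$ in the case of (VSO3)) gives the estimate over $U$. For the ``only if'' direction the $T=0$ conditions (VSO2'), (VSO3), (VSO4') follow immediately by pulling a seminorm over $U_\lambda$ back through the continuous map $\rSO_{U_\lambda,U}$ of Theorem \ref{sk_restriction}; for the $T = \id$ conditions (VSO2), (VSO4) one uses the second identity for $\Phi_\e - \id$ to rewrite a localized seminorm of $\rSO_{U_\lambda,U}\Phi_\e - \id$ over $U_\lambda$ as the corresponding localized seminorm of $\Phi_\e - \id$ evaluated on the bounded set $\{ f \cdot v : v \in B' \}$ over $U$, and applies the hypothesis.

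The main obstacle is precisely the two convergence-to-$\id$ conditions: since $\id$ is not a vector smoothing operator, one cannot simply apply $\rSO_{U_\lambda,U}$ to $\Phi_\e - \id$, and it is the compatibility of the reference operators under restriction (the observation of the second paragraph) together with the uniformity of $\e_0$ over the bounded argument set — ensuring that the exceptional large-$\e$ range never interferes with the asymptotics — that makes the localization go through. Everything else is the routine bookkeeping of localizing seminorms and combining finitely many estimates.
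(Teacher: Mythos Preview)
Your argument is correct but takes a genuinely different route from the paper. The paper invokes Remark \ref{remarki} \ref{remarki.2} (the Montel/Banach--Steinhaus reduction) to replace each condition by its weak-topology version; after that the verification is entirely pointwise in a single $u$ and, for (VSO2)/(VSO2'), a single test section $\varphi$, so the identities of Remark \ref{alpharemark} apply at once and no uniformity in the argument is needed. You instead work directly in the strong topology, localizing seminorms by a partition of unity and using that the localization identities of Remark \ref{alpharemark} hold for $\e \le \e_0$ uniformly over all arguments. The paper's route is shorter; yours is more self-contained and makes explicit the compatibility of the reference operator $T = \id$ with restriction, which is exactly the issue hidden behind the weak-topology shortcut.

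One small point: your shortcut for the ``only if'' direction with $T=0$ (``pull the seminorm back through the continuous map $\rSO_{U_\lambda,U}$'') is justified by Theorem \ref{sk_restriction} as stated only for (VSO3), since there both source and target are the $\VSO{}$-spaces between which $\rSO_{U_\lambda,U}$ is asserted continuous. For (VSO2') and (VSO4') the relevant operator spaces are $\Lincb(\cD',\cD')$ and $\Lincb(\Gamma,\Gamma)$; you should either note that the explicit formula $(\rSO_{U_\lambda,U}\Phi)(u) = \sum_W \chi_W \cdot \Phi(u \cdot \theta_W)|_{U_\lambda}$ extends and is continuous between those larger spaces (same local-finiteness argument), or simply handle these two cases with the second localization identity exactly as you do for $T=\id$.
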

\begin{proof} \checkedproof \proofs{\#24}
The proof is straightforward; for example, assume that $(\Phi_\e)_\e$ satisfies (VSO2). By Remark \ref{remarki} \ref{remarki.2} it suffices to show weak convergence. Let $u \in \cD'(U_\lambda, E)$ and $\varphi \in \Gamma_c(U_\lambda, E^* \otimes \Vol(M))$. Choose $f \in C^\infty(U)$ such that $f \equiv 1$ on an open neighborhood of $\supp \varphi$ and $\supp f \subseteq U_\lambda$. Then
\[ \langle (\rSO_{U_\lambda,U} \Phi_\e)(u), \varphi) \rangle = \langle \Phi_\e(fu), \varphi \rangle \to \langle fu, \varphi \rangle = \langle u , \varphi \rangle, \]
which means that $(\rSO_{U_\lambda, U} \Phi_\e)_\e$ satisfies (VSO2). For the converse direction a similar calculation using a partition of unity subordinate to $(U_\lambda)_\lambda$ is employed. The other properties are shown analogously.
\end{proof}

\begin{corollary}\label{sk_soft}
Let $W,V,U \subseteq M$ be open such that $\overline{W} \subseteq V \cap U \neq \emptyset$, and $(\Phi_\e)_\e \in \TOv{V,E}$. Then there exists $(\Psi_\e)_\e \in \TOv{U,E}$ such that $(\rSO_{W,U} \Psi_\e)_\e \sim (\rSO_{W,V} \Phi_\e)$. An analogous statements holds for $(\Phi_\e)_\e$ in $\TOvz{V,E}$ or $\TOvl{V,E}$.
\end{corollary}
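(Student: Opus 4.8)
The plan is to realise $(\Psi_\e)_\e$ by interpolating, with a single cutoff function, between the restriction of $\Phi_\e$ near $W$ and a fixed test object on $U$ away from $W$. Note that one cannot simply glue: along the cover $\{V\cap U,\,U\setminus\overline W\}$ of $U$ the restriction of $\Phi_\e$ and the zero net do not agree on the overlap, so Proposition~\ref{sk_sheaf} does not apply directly and a softness-type construction is genuinely needed. Concretely, fix any $(\Theta_\e)_\e\in\TOv{U,E}$; since $\overline W$ is closed in $M$ and contained in the open set $V\cap U$, choose $\chi\in C^\infty(U)$ with $\supp\chi\subseteq V\cap U$ and $\chi\equiv1$ on a neighbourhood of $\overline W$. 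For $u\in\cD'(U,E)$ define
\[ \Psi_\e(u)\coleq \overline{\chi\cdot\Phi_\e(u|_V)} + (1-\chi)\,\Theta_\e(u), \]
where the overline is extension by zero from $V$ to $U$ of the section $\chi|_V\cdot\Phi_\e(u|_V)\in\Gamma(V,E)$, whose support lies in $\supp\chi\subseteq V\cap U$. As $u\mapsto u|_V$, $\Phi_\e$, multiplication by $\chi$ and extension by zero are linear and continuous, $\Psi_\e\in\VSO{U,E}$.

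First I would check that $(\Psi_\e)_\e$ is a test object. Property (VSO1) is local and metric-independent (Remark~\ref{remarkiblah}), and near any point it is inherited from whichever summand is active there, using that $u|_{B^g_r(x)}=0$ forces $(u|_V)|_{B^g_r(x)\cap V}=0$ and that multiplication by $\chi$ and $1-\chi$ does not enlarge supports. For (VSO2) I would invoke Remark~\ref{remarki} \ref{remarki.2} to reduce to weak convergence: from $\Phi_\e(u|_V)\to u|_V$ in $\cD'(V,E)$, $\Theta_\e(u)\to u$ in $\cD'(U,E)$ and the identity $\overline{\chi\cdot u|_V}=\chi u$ one obtains $\Psi_\e(u)\to\chi u+(1-\chi)u=u$. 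Properties (VSO3) and (VSO4) transfer from the corresponding estimates for $\Phi_\e$ and $\Theta_\e$, because $\Psi_\e$ arises from them through the fixed continuous operations above; for (VSO4) one uses in addition $\Phi_\e(s|_V)=s|_V+O(\e^m)$ and $\overline{\chi\cdot s|_V}=\chi s$ for $s\in\Gamma(U,E)$, giving $\Psi_\e(s)=s+O(\e^m)$ rapidly.

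Next I would establish $(\rSO_{W,U}\Psi_\e)_\e\sim(\rSO_{W,V}\Phi_\e)_\e$. Each $x\in W$ has a relatively compact open neighbourhood $X$ with $\overline X\subseteq W$; choose $g'\in C^\infty(U)$ with $\supp g'\subseteq W$ and $g'\equiv1$ near $\overline X$, and put $g\coleq g'|_V$. By Remark~\ref{alpharemark}, for small $\e$ one has $(\rSO_{W,U}\Psi_\e)(v)|_X=\Psi_\e(g'v)|_X$ and $(\rSO_{W,V}\Phi_\e)(v)|_X=\Phi_\e(gv)|_X$. Since $\chi\equiv1$ on $X\subseteq W$ and $(g'v)|_V=g\cdot v$, the first equals $\Phi_\e((g'v)|_V)|_X=\Phi_\e(gv)|_X$, so the two restrictions coincide on $X$ for small $\e$. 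As such $X$ cover $W$, the difference lies in $\TOvn{W,E}$, which is the assertion.

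Finally, for $(\Phi_\e)_\e\in\TOvz{V,E}$ or $\TOvl{V,E}$ I would run the identical construction with $\Theta_\e\coleq0$, so that $\Psi_\e(u)=\overline{\chi\cdot\Phi_\e(u|_V)}$; the verifications of (VSO1), (VSO2'), (VSO3), (VSO4') (respectively of (VSO1) alone) are verbatim the same, now yielding convergence to $0$ in place of $\id$. The step I expect to be the crux is (VSO2)/(VSO2'): ensuring that the cutoff does not destroy convergence to the identity (resp.\ to $0$). This rests on the compatibility $\overline{\chi\cdot u|_V}=\chi u$ together with the reduction to weak convergence; by contrast (VSO1), (VSO3) and (VSO4)/(VSO4') are routine transfers of locality and of polynomial/rapid bounds through fixed continuous maps.
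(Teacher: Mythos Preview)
Your overall strategy---interpolate between $\Phi_\e$ near $W$ and a fixed test object $\Theta_\e$ away from $W$ via a cutoff---is exactly the right idea and is in essence what the paper does. There is, however, a genuine gap in your definition of $\Psi_\e$: for $u\in\cD'(U,E)$ the expression $u|_V$ is only meaningful when $V\subseteq U$, but the statement makes no such assumption (only $\overline W\subseteq V\cap U$). So the formula $\Psi_\e(u)=\overline{\chi\cdot\Phi_\e(u|_V)}+(1-\chi)\Theta_\e(u)$ is undefined for general $u$. The verification of $(\rSO_{W,U}\Psi_\e)_\e\sim(\rSO_{W,V}\Phi_\e)_\e$ happens to go through because there $u=g'v$ has support in $W\subseteq V\cap U$, but this does not rescue the definition itself.

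The fix is easy: pick an auxiliary $\theta\in C^\infty(M)$ with $\supp\theta\subseteq V\cap U$ and $\theta\equiv1$ on a neighbourhood of $\supp\chi$; then $\theta\cdot u|_{V\cap U}$ extends by zero to an element of $\cD'(V,E)$, and you may set $\Psi_\e(u)\coloneqq\overline{\chi\cdot\Phi_\e(\theta u)}+(1-\chi)\Theta_\e(u)$. All your subsequent verifications go through with $\theta u$ in place of $u|_V$, using $\overline{\chi\cdot(\theta u)}=\chi u$ on $U$.

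For comparison, the paper avoids this issue by working entirely with the pre-built restriction maps $\rSO$: it first restricts both $\Phi_\e$ and $\Theta_\e$ to $V\cap U$ via $\rSO_{V\cap U,V}$ and $\rSO_{V\cap U,U}$, forms the affine combination $\chi\cdot[(\Phi_\e)_\e]|_{V\cap U}+(1-\chi)\cdot[(\Theta_\e)_\e]|_{V\cap U}$ there (using that $\TOqv{\,\cdot\,,E}$ is an affine space over a $C^\infty$-module), observes that this agrees with $[(\Theta_\e)_\e]$ on $(V\cap U)\setminus\overline{W'}$, and then invokes the sheaf property (Proposition~\ref{sk_sheaf}) to glue over the cover $\{V\cap U,\,U\setminus\overline{W'}\}$ of $U$. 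This buys the paper the ability to skip the direct re-verification of (VSO1)--(VSO4) for $\Psi_\e$, since those are already packaged into the module and sheaf structure. Your approach, once repaired, is more explicit and self-contained but requires re-checking the test-object axioms by hand.
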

\begin{proof}
 Choose an open neighborhood $W'$ of $\overline{W}$ such that $\overline{W'} \subseteq U \cap V$, any $(\Psi^0_\e)_\e \in \TOv{U,E}$ and $\chi \in C^\infty(U \cap V)$ with $\supp \chi \subseteq W'$ and $\chi \equiv 1$ on $W$. Then $\chi \cdot [(\Phi_\e)_\e]|_{U \cap V} + (1 - \chi) \cdot [(\Psi^0_\e)_\e]|_{U \cap V}$ is an element of $\TOv{U \cap V, E}$; its restriction to $(U \cap V) \cap (U \setminus \overline{W'})$ is given by $[(\Psi^0_\e)_\e]|_{(U \cap V)\setminus \overline{W'}}$. Hence, by Proposition \ref{sk_sheaf} there exists $(\Psi_\e)_\e \in \TOv{U,E}$ such that $[(\Psi_\e)_\e]|_W = [(\Phi_\e)_\e]|_W$.
\end{proof}

Furthermore, we will need the following Lemma later on.

\begin{lemma}\label{sk_cut}
 Let $K,V,U \subseteq M$ be open, $V \subseteq U$ and $\overline{K}$ compact and contained in $V$. Then if one of $(\Phi_\e)_\e \in \TOv{V,E}$ and $(\Psi_\e)_\e \in \TOv{U,E}$ is given the other one can be chosen such that $|_K \circ \Phi_\e \circ |_V = |_K \circ \Psi_\e$ on $\cD'(U,E)$; similarly for the spaces $\STOvz$ and $\STOvl$.
\end{lemma}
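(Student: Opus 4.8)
The plan is to treat the two implications separately, as they are not symmetric: enlarging the domain from $V$ to $U$ can be done by a cutoff and produces an \emph{exact} identity, whereas shrinking from $U$ to $V$ is handled by the restriction operator $\rSO_{V,U}$ of Theorem \ref{sk_restriction} and produces the identity only for small $\e$.

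\emph{Enlarging ($V \rightsquigarrow U$).} Suppose $(\Phi_\e)_\e \in \TOv{V,E}$ is given. I would fix $\chi \in C^\infty(U)$ with $\supp \chi \csub V$ and $\chi \equiv 1$ on an open neighborhood of $\overline{K}$, together with an arbitrary $(\Psi^0_\e)_\e \in \TOv{U,E}$, and set
\[ \Psi_\e(u) \coleq \chi \cdot \Phi_\e(u|_V) + (1-\chi) \cdot \Psi^0_\e(u) \qquad (u \in \cD'(U,E)), \]
where $\chi \cdot \Phi_\e(u|_V) \in \Gamma(U,E)$ is extended by zero off $\supp \chi$ (legitimate since $\supp \chi \csub V$). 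Because $\chi \equiv 1$ near $\overline{K}$ while $(1-\chi)|_K = 0$, one reads off $\Psi_\e(u)|_K = \Phi_\e(u|_V)|_K$, i.e.\ $|_K \circ \Psi_\e = |_K \circ \Phi_\e \circ |_V$ for \emph{every} $\e$. The remaining task is to check $(\Psi_\e)_\e \in \TOv{U,E}$, which I would do from the decomposition $\Psi_\e - \id = m_\chi \circ (\Phi_\e - \id) \circ |_V + m_{1-\chi} \circ (\Psi^0_\e - \id)$ (extension by zero implicit in the first term): the fixed continuous linear maps $m_\chi$, $m_{1-\chi}$, $|_V$ and extension-by-zero send bounded sets to bounded sets and hence transport the seminorm estimates, so (VSO2) and (VSO4) follow termwise from the corresponding properties of $\Phi_\e$ and $\Psi^0_\e$, and likewise (VSO3) from $\Psi_\e = m_\chi \circ \Phi_\e \circ |_V + m_{1-\chi}\circ\Psi^0_\e$. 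Property (VSO1) is local and follows from (VSO1) for $\Phi_\e$ on $\supp \chi \subseteq V$ and for $\Psi^0_\e$ elsewhere. The same formula applies verbatim to $\STOvz$ and $\STOvl$, taking $\Psi^0_\e$ in the respective space.

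\emph{Shrinking ($U \rightsquigarrow V$).} Suppose $(\Psi_\e)_\e \in \TOv{U,E}$ is given. I would simply set $\Phi_\e \coleq \rSO_{V,U}\Psi_\e$. That $(\Phi_\e)_\e \in \TOv{V,E}$ follows from the presheaf property established above (which gives $(\rSO_{V,U}\Psi_\e)_\e \in \TOvl{V,E}$, i.e.\ (VSO1)) together with Proposition \ref{skproploc} applied to the open cover $\{V,U\}$ of $U$, which transfers (VSO2)--(VSO4); the analogous statements cover $\STOvz$ and $\STOvl$. Choosing a relatively compact open $X$ with $\overline{K} \subseteq X$ and $\overline{X} \csub V$, Remark \ref{alpharemark} yields, for all sufficiently small $\e$ and all $u \in \cD'(U,E)$, the identity $\Psi_\e(u)|_X = (\rSO_{V,U}\Psi_\e)(u|_V)|_X$; restricting further to $K \subseteq X$ gives $|_K \circ \Psi_\e = |_K \circ \Phi_\e \circ |_V$ for small $\e$.

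\emph{Main obstacle.} The one genuinely delicate point is that the shrinking direction only yields the identity \emph{eventually}, and this cannot be improved: $\Phi_\e(u|_V)|_K$ depends on $u$ only through $u|_V$, whereas a general test object $\Psi_\e$ on $U$ is only \emph{eventually} local by (VSO1), so for large $\e$ its values on $K$ may depend on $u$ outside $V$. It is exactly the hypothesis $\overline{K} \csub V$, giving a positive distance from $K$ to $\partial V$, that forces the locality radius furnished by (VSO1) to drop below this distance for small $\e$ — which is what Remark \ref{alpharemark} encapsulates. Apart from this, the only real care is in verifying (VSO1) for the glued operator in the enlarging direction, where one must match the metric balls of a given $g$ on $U$ with those of $g|_V$ near $\overline{K}$; this is routine because the two agree for small radii at interior points of $V$.
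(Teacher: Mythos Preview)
Your proof is correct. It differs from the paper's in that you treat the two directions asymmetrically, whereas the paper handles them uniformly: it fixes an intermediate open set $W$ with $\overline{K}\subseteq W\subseteq\overline{W}\subseteq V$, invokes Corollary \ref{sk_soft} (softness of the sheaf $\TOqv{\cdot,E}$) in whichever direction is needed to obtain $(\rSO_{W,U}\Psi_\e)_\e \sim (\rSO_{W,V}\Phi_\e)_\e$, and then reads off the identity on $K$ for small $\e$ via two applications of Remark \ref{alpharemark}. Your enlarging step essentially reproves the content of Corollary \ref{sk_soft} inline, verifying (VSO1)--(VSO4) by hand; this is more work but yields the identity for \emph{all} $\e$ rather than only eventually. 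Your shrinking step via $\Phi_\e \coloneqq \rSO_{V,U}\Psi_\e$ is slightly more direct than the paper's route (which passes through $W$ and the $\sim$-relation). One small wording issue: invoking Proposition \ref{skproploc} ``for the open cover $\{V,U\}$ of $U$'' is formally fine but a bit artificial --- the forward implication there (from $(\Psi_\e)_\e$ to $(\rSO_{V,U}\Psi_\e)_\e$) holds for any open $V\subseteq U$ without needing a genuine cover, as is clear from its proof.
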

\begin{proof}
 Choose $W$ open with $\overline{K} \subseteq W \subseteq \overline{W} \subseteq V$. Then by Corollary \ref{sk_soft} $(\Psi_\e)_\e$ or $(\Phi_\e)_\e$ exists such that $(\rSO_{W,U}\Psi_\e)_\e \sim (\rSO_{W,V} \Phi_\e)_\e$. Then, for $u \in \cD'(U,E)$, $\Phi_\e(u|_V)|_K = (\rSO_{W,V} \Phi_\e)(u|_W)|_K = (\rSO_{W,U} \Psi_\e)(u|_W)|_K = \Psi_\e(u)|_K$ for small $\e$.
\end{proof}

In order to study the local expression of test objects we list the relevant conditions for nets of smoothing operators $(\Phi_\e)_\e \in \SO \Omega^I$ for $\Omega \subseteq \bR^n$ open (cf.\ Section \ref{sec_revscalar} and \cite{papernew}).
\begin{enumerate}[label=(\arabic*)]
\item[(SO1)] is identical to (VSO1).
\item[(SO2)] $\Phi_\e \to \id \textrm{ in }\Lincb(\cD'(\Omega), \cD'(\Omega))$.
\item[(SO2')] $\Phi_\e \to 0 \textrm{ in }\Lincb(\cD'(\Omega), \cD'(\Omega))$.
\item[(SO3)] $\forall \csna \in \csn{ \SO \Omega }\ \exists N \in \bN: \csna (\Phi_\e) = O(\e^{-N})$.
\item[(SO4)] $\forall \csna \in \csn{\Lincb(C^\infty(\Omega), C^\infty(\Omega))}\ \forall m \in \bN: \csna (\Phi_\e|_{C^\infty(\Omega)} - \id ) = O(\e^m)$,
\item[(SO4')] $\forall \csna \in \csn{\Lincb(C^\infty(\Omega), C^\infty(\Omega))}\ \forall m \in \bN: \csna (\Phi_\e|_{C^\infty(\Omega)} ) = O(\e^m)$,
\end{enumerate}

Suppose at first that $E$ is trivializable over a chart $(U, \varphi)$ of $M$. Then according to Section \ref{subsec_smoothop}, $\Phi$ has components $\Phi_{ij} \in \SO U \cong \SO \Omega$ with $\Omega = \varphi(U)$. Conditions (VSO1--4,2',4') then translate into conditions on these components as follows:
\begin{align*}
\Phi\textrm{ satisfies } \textrm{(VSO1)} &\Longleftrightarrow \textrm{each }\Phi_{ij} \textrm{ satisfies (SO1)} \\
\Phi\textrm{ satisfies } \textrm{(VSO2)} &\Longleftrightarrow \Phi_{ij} \textrm{ satisfies (SO2) if } i = j \textrm{ and (SO2') if } i \ne j \\
\Phi\textrm{ satisfies } \textrm{(VSO2')} &\Longleftrightarrow \textrm{each }\Phi_{ij} \textrm{ satisfies (SO2')} \\
\Phi\textrm{ satisfies } \textrm{(VSO3)} &\Longleftrightarrow \textrm{each }\Phi_{ij} \textrm{ satisfies (SO3)} \\
\Phi\textrm{ satisfies } \textrm{(VSO4)} &\Longleftrightarrow \Phi_{ij} \textrm{ satisfies (SO4) if } i = j \textrm{ and (SO4') if } i \ne j \\
\Phi\textrm{ satisfies } \textrm{(VSO4')} &\Longleftrightarrow \textrm{each }\Phi_{ij} \textrm{ satisfies (SO4')}
\end{align*}
and similarly for the uniform variants of these conditions. In conjunction with the sheaf property of the space of test objects, this allows one to easily construct global test objects for the vector case from local scalar test objects as follows: suppose we are given an atlas $(U_\alpha, \varphi_\alpha)$ of $M$ and choose $\Phi^\alpha_{ij} \in \SO {\varphi_\alpha(U_\alpha)}$ satisfying (SO1), (SO2) if $i=j$ and (SO2') if $i \ne j$, (SO3), (SO4) if $i=j$ and (SO4') if $i \ne j$. This defines test objects $\Phi^\alpha \in \TOv{U_\alpha, E}$ which, using Propositions \ref{sk_sheaf} and \ref{skproploc} and a partition of unity, can be glued together to give a test object $\Phi \in \TOv{E}$.

In particular, rewriting the embedding of the special Colombeau algebra in a suitable way one obtains a net of smoothing operators (given by convolution) satisfying (SO1)--(SO4), cf.~\cite[Section 9]{papernew}. In this way one can also obtain properties like (SO2) for other spaces of distributions, e.g.,\ $C^k$-functions for $k < \infty$, because an analogoue of Proposition \ref{skproploc} holds for these properties.

More geometrically, vector smoothing operators may be defined by parallel transport along geodesics (locally in geodesically convex neighborhoods) if there is a background connection avaliable on the manifold (see \cite{rm_diss} for more details). In fact, this is a special case of the combination of so-called transport operators (sections of the external tensor product $E^* \boxtimes E$) and scalar smoothing kernels using the isomorphism \eqref{vecblahiso}; such combinations have been employed more generally in \cite{global2}.

\subsection{Classes of smoothing operators}\label{to_classes}

In certain situations it might be desirable to incorporate specific information about a given problem into the definition of the generalized function spaces used to formulate and solve the problem. Situations where this occurs are for example if the underlying manifold has additional structure like an orientation, symmetries, a background connection etc., or if physical considerations dictate certain properties of the generalized functions considered.

For these reasons it is sensible to consider classes of test objects which satisfy additional properties to those listed above. The present literature on Colombeau algebras supplies plenty of possible variations on test objects (both for full and special algebras) of which we do not even attempt to initiate a systematic study here. Instead, it shall suffice to point out how the choice of test objects is reflected in (i) the sheaf property, and (ii) invariance under diffeomorphisms.

For (i), the sheaf property rests on Proposition \ref{skproploc}, which needs to be valid for any additional properties we might impose on test objects. In other terms, these properties should be defined in terms of a suitable \emph{local} behaviour if we want to obtain a sheaf.

For (ii), invariance of the spaces of generalized functions under diffeomorphisms obviously rests on the corresponding property of the spaces of test objects. This question can of course be posed also for more particular classes of transformations instead of all diffeomorphisms.

One example of how the test objects can be modified according to the situtation is given by the space of generalized tensor fields $\hat\cG^r_s$ of \cite{global2}. There, only tensor bundles $T^r_sM$ of the base manifold $M$ are considered. The isomorphism \eqref{vecblahiso} in conjunction with the fact that a transport operator on $TM$ induces a transport operator on each $T^r_sM$ (cf.\ \cite[Definition 6.3]{global2}) then gives, in principle, rise to a theory of generalized tensor fields having only one space of scalars, in contrast to the general theory which is the scope of this article. However, $\hat\cG^r_s$ lacks the sheaf property and the possibility to define $\Liet_X$ and generalized covariant derivatives. However, the use of transport operators could be combined with the structure outlined in this article to obtain an improved version of $\hat\cG^r_s$ having these properties.

The case of manifolds with given background connections to be used in the embedding of tensorial distributions has been studied in \cite{rm_diss}; in this situation, the embedding has been shown to commute with homotheties and Lie derivatives along Killing vector fields.

We finally remark that due to our functional analytic approach and the detailed study of the sheaf property above, it is comparatively easy to adapt and modify the theory outlined in this article as desired.

\section{The quotient construction}

As is typical for spaces of nonlinear generalized functions in the sense of Colombeau, the basic spaces of Definition \ref{def_basicspace} need to undergo a quotient construction in order to preserve the tensor product of smooth sections. For this purpose one singles out subsets of so-called moderate and negligible elements of $\Gb E\Delta$. The respective definitions are taylored in a way such that they are invariant under the Lie derivatives of Definition \ref{deflieder}. Furthermore, for more generality the tests incorporate uniform sets of test objects (Definition \ref{def_uniform}); this is necessary for the proof of Theorem \ref{nonegder} but will be seen to be equivalent to using single test objects only.

\begin{definition}\label{def_testing}$R \in \Gb E\Delta$ is called \emph{moderate} if $\forall \csna \in \csn{\Gamma(E)}$ $\forall j \in \bN_0$ $\exists N \in \bN$ such that for all uniform subsets $A \subseteq \TOv\Delta$ and $B \subseteq \TOvz\Delta$ we have
 \[ \csna( ( \ud^j R) (\Phi_\e) (\Psi_{1,\e}, \dotsc, \Psi_{j,\e}) ) = O(\e^{-N}) \]
uniformly for $(\Phi_\e)_\e \in A$ and $(\Psi_{i,\e})_\e \in B$ for $i=1 \dotsc j$. The subset of moderate elements of $\Gb E\Delta$ is denoted by $\Gm E\Delta$.

$R \in \Gb E\Delta$ is called \emph{negligible} if $\forall \csna \in \csn{\Gamma(E)}$ $\forall j \in \bN_0$ $\forall m \in \bN_0$ for all uniform subsets $A \subseteq \TOv\Delta$ and $B \subseteq \TOvz\Delta$ we have
 \[ \csna( ( \ud^j R) ( \Phi_{\e}) ( \Psi_{1,\e}, \dotsc, \Psi_{j,\e}) ) = O(\e^m) \]
uniformly for $(\Phi_{\e})_\e \in A$ and $(\Psi_{i,\e})_\e \in B$ for $i=1 \dotsc j$. The subset of negligible elements of $\Gb E\Delta$ is denoted by $\Gn E\Delta$. 
\end{definition}

Note that because $\Gamma(E)$ carries the projective topology with respect to all restrictions $\Gamma(E) \to \Gamma(U, E)$ for $U$ from an open cover of $M$, in practice it suffices to estimate $\csna ( (\ud^j R)(\Phi_\e)(\Psi_{1,\e}, \dotsc, \Psi_{j,\e})|_U )$ for $\csna \in \csn{\Gamma(U, E)}$. In particular, we will often assume $U$ to be relatively compact. Moreover, because the $j$-th differential is symmetric (\cite[5.11]{KM}) one can test with $\Psi_{1,\e} = \dotsc = \Psi_{j,\e}$.

We have the following simplification of these definitions.

\begin{theorem}\label{simplemod}
 $R \in \Gb E\Delta$ is moderate if and only if $\forall \csna \in \csn{\Gamma(E)}$ $\forall j \in \bN_0$ $\exists N \in \bN$ $\forall (\Phi_{\e})_\e \in \TOv\Delta$, $(\Psi_{1,\e})_\e$, $\dotsc$, $(\Psi_{j,\e})_\e \in \TOvz\Delta$ we have
\[ \csna (( \ud^j R)(\Phi_\e)(\Psi_{1,\e},\dotsc,\Psi_{j,\e})) = O(\e^{-N}). \]
Similarly, $R$ is negligible if and only if $\forall \csna \in \csn{\Gamma(E)}$ $\forall j \in \bN_0$ $\forall m \in \bN$ $\forall (\Phi_{\e})_\e \in \TOv\Delta$, $(\Psi_{1,\e})_\e$, $\dotsc$, $(\Psi_{j,\e})_\e \in \TOvz\Delta$ we have
\[ \csna (( \ud^j R)(\Phi_\e)(\Psi_{1,\e},\dotsc,\Psi_{j,\e})) = O(\e^m). \]
\end{theorem}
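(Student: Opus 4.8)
The plan is to prove the two implications of each equivalence separately; the substance of the theorem is that the a priori stronger testing against \emph{uniform} sets in Definition \ref{def_testing} may be reduced to testing against individual test objects.

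For the ``only if'' direction I would first observe that every finite subset of $\TOv\Delta$ (resp.\ $\TOvz\Delta$) is uniform: for a finite family each of (UVSO1)--(UVSO4) (resp.\ their primed analogues) reduces to the corresponding condition (VSO$\ast$) of its members, by intersecting the finitely many neighbourhoods furnished by (UVSO1), taking the minimum of the associated $\e_0$, and replacing the suprema in (UVSO2)--(UVSO4) by maxima over the family. In particular singletons are uniform. Hence, if $R$ is moderate in the sense of Definition \ref{def_testing} with exponent $N$ for a given $\csna$ and $j$, then applying that definition to $A = \{(\Phi_\e)_\e\}$ and $B = \{(\Psi_{1,\e})_\e,\dots,(\Psi_{j,\e})_\e\}$ yields the single-net bound with the same $N$; the negligible case is identical.

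For the converse I would argue by contradiction via an \emph{interlacing} (diagonal) construction. Fix $\csna$ and $j$ and assume the single-net moderateness condition, which supplies an $N$; I claim the uniform bound holds with this same $N$. If not, there are uniform sets $A \subseteq \TOv\Delta$, $B \subseteq \TOvz\Delta$ for which $O(\e^{-N})$ fails uniformly. Unwinding the quantifiers (failure means witnesses exist for arbitrarily large constants at arbitrarily small $\e$), I recursively choose $\e_k < \min(\e_{k-1},1/k)$ together with $(\Phi^k_\e)_\e \in A$ and $(\Psi^k_{i,\e})_\e \in B$ for $i=1,\dots,j$ with
\[ \csna\bigl( (\ud^j R)(\Phi^k_{\e_k})(\Psi^k_{1,\e_k}, \dots, \Psi^k_{j,\e_k}) \bigr) > k\, \e_k^{-N}. \]
I then glue these along the sequence, componentwise over $G \in \Delta$: set $\Phi_\e \coleq \Phi^k_\e$ for $\e \in (\e_{k+1},\e_k]$, $\Phi_\e \coleq \Phi^1_\e$ for $\e > \e_1$, and likewise $\Psi_{i,\e} \coleq \Psi^k_{i,\e}$ on $(\e_{k+1},\e_k]$.

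The crucial point, and the main obstacle, is to check that $(\Phi_\e)_\e$ is again a genuine test object and each $(\Psi_{i,\e})_\e$ a genuine $0$-test object; this is exactly where uniformity of $A$ and $B$ is used. Since at each $\e$ the glued net coincides with a member of $A$ (resp.\ $B$), the pointwise growth, the convergences and the local vanishing all descend from the uniform conditions to the glued net: for instance $\csnb(\Phi_\e) \le \sup_{(\Theta_\e)_\e \in A}\csnb(\Theta_\e) = O(\e^{-N'})$ for every $\csnb \in \csn{\VSO E}$ by (UVSO3), $\csnb(\Phi_\e|_{\Gamma(E)} - \id) \to 0$ rapidly by (UVSO4), convergence in $\Lincb(\cD'(M,E),\cD'(M,E))$ holds by (UVSO2), and the single neighbourhood $V$ and threshold $\e_0$ produced by (UVSO1) serve the glued net verbatim; the $0$-versions are handled by (UVSO2$'$), (UVSO4$'$). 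Evaluating along $\e = \e_k$ then gives $\Phi_{\e_k} = \Phi^k_{\e_k}$ and $\Psi_{i,\e_k} = \Psi^k_{i,\e_k}$, so that $\csna((\ud^j R)(\Phi_{\e_k})(\Psi_{1,\e_k},\dots,\Psi_{j,\e_k})) > k\,\e_k^{-N}$, which is not $O(\e^{-N})$ and contradicts the single-net condition for this $\csna$, $j$ and $N$. Thus the uniform bound holds with $N$. The negligible case is entirely parallel: one fixes $\csna$, $j$ and $m$, assumes the single-net $O(\e^m)$ bound, and from a hypothetical failure of the uniform $O(\e^m)$ bound extracts a sequence with $\csna(\dots) > k\,\e_k^m$ and interlaces exactly as above to reach a contradiction.
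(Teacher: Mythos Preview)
Your proposal is correct and follows essentially the same route as the paper: the nontrivial direction is handled by contradiction via the same interlacing construction, piecing together witnesses along a decreasing sequence $(\e_k)_k$ and invoking uniformity of $A$ and $B$ to conclude that the glued nets lie in $\TOv\Delta$ and $\TOvz\Delta$. You supply somewhat more detail (the explicit verification that finite sets are uniform for the easy direction, and the check of each (UVSO$\ast$) for the glued net), but the argument is the same.
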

\begin{proof}
Assuming that the condition for moderateness holds, suppose $R$ is not moderate in the sense of Definition \ref{def_testing}. This means that there exist $\csna \in \csn{\Gamma(E)}$ and $j \in \bN_0$ such that for $N \in \bN$ obtained from the assumption for this $\csna$ and $j$, there are uniform subsets $A \subseteq \TOv\Delta$ and $B \subseteq \TOvz\Delta$ such that
\begin{multline*}
 \forall C>0\ \forall \e_0 \in I\ \exists \eta \le \e_0\ \exists (\Phi_\e)_\e \in A\ \exists (\Psi_{i,\e})_\e \in B\ (i=1\dotsc j): \\
\csna((\ud ^j R)(\Phi_\eta) ( \Psi_{1,\eta},\dotsc,\Psi_{j,\eta})) > C \cdot \eta^{-N}.
\end{multline*}
From this we obtain a strictly decreasing sequence $(\e_n)_n \searrow 0$ and sequences $((\Phi^n_\e)_\e)_n$ in $A$, $((\Psi^n_{i,\e})_\e)_n$ in $B$ $(i=1 \dotsc j)$ such that
\begin{equation}\label{bulbber}
\csna((\ud^jR)(\Phi_{\e_n}^n)(\Psi^n_{1,\e_n},\dotsc,\Psi_{j,\e_n}^n))>n \cdot \e_n^{-N} \qquad (n \in \bN).
\end{equation}
Choose any $(\widetilde \Phi_\e)_\e \in \TOv\Delta$ and $(\widetilde \Psi_{i,\e})_\e \in \TOvz\Delta$ such that $\widetilde \Phi_{\e_n} = \Phi^n_{\e_n}$ and $\widetilde \Psi_{i,\e_n} = \Psi^n_{i,\e_n}$ for all $n$, which is possible because $A$ and $B$ are uniform. \proofs{\#25}
For example, set $\widetilde\Phi_\e \coleq \Phi_\e^n$ and $\widetilde\Psi_{i,\e} \coleq \Psi^n_{i,\e}$ for $\e_{n+1} < \e \le \e_n$ with $\e_0 \coleq 1$.
By assumption we then have
 \[ \exists C>0\ \exists \e_0\ \forall \e\le \e_0: \csna((\ud^j R)(\widetilde \Phi_\e)(\widetilde \Psi_{1,\e},\dotsc,\widetilde \Psi_{j,\e})) \le C \cdot \e^{-N}. \]
For $n \in \bN$ such that $n \ge C$ and $\e_n \le \e_0$ this gives a contradiction to \eqref{bulbber}, hence $R$ must be moderate.
The proof for negligibility goes analogously.
\end{proof}

A classical result in Colombeau algebras (\cite[Theorem 1.2.3]{GKOS}) states that negligibility of moderate elements can be tested without resorting to derivatives. This is true also in our setting:

\begin{theorem}\label{nonegder}$R \in \Gm E\Delta$ is negligible if $\forall K \csub M$ $\forall m \in \bN$ $\forall (\Phi_{\e})_{\e} \in \TOv\Delta$:
\begin{equation}\label{alphaa}
\sup_{x \in K} \norm { R ( \Phi_{\e} ) } = O(\e^m)
\end{equation}
where $\norm{\cdot}$ is the norm on $\Gamma(E)$ induced by any Riemannian metric on $E$.
\end{theorem}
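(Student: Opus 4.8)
The plan is to derive the full negligibility estimate of Definition \ref{def_testing} from the zeroth-order hypothesis \eqref{alphaa} in two separate reductions: one removing the spatial derivatives hidden in a general seminorm, the other climbing in the functional differentiation order. By Theorem \ref{simplemod} and the symmetry of $\ud^j R$, it suffices to show $\csna(\ud^j R(\Phi_\e)(\Psi_\e, \dotsc, \Psi_\e)) = O(\e^m)$ for single $(\Phi_\e)_\e \in \TOv\Delta$, $(\Psi_\e)_\e \in \TOvz\Delta$ and all $\csna$, $j$, $m$. Since $\Gamma(E)$ carries the projective topology with respect to restrictions to relatively compact trivializing charts, I may assume $\csna$ is of the form $\sup_{x \in K,\ \abso\alpha \le l}$ applied to the local coordinates of a section, with $\overline K$ compact; the hypothesis \eqref{alphaa} controls exactly the case $l = 0$, $j = 0$.

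First I would remove the spatial derivatives at $j=0$, showing that for \emph{every} test object $(\Theta_\e)_\e \in \TOv\Delta$ one already has $\csna(R(\Theta_\e)) = O(\e^m)$ for all $\csna$ and $m$. Working in local coordinates, moderateness of $R$ (with $j=0$) bounds all spatial derivatives up to some high order $L$ on a slightly larger compact set $K'$ by $O(\e^{-N})$, while the hypothesis bounds the sup-norm over $K'$ by $O(\e^m)$ for every $m$. A Landau--Kolmogorov type interpolation inequality, estimating an intermediate derivative seminorm by a product of a positive power of the sup-norm and a power of the order-$L$ seminorm, then yields $\sup_{x\in K,\ \abso\alpha\le l}\abso{\pd^\alpha(\dotsb)} = O(\e^{m(1-\theta) - N\theta})$ for a fixed $\theta \in (0,1)$; letting $m \to \infty$ gives decay faster than every power of $\e$.

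Second I would climb in $j$. Fix $\csna$, $j \ge 1$ and a target $m$, and reconstruct the functional derivative from zeroth-order values of $R$ along the ray $\lambda \mapsto \Phi_\e + \lambda \Psi_\e$. As $\TOv\Delta$ is affine over $\TOvz\Delta$, each $(\Phi_\e + \lambda\Psi_\e)_\e$ is again a test object for bounded $\lambda$, and $g(\lambda) \coleq R(\Phi_\e + \lambda\Psi_\e)$ is a smooth $\Gamma(E)$-valued map with $g^{(k)}(0) = \ud^k R(\Phi_\e)(\Psi_\e, \dotsc, \Psi_\e)$. Combining the $j$-th forward difference $\Delta_s^j g(0) = \sum_{k=0}^j (-1)^{j-k}\binom{j}{k} g(ks)$ with Taylor's formula and the identity $\sum_k (-1)^{j-k}\binom{j}{k}k^l = j!\,\delta_{lj}$ ($0 \le l \le j$) gives
\[
\csna(g^{(j)}(0)) \le \frac{1}{s^j}\,\csna(\Delta_s^j g(0)) + C_j\, s \sup_{\lambda \in [0,js]} \csna(g^{(j+1)}(\lambda)).
\]
I would then choose the step $s = \e^{N+m}$, with $N$ the moderateness exponent for $\csna$ and order $j+1$. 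The difference term is a fixed linear combination of $R$ evaluated at the test objects $(\Phi_\e + k\e^{N+m}\Psi_\e)_\e$, each $O(\e^{m'})$ for every $m'$ by the first reduction, so $m' = j(N+m)+m$ makes $s^{-j}\csna(\Delta_s^j g(0)) = O(\e^m)$. For the remainder I invoke moderateness over the set $\{(\Phi_\e + \lambda\Psi_\e)_\e : \lambda \in [0, js]\}$, a \emph{uniform} family of test objects, obtaining $\sup_\lambda \csna(g^{(j+1)}(\lambda)) = O(\e^{-N})$ with one $N$; hence the remainder is $C_j\,\e^{N+m}O(\e^{-N}) = O(\e^m)$, and $\csna(g^{(j)}(0)) = O(\e^m)$.

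The main obstacle is the remainder in this reconstruction: to beat the moderate growth $O(\e^{-N})$ with the $\e$-dependent step I need the exponent $N$ to be \emph{independent of the intermediate point} $\lambda \in [0,js]$, and this uniformity over a continuum of base test objects is exactly what the uniform sets of Definition \ref{def_uniform} are designed to supply; without them one would only get a pointwise-in-$\lambda$ bound with no control on how $N$ varies, and the argument would collapse. A secondary, routine point is to verify that $\{(\Phi_\e + \lambda\Psi_\e)_\e : \lambda \in [0,js]\}$ indeed satisfies (UVSO1)--(UVSO4), which follows from (VSO1)--(VSO4) for $(\Phi_\e)_\e$ and (VSO2'), (VSO4') for $(\Psi_\e)_\e$ because $\lambda$ ranges over a fixed bounded interval.
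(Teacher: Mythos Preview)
Your proof is correct and follows essentially the same two-reduction strategy as the paper: bootstrap spatial derivatives at $j=0$, then climb in the functional order $j$, in each case controlling a Taylor remainder via the \emph{uniform} moderateness bound. The paper runs the two reductions in the opposite expository order and uses the same explicit Taylor-with-step-$\e^{m+N}$ device for both (inductively $j_0 \to j_0+1$ in the functional slot, $\alpha_0 \to \alpha_0 + e_i$ in the $x$-slot), whereas you package the spatial step as a Landau--Kolmogorov interpolation and do the $j$-step in one shot via higher finite differences; these are minor technical variants of the same idea, and the crucial use of uniform test-object families for the remainder bound is identical. One wording fix: the uniform family in your remainder estimate should be indexed by $t$ in the fixed interval $[0,j]$, not $[0,js]$ (since $s=\e^{N+m}$ is $\e$-dependent and a set of \emph{nets} needs an $\e$-independent parameter set); your closing sentence shows you already have the right set in mind.
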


\begin{proof}
Suppose $R \in \Gm E\Delta$ satisfies the negligibility test of Theorem \ref{simplemod} for $j=j_0 \in \bN_0$. Testing for negligibility of $R$ with $j = j_0 + 1$, fix $\csna \in \csn{\Gamma(E)}$ and $m \in \bN$. Then by moderateness of $R$ there exists $N \in \bN$ such that for all $(\Phi_\e)_\e \in A$ and $(\Psi_{1,\e})_\e, \dotsc, (\Psi_{j_0+2, \e})_\e \in B$ with $A \subseteq \TOv\Delta$ and $B \subseteq \TOvz\Delta$ uniform we have
\[ \csna((\ud^{j_0+2} R)(\Phi_\e)(\Psi_{1,\e}, \dotsc, \Psi_{j_0+2,\e})) = O(\e^{-N}). \]
At the same time, by assumption we have
\[ \csna ( (\ud^{j_0} R)(\Phi_\e)(\Psi_{1,\e}, \dotsc, \Psi_{j_0, \e})) = O(\e^{2m + N}). \]
In order to estimate $\csna((\ud^{j_0+1} R)(\Phi_\e)(\Psi_{1,\e}, \dotsc, \Psi_{j_0+1, \e}))$ for given $(\Phi_\e)_\e$, $(\Psi_{1,\e})_\e$, $\dotsc$, $(\Psi_{j_0+1, e})_\e$ we use the fact that
\[
(\ud^{j_0} R)(\Phi_\e + \e^{m + N}\Psi_{j_0+1,\e})(\Psi_{1,\e}, \dotsc, \Psi_{j_0, \e})
\] equals
\begin{multline*}
(\ud^{j_0} R)(\Phi_\e)(\Psi_{1,\e}, \dotsc, \Psi_{j_0, \e}) + \e^{m + N} (\ud^{j_0+1} R)(\Phi_\e)(\Psi_{1,\e}, \dotsc, \Psi_{j_0, \e}, \Psi_{j_0+1, \e}) \\
+ \e^{2m + 2N} \int_0^1 (1-t)(\ud^{j_0 + 2} R)(\Phi_\e + t \e^{m + N} \Psi_{j_0 + 1,\e}) \\
\cdot (\Psi_{1,\e}, \dotsc, \Psi_{j_0,\e}, \Psi_{j_0+1,\e}, \Psi_{j_0+1,\e})\,\ud t.
\end{multline*}
As $\{ (\Phi_\e + t \e^{m + N} \Psi_{j_0+1, \e})_\e \ |\ t \in (0,1) \}$ is a uniform set of test objects, $\csna((\ud^{j_0+1} R)(\Phi_\e)(\Psi_{1,\e}, \dotsc, \Psi_{j_0+1, \e})) = O(\e^{m})$ follows from the above. Inductively, we see that $R \in \Gm E \Delta$ is negligible already if the test of Theorem \ref{simplemod} holds for $j=0$.

Because $\Gamma(E)$ carries the projective topology with respect to all mappings $\pr_i \circ |_U: \Gamma(E) \to \Gamma(U, E) \cong C^\infty(U)^k \to C^\infty(U)$ with $\dim E = k$ for charts $(U, \varphi)$ of $M$ such that $E|_U$ is trivial, we may in fact assume that $R \in C^\infty(\VSO \Delta, C^\infty(U))$ for some chart $(U, \varphi)$. Because $C^\infty(U) \cong C^\infty(\varphi(U))$, our claim is established if we can show the local estimate
\begin{multline}\label{equazione}
\forall K \csub \varphi(U)\ \forall \alpha \in \bN_0^n\ \forall m \in \bN\ \forall (\Phi_\e)_\e \in \TOv\Delta:\\
\sup_{x \in K} \abso{ \pd_x^\alpha ( R ( \Phi_\e ) \circ \varphi^{-1})(x)} = O (\e^m).
\end{multline}
Fix $K$ and $m$. As above, the assumption that $R$ is moderate and satisfies \eqref{alphaa} translates into
\begin{align*}
 \exists N\ &\forall (\Phi_\e)_\e: \sup_{x \in K} \abso{\pd_x^{\alpha_0 + 2 e_i} ( R ( \Phi_\e) \circ \varphi^{-1})(x)} = O(\e^{-N}) \\
& \forall (\Phi_\e)_\e: \sup_{x \in K} \abso{ \pd_x^{\alpha_0} ( R ( \Phi_\e) \circ \varphi^{-1})(x)} = O(\e^{2 m + N} )
\end{align*}
for $\alpha_0 = 0$. For small $\e$ and $x \in K$, we have the expansion
\begin{multline*}
 \pd_x^{\alpha_0} ( R( \Phi_\e) \circ \varphi^{-1})(x + \e^{m + N}e_i) \\
= \pd_x^{\alpha_0} ( R(\Phi_\e) \circ \varphi^{-1}) (x) + \e^{m + N} \pd_x^{\alpha_0 + e_i} ( R(\Phi_\e)\circ \varphi^{-1})(x) \\
+ \e^{2m + 2N} \int_0^1 (1-t) \pd_x^{\alpha_0 + 2 e_i}(R(\Phi_\e) \circ \varphi^{-1} )(x + t \e^{m + N} e_i) \, \ud t
\end{multline*}
where $e_i$ is the $i$-th Euclidean basis vector of $\bR^n$. From this $\pd_x^{\alpha_0 + e_i} ( R(\Phi_\e) \circ \varphi^{-1}) = O(\e^{m})$ on $K$ follows, which gives \eqref{equazione} by induction and hence shows the claim.
\end{proof}

We will now combine the basic space and test objects in order to define the Colombeau quotient. Although the following theorem is a cornerstone of the construction its proof is a trivial consequence of the definitions, which shows that our functional analytic approach indeed is very natural.
\begin{theorem}
 \begin{enumerate}[label=(\roman*)]
  \item $\iota(\cD'(M,E)) \subseteq \Gm E{\{E\}}$.
  \item $(\iota - \id)(\Gamma(E)) \subseteq \Gn E{\{E\}}$. Hence, for $s \in \Gamma(E)$ and $t \in \Gamma(F)$, $\iota(s) \otimes \iota(t) - \iota(s \otimes t) \in \Gn E \Delta$, with $\Delta = \{ E, F, E \otimes F \}$.
  \item $\iota(\cD'(M,E)) \cap \Gn E{\{E\}} = \{ 0 \}$.
 \end{enumerate}
\end{theorem}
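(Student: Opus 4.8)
The plan is to exploit the fact that each embedded object is \emph{affine} in the smoothing operator, so that only the values of order $0$ and $1$ survive, and then to read each of the three claims directly off the corresponding defining property of test objects: (VSO3) for moderateness, (VSO4)/(VSO4') for the comparison with $\id$, and (VSO2) for injectivity. Throughout I use the simplified tests of Theorem \ref{simplemod}, so that for $\Delta=\{E\}$ it suffices to estimate $\csna(\ud^j R(\Phi_\e)(\Psi_{1,\e},\dots,\Psi_{j,\e}))$ for a single test object $(\Phi_\e)_\e\in\TOv E$ and $0$-test objects $(\Psi_{i,\e})_\e\in\TOvz E$. The key structural observation is the derivative structure: since $R\coleq\iota u\colon\Phi\mapsto\Phi(u)$ is $\bK$-linear, $\ud R(\Phi)(\Psi)=\Psi(u)$ and $\ud^j R=0$ for $j\ge 2$; likewise $(\iota-\id)(s)\colon\Phi\mapsto\Phi(s)-s$ is affine, with the same first derivative $\Psi\mapsto\Psi(s)$ and vanishing higher derivatives. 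Hence every test quantity in Definition \ref{def_testing} reduces to one of $\csna(\Phi_\e(u))$, $\csna(\Psi_\e(u))$, $\csna(\Phi_\e(s)-s)$, $\csna(\Psi_\e(s))$.

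For (i), fixing $u$, the map $\Phi\mapsto\csna(\Phi(u))$ is a continuous seminorm on $\VSO E$ (evaluation at the bounded set $\{u\}$ followed by $\csna$), so (VSO3) gives $\csna(\Phi_\e(u))=O(\e^{-N})$ for some $N$; the same bound holds for $\Psi_\e$ because $0$-test objects also satisfy (VSO3). Thus $\iota u$ is moderate. For the first assertion of (ii), $\Phi\mapsto\csna((\Phi|_{\Gamma(E)}-\id)(s))$ is, for fixed $s$, a continuous seminorm on $\Lincb(\Gamma(E),\Gamma(E))$, whence (VSO4) yields $\csna(\Phi_\e(s)-s)=O(\e^m)$ for every $m$, while the $j=1$ term $\csna(\Psi_\e(s))$ is $O(\e^m)$ by (VSO4'). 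Hence $(\iota-\id)(s)\in\Gn E{\{E\}}$.

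For the tensor-product consequence, write $\iota s=\id(s)+n_s$ with $n_s\coleq(\iota-\id)(s)\in\Gn E{\{E\}}\subseteq\Gn E\Delta$ (negligibility is inherited under the inclusion $\Delta_1\subseteq\Delta_2$, since the extra derivative directions contribute nothing), and similarly for $t$ and $s\otimes t$; note $\id(s)\otimes\id(t)=\id(s\otimes t)$. Expanding the $C^\infty(M)$-bilinear tensor product of Definition \ref{basop} gives
\[
 \iota s\otimes\iota t-\iota(s\otimes t)=n_s\otimes\id(t)+\id(s)\otimes n_t+n_s\otimes n_t-n_{s\otimes t}.
\]
Each summand is negligible: the smooth sections $\id(s),\id(t)$ are moderate, $n_s,n_t,n_{s\otimes t}$ are negligible, and the Leibniz rule expands $\ud^j(\cdot\otimes\cdot)$ as a finite sum of tensor products $\ud^k(\cdot)\otimes\ud^{j-k}(\cdot)$; estimating each factor (negligible $O(\e^m)$, moderate $O(\e^{-N})$) and using that continuous seminorms of a tensor product are dominated by products of continuous seminorms of the factors shows that the product of a negligible with a moderate (or of two negligibles) generalized section is again negligible. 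Hence the difference lies in $\Gn{E\otimes F}\Delta$.

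Finally, (iii) carries the genuine content. Pick any test object $(\Phi_\e)_\e\in\TOv E$ (such exist by the gluing construction following Proposition \ref{skproploc}). If $u\in\cD'(M,E)$ with $\iota u\in\Gn E{\{E\}}$, then the $j=0$ test gives $\csna(\Phi_\e(u))=O(\e^m)$ for every seminorm $\csna$ and every $m$, so $\Phi_\e(u)\to 0$ in $\Gamma(E)$ and hence, by continuity of the inclusion $\Gamma(E)\hookrightarrow\cD'(M,E)$, also in $\cD'(M,E)$. On the other hand (VSO2) says $\Phi_\e\to\id$ in $\Lincb(\cD'(M,E),\cD'(M,E))$, so in particular $\Phi_\e(u)\to u$ in $\cD'(M,E)$. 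Since $\cD'(M,E)$ is Hausdorff, uniqueness of limits forces $u=0$; as $0$ is clearly negligible, the intersection equals $\{0\}$. I expect this last step to be the main obstacle: everything else is immediate bookkeeping driven by (VSO3)/(VSO4), whereas (iii) requires reconciling two distinct convergences — the negligibility-driven convergence to $0$ in the fine topology of $\Gamma(E)$ and the (VSO2)-driven convergence to $u$ in the coarse topology of $\cD'(M,E)$ — which is precisely what makes $\iota$ injective on the quotient.
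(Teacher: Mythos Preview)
Your proof is correct and matches the paper's approach: the paper states only that the theorem is ``a trivial consequence of the definitions'' and gives no further details. What you have written is exactly the natural unpacking of that remark --- reading (i) off (VSO3), (ii) off (VSO4)/(VSO4'), and (iii) off (VSO2) together with Hausdorffness of $\cD'(M,E)$ --- and your handling of the tensor-product clause (including the stability of negligibility under enlarging $\Delta$ and under tensoring with a moderate factor) anticipates Proposition~\ref{modalgprop}~(ii), which the paper proves immediately afterwards.
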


The following is easily seen from the definitions.
\begin{proposition}\label{modalgprop}
\begin{enumerate}[label=(\roman*)]
 \item $\Gn E\Delta$ is a $C^\infty(M)$-submodule of $\Gm E\Delta$, which itself is a $C^\infty(M)$-submodule of $\Gb E\Delta$.
\item The tensor product \eqref{genmultdef} maps $\Gm E{\Delta} \times \Gm F{\Delta}$ into $\Gm{E \otimes F}{\Delta}$. If one of the factors is negligible their tensor product is so.
 \item \label{modalgprop.3} The isomorphisms of Theorem \ref{reprmod} preserve moderateness and negligibility, i.e., a generalized section is moderate or negligible if and only if all its components are:
\begin{gather*}
\Gm E\Delta \cong \Gm M\Delta \otimes_{C^\infty(M)} \Gamma(E) \cong \Hom_{C^\infty(M)}(\Gamma(E^*), \Gm M\Delta) \\
\Gn E\Delta \cong \Gn M\Delta \otimes_{C^\infty(M)} \Gamma(E) \cong \Hom_{C^\infty(M)}(\Gamma(E^*), \Gn M\Delta)
\end{gather*}
\item $\Lieh_X$ and $\Liet_X$ preserve moderateness and negligiblity:
\begin{gather*}
 \Lieh_X \Gm E\Delta \subseteq \Gm E\Delta, \qquad \Lieh_X \Gn E\Delta \subseteq \Gn E\Delta, \\
 \Liet_X \Gm E\Delta \subseteq \Gm E\Delta, \qquad \Liet_X \Gn E\Delta \subseteq \Gn E\Delta.
\end{gather*}
\item Vector bundle isomorphisms preserve moderateness and negligibility:
\[ \mu_*(\Gm E\Delta) \subseteq \Gm {E'}{\Delta'},\qquad \mu_*(\Gn E\Delta) \subseteq \Gn {E'}{\Delta'}, \]
where $\mu = \{\mu_G\colon G \to G'\}_{G \in \Delta \cup \{E\}}$ is a family of vector bundle isomorphisms over the same diffeomorphism and $\Delta' = \{ G' \}_{G \in \Delta}$.
\end{enumerate}
\end{proposition}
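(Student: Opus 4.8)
All five assertions rest on a single principle: the defining estimates for $\Gm E\Delta$ and $\Gn E\Delta$ (Definition \ref{def_testing}, as simplified by Theorem \ref{simplemod}) are stable under three elementary operations on representatives $R\in\Gb E\Delta$, namely post-composing with a fixed continuous linear map $\Gamma(E)\to\Gamma(F)$, combining two representatives by a continuous bilinear map, and reparametrising the argument $\Phi$ by a continuous linear map that respects the classes of (0-)test objects. In each case one uses that $\ud^j$ commutes with continuous linear maps, obeys the Leibniz rule for continuous bilinear maps, and that a continuous seminorm composed with a continuous (multi)linear map is again a continuous seminorm, so every Landau estimate transfers. For part~(i), negligibility implies moderateness because $O(\e^m)$ for all $m$ is in particular $O(\e^{-N})$; closure under sums is immediate from linearity of $\ud^j$ and additivity of the $O$-estimates; and for $f\in C^\infty(M)$ one has $(f\cdot R)(\Phi)=m_f(R(\Phi))$, so $\ud^j(f\cdot R)(\Phi)(\Psi_{1,\e},\dots,\Psi_{j,\e})=m_f\bigl(\ud^j R(\Phi)(\Psi_{1,\e},\dots,\Psi_{j,\e})\bigr)$ and $\csna\circ m_f$ is again a continuous seminorm on $\Gamma(E)$.

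For part~(ii) I would apply the Leibniz rule to the continuous bilinear tensor product $\otimes\colon\Gamma(E)\times\Gamma(F)\to\Gamma(E\otimes F)$: the quantity $\ud^j(R\otimes S)(\Phi)(\Psi_1,\dots,\Psi_j)$ is a finite sum of terms $\ud^k R(\Phi)(\dots)\otimes\ud^{j-k}S(\Phi)(\dots)$, and continuity of $\otimes$ furnishes, for each $\csna$, seminorms $\csnb,\csnc$ with $\csna(a\otimes b)\le\csnb(a)\,\csnc(b)$; a product $O(\e^{-N_1})\cdot O(\e^{-N_2})$ is then $O(\e^{-N_1-N_2})$, giving moderateness, while if $R$ is negligible the factor $O(\e^{m})$ (for all $m$) against the moderate factor $O(\e^{-N})$ yields negligibility of $R\otimes S$. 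For part~(iii) I would pass to a chart $(U,\varphi)$ over which $E$ is trivial, with local frame $(b_i)_i$ and dual coframe $(\beta^i)_i$, and write $R=\sum_i R^i b_i$ with $R^i=R\cdot\beta^i\in\Gb M\Delta$. Since $\Gamma(E)$ carries the projective topology with respect to the local component maps (as already used in the proof of Theorem \ref{nonegder}), and since contraction with the fixed $\beta^i$ and multiplication by the fixed $b_i$ are continuous linear maps commuting with $\ud^j$, the estimate for $\ud^j R$ holds if and only if it holds for each $\ud^j R^i$; this is exactly the assertion that the isomorphisms of Theorem \ref{reprmod} preserve both moderateness and negligibility.

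The only substantive point is part~(iv). Here $\Liet_X R=\Lie_X\circ R$ with $\Lie_X$ a fixed continuous endomorphism of $\Gamma(E)$, so this case is handled exactly as scalar multiplication in part~(i). For $\Lieh_X$, its second term $\Phi\mapsto\ud R(\Phi)(\Lso_X\Phi)$ depends on $\Phi$ through the continuous linear map $L\coleq\Lso_X$, and the chain and Leibniz rules give
\begin{multline*}
\ud^j\bigl(\Phi\mapsto\ud R(\Phi)(L\Phi)\bigr)(\Psi_1,\dots,\Psi_j) \\
= \ud^{j+1}R(\Phi)(L\Phi,\Psi_1,\dots,\Psi_j)+\sum_{i=1}^{j}\ud^j R(\Phi)(\Psi_1,\dots,L\Psi_i,\dots,\Psi_j).
\end{multline*}
The decisive input is Lemma \ref{diffeoinv}\,(i): $\Lso_X$ maps $\TOv\Delta$ into $\TOvz\Delta$ and $\TOvz\Delta$ into itself, preserving uniformity, so that $L\Phi$ and each $L\Psi_i$ are again admissible $0$-test objects. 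Every summand is therefore of a form controlled by the moderateness resp.\ negligibility of $R$, the harmless increase of the differentiation order from $j$ to $j+1$ being absorbed because those conditions quantify over all $j$; this is precisely why $0$-test objects were introduced.

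Finally, for part~(v) I would write $\mu_* R=(\mu_E)_*\circ R\circ\mu^*$, where $(\mu_E)_*$ is a continuous isomorphism of section spaces and $\mu^*\colon\VSO{\Delta'}\to\VSO\Delta$ is continuous linear, whence $\ud^j(\mu_* R)(\Phi')(\Psi'_1,\dots,\Psi'_j)=(\mu_E)_*\bigl(\ud^j R(\mu^*\Phi')(\mu^*\Psi'_1,\dots,\mu^*\Psi'_j)\bigr)$; Lemma \ref{diffeoinv}\,(ii) guarantees that $\mu^*$ carries (uniform) (0-)test objects on $\Delta'$ to (uniform) (0-)test objects on $\Delta$, so the estimate transfers as before. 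Thus the whole proposition rests on the two transformation rules collected in Lemma \ref{diffeoinv}, and I expect the bookkeeping of the derivative expansion in the $\Lieh_X$ case — where one must recognise that the $\ud^{j+1}R$ term is legitimately of testing form — to be the only place genuinely requiring care.
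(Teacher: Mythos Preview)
Your proof is correct and follows essentially the same approach as the paper's: Leibniz expansion plus continuity of $\otimes$ for (ii), local trivialization and continuity of contraction for (iii), and the transformation behaviour of test objects under $\Lso_X$ and $\mu^*$ from Lemma \ref{diffeoinv} for (iv) and (v). The paper itself is much terser---it dispatches (iv) and (v) with ``easily seen from the definitions''---so your explicit computation of $\ud^j(\Phi\mapsto\ud R(\Phi)(\Lso_X\Phi))$ and the observation that $\Lso_X$ also preserves $\TOvz\Delta$ are precisely the details the paper leaves implicit.
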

\begin{proof}
(i) is clear. For (ii), note that $\ud^j (R \otimes S)(\Phi)(\Psi, \dotsc, \Psi)$ equals the sum $\sum_{l=0}^j \binom{j}{l} \ud ^l R(\Phi)(\Psi, \dotsc, \Psi) \otimes \ud^{j-l} S(\Phi)(\Psi, \dotsc, \Psi)$. Because $\otimes: \Gamma(E) \times \Gamma(F) \to \Gamma(E \otimes F)$ is continuous, for each $\csna \in \csn {E \otimes F}$ there exist $\csnb \in \csn E$ and $\csnc \in \csn F$ such that $\csna ( s \otimes t ) \le \csnb(s) \csnc(t)$ $\forall s \in \Gamma(E), t \in \Gamma(F)$, which implies the claim.

For (iii), moderateness and negligibility of the components follows from continuity of the contraction $m_g\colon \Gamma(E) \to C^\infty(M)$ with $g \in \Gamma(E^*)$. For the converse direction, as noted above it suffices to test on sets $U$ where $E$ is trivializable; the claim then follows from $\Gamma(U,E) \cong C^\infty(U)^k$ with $\dim E = k$ because we can estimate seminorms $R(\Phi_\e)|_U$ by seminorms of its finitely many components.

(iv) and (v) are easily seen from the definitions.
\end{proof}

\begin{definition}
The $C^\infty(M)$-quotient module $\Gq E\Delta \coleq \Gm E\Delta / \Gn E\Delta$ is called the \emph{space of generalized sections of $E$} with index set $\Delta$.
\end{definition}

From Theorem \ref{modalgprop} \ref{modalgprop.3} one easily sees that we have induced isomorphisms \proofs{\#45}
\[ \Gq E\Delta \cong \Gq M\Delta \otimes_{C^\infty(M)} \Gamma(E) \cong \Hom_{C^\infty(M)}(\Gamma(E^*), \Gq M\Delta). \]
Moreover, we obtain mappings
\begin{align*}
 \otimes \colon \Gq E\Delta \times \Gq F\Delta &\to \Gq {E \otimes F}\Delta \\
\Lieh_X \colon \Gq E\Delta & \to \Gq E\Delta \qquad (X \in \fX(M)) \\
\Liet_X \colon \Gq E\Delta & \to \Gq E\Delta \qquad (X \in \Gq {TM}\Delta) \\
\mu_* \colon \Gq E\Delta &\to \Gq {E'}{\Delta'}
\end{align*}
with analoguous properties as on the basic space, i.e., $\otimes$ is $\Gq M\Delta$-bilinear, 
the Lie derivatives are $\bK$-bilinear in both arguments, and $\Liet_X$ is $\Gq M\Delta$-linear in $X$. Furthermore, we can define the mixed tensor algebra
\[
 \Gt E\Delta \coleq \bigoplus_{r,s \ge 0} \Gq{E^r_s}\Delta.
\]

\section{Association}\label{sec_association}

The concept of association, which is an equivalence relation in spaces of Colombeau generalized functions which is coarser than equality, is important for modelling a wide range of physical phenomena (cf.~\cite[95]{MOBook}). It extends faithfully the concept of equality of distributions, i.e., two embedded distributions are equal if they are associated. Furthermore, products of smooth functions and distributions, which are not preserved in Colombeau algebras on the level of equality, are preserved on the level of association.
It has furthermore been observed, roughly said, that whenever calculations make sense in distribution theory, the analogue calculations in Colombeau algebras give a result associated to the classical result. Thus, this concept is the means by which full compatibility of Colombeau algebras with distribution theory is obtained (cf.~\cite{ColElem}).

In this section we will give the definition and main properties of association of generalized sections.

Note that by the Banach-Steinhaus theorem, convergence of a net $(u_\e)_{\e \in (0,1]}$ in $\cD'(M, E)$ is equivalent to weak convergence, i.e.,
\[ u_\e \to 0 \textrm{ in }\cD'(M,E) \Longleftrightarrow \forall s \in \Gamma(E^* \otimes \Vol(M)): \langle u_\e, s \rangle \to 0. \]
Furthermore, this is equivalent to componentwise convergence, i.e., $u_\e \to 0$ in $\cD'(M,E)$ if and only if $u_\e \cdot \SDa \to 0$ in $\cD'(M)$ for all $\SDa \in \Gamma(E^*)$, or equivalently, $\langle u_\e \cdot \SDa, \omega \rangle \to 0$ for all $\SDa \in \Gamma(E^*)$ and $\omega \in \Gamma(\Vol(M))$.

\begin{definition}\label{def_assoc}
\begin{enumerate}[label=(\roman*)]
\item $R,S \in \Gb E\Delta$ are called \emph{associated} (written $R \approx S$) if $\forall (\Phi_\e)_\e \in \TOv\Delta$: $R(\Phi_\e) - S(\Phi_\e) \to 0$ in $\cD'(M, E)$.
\item $R \in \Gb E\Delta$ is said to admit $u \in \cD'(M,E)$ as an \emph{associated distribution} if $R \approx \iota(u)$, which is the case if and only if $\forall (\Phi_\e)_\e \in \TOv\Delta$: $R(\Phi_\e) \to u$ in $\cD'(M, E)$.
\end{enumerate}
\end{definition}

Clearly any $R \in \Gn E\Delta$ is associated to $0$, hence association is well-defined on $\Gq E\Delta$ by defining two elements of $\Gq E\Delta$ to be associated if any of their respective representatives are.

\begin{proposition}\label{prop_assoc}
 \begin{enumerate}[label=(\roman*)]
  \item \label{prop_assoc.1} For $r \in \Gamma(E)$ and $u \in \cD'(M, F)$, $r \otimes \iota(u) \approx \iota (r \otimes u)$.
  \item\label{prop_assoc.2} For $u \in \cD'(M, E)$ and $\SDa \in \Gamma(E^*)$, $\iota(u) \cdot \SDa \approx \iota(u \cdot \SDa)$.
  \item Let $R,S \in \Gq E\Delta$. Then $R \approx S$ if and only if $R \cdot \SDa \approx S \cdot \SDa$ $\forall \SDa \in \Gamma(E^*)$.
  \item If $R_i, S_i \in \Gq E\Delta$ such that $R_i \approx S_i$ for $i = 1,2$ then $R_1 + R_2 \approx S_1 + S_2$.
  \item If $R,S \in \Gq E\Delta$ such that $R \approx S$ then $s \otimes R \approx s \otimes S$ $\forall s \in \Gamma(F)$.
  \item\label{prop_assoc.6} If $R,S \in \Gq E\Delta$ such that $R \approx S$, then $\Liet_X R \approx \Liet_X S$ $\forall X \in \fX(M)$.
\item\label{prop_assoc.7} $\Lieh_X (\iota u) \approx \Liet_X ( \iota u)$ $\forall u \in \cD'(M,E)$ $\forall X \in \fX(M)$.
 \end{enumerate}
\end{proposition}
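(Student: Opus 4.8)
The plan is to unwind Definition~\ref{def_assoc} in each item and reduce the claim to the convergence of an explicit net in $\cD'(M,E)$, always testing with a common index set $\Delta$ large enough to contain every bundle that occurs. Three facts will do all the work: property (VSO2), which gives $\Phi_{G,\e}(w)\to w$ in $\cD'(M,G)$ for each fixed distribution $w$; the continuity on distributions of the classical operations involved (tensoring by a fixed $s\in\Gamma(F)$, contracting by a fixed $\SDa\in\Gamma(E^*)$, and applying $\Lie_X$); and the componentwise characterisation of convergence in $\cD'(M,E)$ recalled just before Definition~\ref{def_assoc}, namely that $u_\e\to 0$ iff $u_\e\cdot\SDa\to 0$ in $\cD'(M)$ for every $\SDa\in\Gamma(E^*)$.

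For \ref{prop_assoc.1} I would evaluate both sides at $(\Phi_\e)_\e\in\TOv\Delta$. As $r\in\Gamma(E)=\Gb E\emptyset$ is constant in its argument, the left-hand side is $r\otimes\Phi_{F,\e}(u)$ and the right-hand side is $\Phi_{E\otimes F,\e}(r\otimes u)$. By (VSO2) in the $F$- and $(E\otimes F)$-components, $\Phi_{F,\e}(u)\to u$ and $\Phi_{E\otimes F,\e}(r\otimes u)\to r\otimes u$ in the respective distribution spaces; since $s\mapsto r\otimes s$ is continuous $\cD'(M,F)\to\cD'(M,E\otimes F)$, both nets tend to $r\otimes u$ and their difference to $0$. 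Item \ref{prop_assoc.2} is identical in spirit, with the continuous contraction $s\mapsto s\cdot\SDa$ replacing $s\mapsto r\otimes s$: both $\Phi_{E,\e}(u)\cdot\SDa$ and the scalar regularisation $\Phi_{M\times\bK,\e}(u\cdot\SDa)$ converge to $u\cdot\SDa$ in $\cD'(M)$.

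The middle items are formal. For (iii), since contraction commutes with evaluation, $(R(\Phi_\e)-S(\Phi_\e))\cdot\SDa=(R\cdot\SDa)(\Phi_\e)-(S\cdot\SDa)(\Phi_\e)$, so the componentwise characterisation turns $R\approx S$ directly into $R\cdot\SDa\approx S\cdot\SDa$ for all $\SDa$. Items (iv) and (v) follow from linearity of the limit and from continuity of $s\mapsto s\otimes(R(\Phi_\e)-S(\Phi_\e))$, exactly as in \ref{prop_assoc.1}. For \ref{prop_assoc.6} I use $(\Liet_X R)(\Phi)=\Lie_X(R(\Phi))$, so the difference equals $\Lie_X(R(\Phi_\e)-S(\Phi_\e))$, which tends to $0$ because $\Lie_X$ is continuous on $\cD'(M,E)$.

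The one item with genuine content is \ref{prop_assoc.7}. Here I would first recall from Proposition~\ref{prop_derivprop}\ref{5.3} that $\Lieh_X\iota u=\iota(\Lie_X u)$ holds as an exact identity, so it suffices to show $\iota(\Lie_X u)\approx\Liet_X(\iota u)$. Evaluating at $(\Phi_\e)_\e$ and using $(\iota w)(\Phi)=\Phi(w)$ together with $(\Liet_X\iota u)(\Phi)=\Lie_X(\Phi(u))$, the difference becomes $\Phi_{E,\e}(\Lie_X u)-\Lie_X(\Phi_{E,\e}(u))=-(\Lso_X\Phi_{E,\e})(u)$ by the very definition $\Lso_X\Phi=\Lie_X\circ\Phi-\Phi\circ\Lie_X$. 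By Lemma~\ref{diffeoinv}(i) the net $(\Lso_X\Phi_{E,\e})_\e$ is a $0$-test object, hence satisfies (VSO2'), so $(\Lso_X\Phi_{E,\e})(u)\to 0$ in $\cD'(M,E)$ and the association follows. Thus the real work is concentrated in \ref{prop_assoc.7}, where the difference between the two Lie derivatives on an embedded distribution is exactly the application of $\Lso_X\Phi$, and the whole point of $0$-test objects is that this quantity is association-trivial; the only other thing to watch is the routine bookkeeping of index sets and of which VSO-component acts in each product.
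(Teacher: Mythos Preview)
Your proof is correct and follows essentially the same route as the paper's. The only cosmetic difference is in \ref{prop_assoc.7}: you package the convergence via Lemma~\ref{diffeoinv}(i) and (VSO2') for $(\Lso_X\Phi_\e)_\e$, whereas the paper unwinds this directly as $\Phi_\e(\Lie_X u)-\Lie_X(\Phi_\e(u))\to \Lie_X u - \Lie_X u = 0$ using (VSO2) and continuity of $\Lie_X$ on $\cD'(M,E)$---but this is precisely the content of the implication (VSO2)$\Rightarrow$(VSO2') in Lemma~\ref{diffeoinv}, so the arguments are the same.
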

\begin{proof}
 (i): Fix $s \otimes t \in \Gamma(E^*) \otimes_{C^\infty(M)} \Gamma(F^* \otimes \Vol(M))$ and $(\Phi_\e)_\e \in \TOv F$. Then
\begin{align*}
 \langle (r \otimes \iota(u))(\Phi_\e), s \otimes t \rangle &= \langle r \otimes \iota(u)(\Phi_\e), s \otimes t \rangle = \langle \iota(u)(\Phi_\e), (r \cdot s) t \rangle \\
& \to \langle u, (r \cdot s) t \rangle = \langle r \otimes u, s \otimes t \rangle.
\end{align*}

\ref{prop_assoc.2}--\ref{prop_assoc.6} are easily verified in an elementary manner.

\ref{prop_assoc.7}: $(\Lieh_X(\iota u) - \Liet_X(\iota u))(\Phi_\e) = - (\iota u)(\Lso_X \Phi_\e) + \Lie_X((\iota u)( \Phi_\e)) - \Lie_X ( (\iota u) ( \Phi_\e)) = \Phi_\e ( \Lie_X (u)) - \Lie_X ( \Phi_\e(u)) \to \Lie_X u - \Lie_X u = 0$ in $\cD'(M,E)$ by (VSO2) and continuity of $\Lie_X$.
\end{proof}

\begin{remark} In Colombeau theory, there are many distinguished forms of association. These come about by two generalizations of Definition \ref{def_assoc} (i). The first of these is to replace convergence in $\cD'(M,E)$ by convergence in some other space $\cH$ of distributions, for example the space of $C^k$-tensor fields for $k \in \bN$. For results analogous to Proposition \ref{prop_assoc} \ref{prop_assoc.1} to hold, one then also has to adapt the space of test objects $\TOv E$ such that its elements converge to the identity in $\Lincb(\cH, \cH)$, which poses no difficulty in many cases. The second generalization in the definition of association is to distinguish various speeds of covergence of $R(\Phi_\e)$. 
\end{remark}

\section{The sheaf property}

In a geometrical context the sheaf property is essential because this is what allows one to talk of local coordinates uniquely defining a generalized section. Throughout this section, let a manifold $M$, a vector bundle $E$ and an index set $\Delta$ be fixed.

Although the basic spaces $\Gb E\Delta$ are sufficiently large for defining all desired operations like tensor products and covariant derivatives for embedded vector-valued distributions, it is sometimes desirable to work in smaller subspaces which have better properties or are easier to handle.

The motivation for the introduction of so-called locality properties of nonlinear generalized functions originally was to obtain the sheaf property. As was seen in the scalar case on $\Omega \subseteq \bR^n$ (cf.~\cite{papernew}), the entire basic space $\Gnb\Omega \coleq C^\infty(\SK \Omega, C^\infty(\Omega))$ is too large in order to give a sheaf after the Colombeau quotient construction because for $R \in \Gnb\Omega$, $\vec\varphi \in \SK \Omega$ and $x \in \Omega$, the expression $R(\vec\varphi)(x)$ may depend on the behaviour of $\vec\varphi$ at points far away from $x$.

The basic observation then was that for $u \in \cD'(\Omega)$, $(\iota u)(\vec\varphi)(x)$ depends only on $\vec\varphi(x)$, which is the strongest locality condition. Relaxing it to various degress, one obtains subalgebras $\cE_{pi}(\Omega) \subseteq \cE_{ploc}(\Omega) \subseteq \cE_{loc}(\Omega) \subseteq \cE(\Omega)$, defined by simple algebraic conditions, with the remarkable properties that (\cite{papernew})
\begin{enumerate}[label=(\roman*)]
 \item $\cE_{pi}(\Omega) \cong C^\infty(\cD(\Omega))$, recovering Colombeau's original basic space of \cite{ColNew},
 \item $\cE_{ploc}(\Omega) \cong C^\infty(\cD(\Omega), C^\infty(\Omega))$, recovering the basic space of $\Gd$,
 \item $\cE_{loc}(\Omega)$ provides exactly what is needed for the Colombeau quotient to be a sheaf.
\end{enumerate}

In order to obtain the sheaf property in the vector valued setting we will transfer the notion of locality to the formalism of smoothing operators. Because of the structure of the basic space $\Gb E\Delta$ there is no simple translation of the concepts of point-independence and point-locality from \cite{papernew}.

For any vector bundle $E \to M$, $\Phi \in \VSO E$, $g \in \Gamma(E^*)$ and $p \in M$, $m_g \colon \Gamma(E) \to C^\infty(M)$ denotes contraction with $g$ and $\ev_p\colon \Gamma(E) \to \bR$ evaluation at $p$, hence $(\ev_p \circ m_g \circ \Phi)(u) = (\Phi(u) \cdot g)(p)$ for $u \in \cD'(M, E)$. For $\Phi \in \VSO \Delta$ we define $\ev_p$ and $m_g$ componentwise. 

\begin{definition}\label{def_locprop}$R \in \Gb E\Delta$ is called \emph{local} if for all $\Phi,\Phi' \in \VSO \Delta$ and open subsets $U \subseteq M$, $|_U \circ \Phi = |_U \circ \Phi'$ implies $R(\Phi)|_U = R(\Phi')|_U$.
We denote by $\Gbloc E\Delta$ the $C^\infty(M)$-submodule of $\Gb E\Delta$ consisting of local elements.
\end{definition}
Obviously, $\iota$ maps $\cD'(M, E)$ into $\Gbloc E{\{ E \}}$ and $\Gamma(E) \subseteq \Gbloc E\emptyset$. Locality is preserved by diffeomorphisms, tensor products and contraction:

\begin{proposition}
\begin{enumerate}[label=(\roman*)]
 \item With the notation of Definition \ref{def_vbiso}, $\mu_*(\Gbloc E\Delta) \subseteq \Gbloc{E'}{\Delta'}$.
 \item For $R \in \Gbloc E{\Delta}$ and $S \in \Gbloc F{\Delta}$ we have $R \otimes S \in \Gbloc{E \otimes F}{\Delta}$.
 \item $\Gbloc E\Delta \cong \Gbloc M\Delta \otimes_{C^\infty(M)} \Gamma(E) \cong \Hom_{C^\infty(M)}(\Gamma(E^*), \Gbloc M\Delta)$ as $C^\infty(M)$-modules.
\end{enumerate}
\end{proposition}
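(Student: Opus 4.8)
The plan is to exploit the single structural fact underlying all three parts: every operation occurring here --- pullback and push-forward of sections and of smoothing operators along a vector bundle isomorphism, the fibrewise tensor product $\otimes_{C^\infty(M)}$, and contraction $m_\alpha$ with a dual section --- is \emph{fibrewise}, hence commutes with restriction to open sets. Since locality in the sense of Definition~\ref{def_locprop} is itself a statement about restrictions, each verification reduces to unwinding the defining formula and commuting a restriction $|_U$ past these operations.

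For (i), let $R \in \Gbloc E\Delta$ and suppose $\Phi',\Psi' \in \VSO{\Delta'}$ and an open $U' \subseteq N$ satisfy $|_{U'}\circ \Phi' = |_{U'}\circ \Psi'$ componentwise. I would set $U \coleq \underline\mu^{-1}(U')$ and first show $|_U \circ \mu^*\Phi' = |_U \circ \mu^*\Psi'$. This holds because, for each $G$, the pulled-back operator acts by $(\mu_G^*\Phi'_{G'})(v) = \mu_G^*(\Phi'_{G'}(\mu_{G*}v))$, and the section pullback $\mu_G^*$ carries sections over $U'$ to sections over $U$ and commutes with restriction; thus $(\mu_G^*\Phi'_{G'})(v)|_U$ depends only on $\Phi'_{G'}(\mu_{G*}v)|_{U'}$, which by hypothesis equals $\Psi'_{G'}(\mu_{G*}v)|_{U'}$. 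Locality of $R$ then gives $R(\mu^*\Phi')|_U = R(\mu^*\Psi')|_U$, and applying $(\mu_E)_*$ --- which again commutes with restriction, now from $U$ to $U'$ --- yields $(\mu_* R)(\Phi')|_{U'} = (\mu_* R)(\Psi')|_{U'}$, so $\mu_* R \in \Gbloc{E'}{\Delta'}$.

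Part (ii) is immediate from $(R\otimes S)(\Phi) = R(\Phi)\otimes_{C^\infty(M)} S(\Phi)$: if $|_U\circ\Phi = |_U\circ\Phi'$, then locality of $R$ and of $S$ separately give $R(\Phi)|_U = R(\Phi')|_U$ and $S(\Phi)|_U = S(\Phi')|_U$, and since the fibrewise tensor product commutes with restriction, $(R\otimes S)(\Phi)|_U = R(\Phi)|_U \otimes S(\Phi)|_U = (R\otimes S)(\Phi')|_U$.

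For (iii) I would first isolate the key lemma: $R \in \Gb E\Delta$ is local \emph{if and only if} $R\cdot\alpha$ is local for every $\alpha \in \Gamma(E^*)$. The forward implication is easy, again because contraction commutes with restriction. The reverse implication, which I expect to be the main obstacle, rests on the fact that the evaluations $\alpha(p)$, $\alpha \in \Gamma(E^*)$, exhaust $E^*_p$ for each $p$ (produced from a local frame and a bump function); hence if $(R(\Phi)\cdot\alpha)|_U = (R(\Phi')\cdot\alpha)|_U$ for all $\alpha$, then $R(\Phi)$ and $R(\Phi')$ agree pointwise on $U$, i.e.\ $R(\Phi)|_U = R(\Phi')|_U$. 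Granting this, the $\Hom$-isomorphism of Theorem~\ref{reprmod}, under which $R$ corresponds to $[\alpha \mapsto (\Phi\mapsto R(\Phi)\cdot\alpha)]$, restricts to $\Gbloc E\Delta \cong \Hom_{C^\infty(M)}(\Gamma(E^*), \Gbloc M\Delta)$ precisely because $R$ is local iff all its contractions take values in $\Gbloc M\Delta$. For the tensor-product isomorphism I would use that $\Gamma(E)$ is finitely generated projective to choose a dual basis $b_i \in \Gamma(E)$, $\beta^i \in \Gamma(E^*)$ with $s = \sum_i (s\cdot\beta^i)\,b_i$; then $R = \sum_i (R\cdot\beta^i)\otimes b_i$ under the isomorphism, each $R\cdot\beta^i$ being local, so $\Gbloc E\Delta$ corresponds exactly to $\Gbloc M\Delta \otimes_{C^\infty(M)}\Gamma(E)$, the reverse inclusion being clear since multiplication by a fixed section is local.
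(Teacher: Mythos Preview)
Your proposal is correct. The paper omits the proof entirely as trivial, and your argument is precisely the routine verification the authors have in mind: each operation involved (push-forward along a bundle isomorphism, fibrewise tensor product, contraction with a dual section) commutes with restriction to open sets, so locality is preserved; for (iii) your key lemma that $R$ is local iff every $R\cdot\alpha$ is local, together with the projective-module dual-basis decomposition, is exactly the mechanism by which the isomorphisms of Theorem~\ref{reprmod} restrict to the local subspaces.
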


The proof is trivial and thus omitted. Next, we will consider Lie derivatives.

\begin{proposition} Let $R \in \Gbloc E\Delta$. Then for $X \in \fX(M)$, $\Lieh_X R \in \Gbloc E\Delta$ and for $X \in \Gbloc {TM}\Delta$, $\Liet_X \in \Gbloc E\Delta$.
\end{proposition}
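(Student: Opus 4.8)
The plan is to exploit that both the restriction maps $|_U$ and the classical Lie derivative $\Lie_X$ are \emph{local} operations, so that the locality of $R$ can be propagated through the defining formulas of Definition \ref{deflieder}. Throughout I would fix $\Phi, \Phi' \in \VSO \Delta$ and an open subset $U \subseteq M$ with $|_U \circ \Phi = |_U \circ \Phi'$, and aim to show $(\Lieh_X R)(\Phi)|_U = (\Lieh_X R)(\Phi')|_U$ and the analogous identity for $\Liet_X$.

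The first and only non-routine step is to record that locality of $R$ upgrades to a \emph{joint} locality of its differential: if, in addition, $\Psi, \Psi' \in \VSO \Delta$ satisfy $|_U \circ \Psi = |_U \circ \Psi'$, then $\ud R(\Phi)(\Psi)|_U = \ud R(\Phi')(\Psi')|_U$. This holds because $|_U \circ (\Phi + t\Psi) = |_U \circ (\Phi' + t\Psi')$ for every $t$, so locality of $R$ gives $R(\Phi + t\Psi)|_U = R(\Phi' + t\Psi')|_U$ as an identity of curves in $\Gamma(U, E)$; since $|_U \colon \Gamma(E) \to \Gamma(U, E)$ is bounded and linear it commutes with $\ud/\ud t$, and differentiating at $t = 0$ yields the claim.

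Next I would check that $\Lso_X$ preserves the locality agreement, that is, $|_U \circ \Phi = |_U \circ \Phi'$ implies $|_U \circ \Lso_X \Phi = |_U \circ \Lso_X \Phi'$. Using $\Lso_X \Phi = \Lie_X \circ \Phi - \Phi \circ \Lie_X$ componentwise, for each $G \in \Delta$ and $u \in \cD'(M, G)$ one has $(\Lso_X \Phi_G)(u)|_U = \Lie_X(\Phi_G(u))|_U - \Phi_G(\Lie_X u)|_U$; the first term depends only on $\Phi_G(u)|_U$ because $\Lie_X$ commutes with restriction, while the second equals $\Phi_G(\Lie_X u)|_U$, and both agree for $\Phi$ and $\Phi'$ by hypothesis.

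Combining these observations finishes the proof. For $\Lieh_X$ with $X \in \fX(M)$, writing $(\Lieh_X R)(\Phi) = \Lie_X(R(\Phi)) - \ud R(\Phi)(\Lso_X \Phi)$, the locality of $R$ and the commutation of $\Lie_X$ with $|_U$ handle the first summand, while the joint locality of $\ud R$ together with $|_U \circ \Lso_X \Phi = |_U \circ \Lso_X \Phi'$ handles the second. For $\Liet_X$ with $X \in \Gbloc{TM}\Delta$ we have $(\Liet_X R)(\Phi) = \Lie_{X(\Phi)}(R(\Phi))$, and since $X$ and $R$ are both local, $X(\Phi)|_U = X(\Phi')|_U$ and $R(\Phi)|_U = R(\Phi')|_U$; because $\Lie_Y s$ restricted to $U$ depends only on $Y|_U$ and $s|_U$, the claim follows. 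The main obstacle is thus the joint locality of $\ud R$, which rests solely on interchanging restriction with the convenient-calculus derivative; everything else is bookkeeping with the locality of $\Lie_X$ and $|_U$.
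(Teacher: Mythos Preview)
Your proposal is correct and follows essentially the same approach as the paper: both arguments establish that $|_U \circ \Lso_X\Phi = |_U \circ \Lso_X\Phi'$ from $|_U \circ \Phi = |_U \circ \Phi'$, then differentiate the curve $t \mapsto R(\Phi + t\,\Lso_X\Phi)|_U$ using that $|_U$ commutes with $\ud/\ud t$, and handle $\Liet_X$ via locality of the classical Lie derivative. Your formulation of the joint locality of $\ud R$ as a separate lemma is slightly more general than what the paper writes, but the content is identical.
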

\begin{proof}
Suppose $X \in \Gbloc {TM}\Delta$, $U \subseteq M$ is open and $\Phi,\Phi' \in \VSO \Delta$ are such that $|_U \circ \Phi = |_U \circ \Phi'$. Then $R(\Phi)|_U = R(\Phi')|_U$, $X(\Phi)|_U = X(\Phi')|_U$, and consequently $(\Liet_X R)(\Phi)|_U = (\Lie_{X(\Phi)} R(\Phi))|_U = \Lie_{X(\Phi)|_U} R(\Phi)|_U = \Lie_{X(\Phi')|_U} R(\Phi')|_U = (\Lie_{X(\Phi')} R(\Phi'))|_U = (\Liet_X R)(\Phi')|_U$, hence $\Liet_X R$ is local.

Now suppose that $X \in \fX(M)$. Then $|_U \circ \Lso_X\Phi = |_U \circ \Lie_X \circ \Phi - |_U \circ \Phi \circ \Lie_X = |_U \circ \Lie_X \circ \Phi' - |_U \circ \Phi' \circ \Lie_X = |_U \circ \Lso_X\Phi'$ and thus
$\ud R(\Phi)( \Lso_X \Phi)|_U = (\frac{\ud}{\ud t}|_{t=0} R(\Phi + t \cdot \Lso_X \Phi))|_U = \frac{\ud}{\ud t}|_{t=0} ( R(\Phi + t \cdot \Lso_X \Phi)|_U ) = \frac{\ud}{\ud t}|_{t=0} ( R(\Phi' + t \cdot \Lso_X \Phi'))|_U) = \ud R(\Phi')(\Lso_X\Phi')|_U$. In sum we have $(\Lieh_XR)(\Phi)|_U = (\Lieh_XR)(\Phi')|_U$.
\end{proof}

\begin{definition}We define the $C^\infty(M)$-modules $\Gmloc E\Delta \coleq \Gm E\Delta \cap \Gbloc E\Delta$, $\Gnloc E\Delta \coleq \Gn E\Delta \cap \Gbloc E\Delta$ and $\Gloc E\Delta \coleq \Gmloc E\Delta / \Gnloc E\Delta$.
\end{definition}

One easily sees
that we have $C^\infty(M)$-module isomorphisms
\[ \Gloc E\Delta \cong \Gloc M\Delta \otimes_{C^\infty(M)} \Gamma(E) \cong \Hom_{C^\infty(M)} ( \Gamma(E^*), \Gloc M\Delta) \]
with an analogue of Corollary \ref{algprop} holding. We have induced mappings $\mu_*\colon \Gloc E\Delta \to \Gloc{E'}{\Delta'}$, $\otimes\colon \Gloc E{\Delta} \times \Gloc F{\Delta} \to \Gloc{E \otimes F}{\Delta}$, $\Lieh_X\colon \Gloc E\Delta \to \Gloc E\Delta$ for $X \in \fX(M)$ as well as $\Liet_X\colon \Gloc E\Delta \to \Gloc E\Delta$ for $X \in \Gloc{TM}\Delta$; $\Gloc E\Delta$ is a $\Gloc M\Delta$-module and $\Gloc E\Delta$ is a $C^\infty(M)$-submodule of $\Gq E\Delta$.

The main point of locality is that it enables one to restrict generalized sections as follows.

\begin{theorem}\label{eloc_restriction}Let $U,V \subseteq M$ be open, $V \subseteq U$ and $R \in \Gbloc{U, E}\Delta$. Then there is a unique element $R|_V \in \Gbloc{V, E}\Delta$ such that for any open subset $W \subseteq V$, $\Phi \in \VSO{V, \Delta}$ and $\Phi' \in \VSO {U, \Delta}$ such that $|_W \circ \Phi \circ |_V = |_W \circ \Phi'$ on $\cD'(U,E)$ we have $R|_V (\Phi)|_W = R(\Phi')|_W$.

If in addition we have $\Psi_i \in \VSO{V, \Delta}$ and $\Psi_i' \in \VSO{U, \Delta}$ such that $|_W \circ \Psi_i \circ |_V = |_W \circ \Psi'_i$ for $i=1\dotsc j \in \bN$, then $\ud^j (R|_V)(\Phi)(\Psi_1,\dotsc,\Psi_j)|_W = (\ud^j R)(\Phi')(\Psi_1',\dotsc,\Psi_j')|_W$.

For $f \in C^\infty(U)$, $(f \cdot R)|_V = f|_V \cdot R|_V$. Furthermore, for $U,V,W \subseteq M$ open with $W \subseteq V \subseteq U$,  $(R|_V)|_W = R|_W$, hence $U \mapsto \Gbloc{U, E}\Delta$ is a presheaf of $C^\infty$-modules.
\end{theorem}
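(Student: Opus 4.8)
The plan is to build $R|_V$ from a local-extension device for smoothing operators and then let locality of $R$ force all the required consistency.

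First I would construct, for a single vector smoothing operator, an extension from $V$ to $U$. Given a relatively compact open $W$ with $\overline W \subseteq V$, choose $\chi \in C^\infty(U)$ with $\supp\chi \csub V$ and $\chi \equiv 1$ on a neighbourhood of $\overline W$. For $\Phi = (\Phi_G)_G \in \VSO{V,\Delta}$ define $L_W\Phi \in \VSO{U,\Delta}$ by letting $(L_W\Phi)_G(u)$ be the extension by zero to $U$ of $\chi \cdot \Phi_G(u|_V)$ for $u \in \cD'(U,G)$. Since $\chi\cdot\Phi_G(u|_V)$ is supported in $\supp\chi \csub V$ this is a smooth section on $U$, and restriction to $\cD'(V,\cdot)$, application of $\Phi_G$, and multiplication by $\chi$ followed by extension by zero are continuous and linear, so $L_W\colon \VSO{V,\Delta}\to\VSO{U,\Delta}$ is continuous and linear. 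By construction $|_W\circ(L_W\Phi)_G = |_W\circ\Phi_G\circ|_V$, so $L_W\Phi$ realizes the compatibility relation of the theorem over $W$. This extension step is the key lemma, and checking that $L_W\Phi$ genuinely lands in $\VSO{U,\Delta}$ is the main technical point.

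Next I would define $R|_V$. Cover $V$ by relatively compact open sets $W$ with $\overline W\subseteq V$ and set $R|_V(\Phi)|_W \coleq R(L_W\Phi)|_W$. Well-definedness and gluing both reduce to locality of $R$: if $\Phi'\in\VSO{U,\Delta}$ is any operator with $|_W\circ\Phi' = |_W\circ\Phi\circ|_V$, then $|_W\circ\Phi' = |_W\circ L_W\Phi$, so locality gives $R(\Phi')|_W = R(L_W\Phi)|_W$; hence the value is independent of all choices, and on an overlap $W_1\cap W_2$ the two local definitions agree by restricting the relation and applying locality. The local pieces therefore glue to a section $R|_V(\Phi)\in\Gamma(V,E)$, and the same argument on a relatively compact exhaustion of an arbitrary open $W\subseteq V$ yields the stated characterization $R|_V(\Phi)|_W = R(\Phi')|_W$ for every compatible $\Phi'$. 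Uniqueness is then immediate, since any $S$ with this property agrees with $R|_V$ on each such $W$, hence on $V$.

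Smoothness, the derivative formula, $C^\infty$-linearity and transitivity I would obtain by working locally through $L_W$. For fixed $W$, $R|_V(\cdot)|_W = |_W\circ R\circ L_W$ is a composite of the smooth map $R$ with continuous linear maps, hence smooth $\VSO{V,\Delta}\to\Gamma(W,E)$; since $\Gamma(V,E)$ carries the projective topology with respect to these restrictions, $R|_V$ is smooth, so $R|_V\in\Gb{V,E}\Delta$. The locality of $R|_V$ and the derivative identity both follow from the fact that locality propagates to all differentials: if $|_W\circ\Phi' = |_W\circ\Phi''$ and $|_W\circ\Psi_i' = |_W\circ\Psi_i''$, then $|_W\circ(\Phi'+\sum_i t_i\Psi_i') = |_W\circ(\Phi''+\sum_i t_i\Psi_i'')$ for all $t_i$, so locality of $R$ together with continuity and linearity of $|_W$ gives, after differentiating in the $t_i$ at $0$, the equality $(\ud^jR)(\Phi')(\Psi_1',\dots,\Psi_j')|_W = (\ud^jR)(\Phi'')(\Psi_1'',\dots,\Psi_j'')|_W$; taking $\Phi'' = L_W\Phi$ and $\Psi_i'' = L_W\Psi_i$ yields the claimed derivative formula for arbitrary compatible representatives. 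Then $(f\cdot R)|_V = f|_V\cdot R|_V$ follows by evaluating both sides on each $W$ through $L_W$, and the presheaf identity $(R|_V)|_W = R|_W$ follows by uniqueness, using $|_{W''}\circ|_V = |_{W''}$ for $W''\subseteq W$ to see that a $U$-extension realizing $R|_V$ over $W''$ simultaneously realizes $R|_W$. I expect the extension lemma and the propagation of locality to higher differentials to be the only real obstacles; the remaining items are bookkeeping with restrictions and partitions of unity.
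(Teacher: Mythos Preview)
Your proposal is correct and follows essentially the same route the paper takes (it defers to the scalar case in \cite{papernew}, where the argument is precisely via a cutoff-based extension operator like your $L_W$ combined with locality of $R$). The key ingredients---the linear continuous extension $L_W\colon\VSO{V,\Delta}\to\VSO{U,\Delta}$, independence of choices via locality, smoothness from the local composite $|_W\circ R\circ L_W$, and propagation of locality to higher differentials by differentiating the parametrized identity---are exactly the expected ones, and your handling of each is sound.
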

\begin{proof}
 Exactly as in the scalar case (cf.~\cite{papernew}).
\end{proof}

\begin{remark}The condition $\Phi(u|_V)|_W = \Psi(u)|_W$ $\forall u \in \cD'(U,E)$ or equivalently $|_W \circ \Phi \circ |_V = |_W \circ \Psi$ would simply correspond to $\vec\varphi|_W = \vec\psi|_W$ in the formalism of smoothing kernels. However, the implicit inclusion $\cD(V) \subseteq \cD(U)$ corresponds to defining $\Phi(u) \coleq \Phi(u|_V)$ for $\Phi \in \VSO{V, E}$ and $u \in \cD'(U,E)$, from which we refrain because it would be too ambigous notationally.
\end{remark}

\begin{lemma}\label{tensor_restrict}Let $U,V$ be open with $V \subseteq U$.
\begin{enumerate}[label=(\roman*)]
 \item \label{tensor_restrict.1} For $R \in \Gbloc{U, E}\Delta$ and $S \in \Gbloc {U, F}\Delta$, $(R \otimes S)|_V = R|_V \otimes S|_V$.
 \item \label{tensor_restrict.2} For $R \in \Gbloc{U, E}\Delta$ and $\mu$ as in Definition \ref{def_vbiso}, $\mu_*(R|_V) = (\mu_*R)|_{\mu(V)}$.
 \item \label{tensor_restrict.3} Let $R \in \Gbloc{U, E}\Delta$. For $X \in \fX(U)$, $(\Lieh_X R)|_V = \Lieh_{X|_V} R|_V$. For $X \in \Gbloc{U, TM}\Delta$, $(\Liet_X R)|_V = \Liet_{X|_V} R|_V$.
 \item \label{tensor_restrict.4} For $X,Y \in \Gb{U, TM}\Delta$, $[X, Y]|_V = [X|_V, Y|_V]$.
 \end{enumerate}
\end{lemma}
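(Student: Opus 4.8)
The plan is to dispatch all four identities by a single device. In each case both sides are \emph{local} generalized sections over $V$ (respectively over $\mu(V) = \underline\mu(V)$) — this is guaranteed by the earlier propositions showing that $\otimes$, $\mu_*$, $\Lieh_X$, $\Liet_X$ and the Lie bracket preserve locality — so by the uniqueness assertion in Theorem \ref{eloc_restriction} it suffices to verify that the right-hand side satisfies the characterizing relation defining the restriction on the left. The common engine throughout is that the corresponding \emph{classical} operation on smooth objects is itself local, i.e.\ commutes with restriction: $(s \otimes t)|_W = s|_W \otimes t|_W$, $((\mu_E)_* s)|_{\underline\mu(W)} = (\mu_E)_*(s|_W)$, $(\Lie_X s)|_W = \Lie_{X|_W}(s|_W)$, $(\Lie_X u)|_W = \Lie_{X|_W}(u|_W)$ for distributions $u$, and $[Z,Z']|_W = [Z|_W, Z'|_W]$ for smooth vector fields. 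For \ref{tensor_restrict.1} I would fix $W \subseteq V$ open together with $\Phi \in \VSO{V,\Delta}$, $\Phi' \in \VSO{U,\Delta}$ satisfying $|_W \circ \Phi \circ |_V = |_W \circ \Phi'$, and compute $(R|_V \otimes S|_V)(\Phi)|_W = R|_V(\Phi)|_W \otimes S|_V(\Phi)|_W = R(\Phi')|_W \otimes S(\Phi')|_W = (R \otimes S)(\Phi')|_W$, using Theorem \ref{eloc_restriction} for $R$ and $S$ separately; uniqueness then gives the claim.

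The $\Liet$ half of \ref{tensor_restrict.3} and all of \ref{tensor_restrict.4} are equally immediate. For $\Liet$, evaluate $(\Liet_X R)(\Phi')|_W = \Lie_{X(\Phi')}(R(\Phi'))|_W = \Lie_{X(\Phi')|_W}(R(\Phi')|_W)$ by locality of the classical Lie derivative, then substitute the characterizing properties $X(\Phi')|_W = X|_V(\Phi)|_W$ and $R(\Phi')|_W = R|_V(\Phi)|_W$ to recover $(\Liet_{X|_V} R|_V)(\Phi)|_W$. For the bracket, $[X,Y](\Phi')|_W = [X(\Phi'), Y(\Phi')]|_W = [X(\Phi')|_W, Y(\Phi')|_W] = [X|_V(\Phi)|_W, Y|_V(\Phi)|_W] = [X|_V, Y|_V](\Phi)|_W$, using locality of the classical Lie bracket (here I read $X,Y$ as local, which is what makes the restrictions meaningful via Theorem \ref{eloc_restriction}).

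The $\Lieh$ half of \ref{tensor_restrict.3} is the most delicate, since alongside the value $R(\Phi)$ one must also match the \emph{directions} $\Lso_X \Phi'$ over $U$ and $\Lso_{X|_V}\Phi$ over $V$. Writing $\Lso_X\Phi = \Lie_X \circ \Phi - \Phi \circ \Lie_X$, I would verify $|_W \circ (\Lso_{X|_V}\Phi) \circ |_V = |_W \circ (\Lso_X\Phi')$ on $\cD'(U,E)$: the ``$\Lie_X \circ \Phi$'' terms match by locality of $\Lie_X$ on sections together with $|_W \circ \Phi \circ |_V = |_W \circ \Phi'$, and the ``$\Phi \circ \Lie_X$'' terms match because locality of $\Lie_X$ on distributions gives $\Lie_{X|_V}(u|_V) = (\Lie_X u)|_V$, after which the compatibility condition applied to $\Lie_X u$ closes the computation. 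Granting this, the derivative term is handled by the first-order form of the characterizing property in Theorem \ref{eloc_restriction} (with $\Psi_1 = \Lso_{X|_V}\Phi$, $\Psi_1' = \Lso_X\Phi'$), yielding $\ud(R|_V)(\Phi)(\Lso_{X|_V}\Phi)|_W = \ud R(\Phi')(\Lso_X\Phi')|_W$; combined with the $\Lie_X(R(\cdot))$ term this gives $(\Lieh_X R)|_V = \Lieh_{X|_V} R|_V$.

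The genuinely non-routine step, and the one I expect to be the main obstacle, is \ref{tensor_restrict.2}. Here I would set $W' \subseteq \underline\mu(V)$ open and $W \coleq \underline\mu^{-1}(W') \subseteq V$, and take $\Phi' \in \VSO{\underline\mu(V),\Delta'}$, $\Phi'' \in \VSO{N,\Delta'}$ with $|_{W'} \circ \Phi' \circ |_{\underline\mu(V)} = |_{W'} \circ \Phi''$. The crux is to show that applying the pullback $\mu^*$ of Definition \ref{def_vbiso} transports this into $|_W \circ (\mu^*\Phi') \circ |_V = |_W \circ (\mu^*\Phi'')$ on $\cD'(U,E)$, which requires carefully disentangling how $\mu^*$ interacts simultaneously with the bundle/base restriction and with the section restriction $|_W$. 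Once this is in place, the characterizing property of $R|_V$ gives $R|_V(\mu^*\Phi')|_W = R(\mu^*\Phi'')|_W$, and pushing forward by $(\mu_E)_*$ and invoking its locality yields $\mu_*(R|_V)(\Phi')|_{W'} = (\mu_* R)(\Phi'')|_{W'}$, which is precisely the defining relation of $(\mu_* R)|_{\underline\mu(V)}$. All the bookkeeping of restrictions across the diffeomorphism is concentrated in this one compatibility check.
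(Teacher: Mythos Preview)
Your proposal is correct and follows essentially the same approach as the paper. The paper proves \ref{tensor_restrict.1} in detail (using Lemma \ref{sk_cut} to produce a specific $\Phi'$ rather than working with an arbitrary compatible pair, but this is an inessential variation given the uniqueness clause of Theorem \ref{eloc_restriction}) and then simply states that \ref{tensor_restrict.2}--\ref{tensor_restrict.4} ``are proven similarly''; your write-up supplies exactly those similar arguments, including the key compatibility check $|_W \circ (\Lso_{X|_V}\Phi) \circ |_V = |_W \circ (\Lso_X\Phi')$ for the $\Lieh$ case and the transport of the defining relation through $\mu^*$ for \ref{tensor_restrict.2}. Your observation that \ref{tensor_restrict.4} tacitly requires $X,Y$ to be local (so that the restrictions are defined via Theorem \ref{eloc_restriction}) is also correct.
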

\begin{proof}\checkedproof 
\ref{tensor_restrict.1}:
let $W$ be open such that $\overline{W} \subseteq V$ is compact, $\Phi \in \VSO{V, \Delta}$, and choose $\Phi' \in \VSO{U, \Delta}$ by Lemma \ref{sk_cut} such that $|_W \circ \Phi \circ |_V = |_W \circ \Phi'$. Because this implies $R|_V (\Phi)|_W = R(\Phi')|_W$ and $S|_V(\Phi)|_W = S(\Phi')|_W$, we obtain $(R|_V \otimes S|_V)(\Phi)|_W = R|_V(\Phi)|_W \otimes S|_V(\Phi)|_W = R(\Phi')|_W \otimes S(\Phi')|_W = (R \otimes S)(\Phi')|_W$, which by Theorem \ref{eloc_restriction} gives the claim.

\ref{tensor_restrict.2} -- \ref{tensor_restrict.4} are proven similarly.
\end{proof}

The statements of Lemma \ref{tensor_restrict} are also valid for the quotient $\Gloc{U, E}\Delta$.

We next show that moderateness and negligibility are local properties.

\begin{proposition}\label{locmodneg}
Let $U \subseteq M$ be open and $(U_\lambda)_\lambda$ an open cover of $U$. Then $R \in \Gbloc{U,E}\Delta$ is moderate or negligible, respectively, if and only if all $R|_{U_\lambda}$ are so.
\end{proposition}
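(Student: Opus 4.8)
The plan is to reduce both implications to \emph{local} seminorm estimates and then transport these estimates between $U$ and the $U_\lambda$ by restricting, respectively extending, test objects. Throughout I would work with the simplified moderateness and negligibility tests of Theorem~\ref{simplemod}, so that only single test objects $(\Phi_\e)_\e \in \TOv{U,\Delta}$ and $0$-test objects $(\Psi_{i,\e})_\e \in \TOvz{U,\Delta}$ need to be considered rather than uniform families. Since $\Gamma(E)$ carries the projective topology with respect to restrictions to relatively compact open sets, it suffices, as noted after Definition~\ref{def_testing}, to estimate $\csna((\ud^j R)(\Phi_\e)(\Psi_{1,\e},\dotsc,\Psi_{j,\e})|_W)$ for $\csna \in \csn{\Gamma(W,E)}$ with $W \comp U$ (respectively $W \comp U_\lambda$). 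Both directions then hinge on the restriction of local generalized sections from Theorem~\ref{eloc_restriction} together with its stated derivative version.

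For the direction that moderateness (resp.\ negligibility) of $R$ implies that of each $R|_{U_\lambda}$, I would fix $\csna \in \csn{\Gamma(W,E)}$ with $W \comp U_\lambda$, an order $j$, and test objects $(\Phi_\e)_\e \in \TOv{U_\lambda,\Delta}$, $(\Psi_{i,\e})_\e \in \TOvz{U_\lambda,\Delta}$. Applying Lemma~\ref{sk_cut} componentwise over $\Delta$ (once for test objects, once for $0$-test objects), I would choose matching $(\Phi'_\e)_\e \in \TOv{U,\Delta}$ and $(\Psi'_{i,\e})_\e \in \TOvz{U,\Delta}$ with $|_W \circ \Phi_\e \circ |_{U_\lambda} = |_W \circ \Phi'_\e$ and $|_W \circ \Psi_{i,\e} \circ |_{U_\lambda} = |_W \circ \Psi'_{i,\e}$. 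The derivative formula in Theorem~\ref{eloc_restriction} then gives $\ud^j(R|_{U_\lambda})(\Phi_\e)(\Psi_{1,\e},\dotsc,\Psi_{j,\e})|_W = (\ud^j R)(\Phi'_\e)(\Psi'_{1,\e},\dotsc,\Psi'_{j,\e})|_W$, so the sought estimate for $R|_{U_\lambda}$ on $W$ is literally the estimate for $R$ on $W$, which holds by hypothesis.

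For the converse, assuming all $R|_{U_\lambda}$ are moderate (resp.\ negligible), I would fix $\csna$, $j$, test objects on $U$, and a set $W \comp U$. Covering the compact $\overline W$ by finitely many relatively compact open sets $W_1,\dotsc,W_k$ with $W_i \comp U_{\lambda_i}$, I would bound the seminorm of a section over $W$ by the maximum over $i$ of a seminorm of its restriction to $W_i$. On each $W_i$ I would replace $(\Phi_\e)_\e$ and the $(\Psi_{l,\e})_\e$ by their restrictions $(\rSO_{U_{\lambda_i},U}\Phi_\e)_\e$, etc., which are test objects (resp.\ $0$-test objects) on $U_{\lambda_i}$ by Proposition~\ref{skproploc} together with \ref{9.0}. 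By Remark~\ref{alpharemark} these restrictions satisfy $|_{W_i} \circ \rSO_{U_{\lambda_i},U}\Phi_\e \circ |_{U_{\lambda_i}} = |_{W_i} \circ \Phi_\e$ for all small $\e$, and likewise for the $\Psi_{l,\e}$; hence Theorem~\ref{eloc_restriction} yields $(\ud^j R)(\Phi_\e)(\Psi_{1,\e},\dotsc,\Psi_{j,\e})|_{W_i} = \ud^j(R|_{U_{\lambda_i}})(\rSO_{U_{\lambda_i},U}\Phi_\e)(\rSO_{U_{\lambda_i},U}\Psi_{1,\e},\dotsc)|_{W_i}$ for small $\e$. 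The right-hand side obeys the required bound because $R|_{U_{\lambda_i}}$ is moderate (resp.\ negligible), and taking the maximum over the finitely many $i$ produces a single exponent $N$ (resp.\ the estimate $O(\e^m)$) valid on $W$.

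The only genuinely delicate point I anticipate is the bookkeeping around ``for all sufficiently small $\e$'': the matching identities coming from Remark~\ref{alpharemark} and Theorem~\ref{eloc_restriction} hold only below a threshold $\e_0$ depending on the chosen piece, but because the cover $\{W_i\}$ is finite one may take the minimum of finitely many thresholds, leaving the asymptotic estimates unaffected. Beyond this, one must merely confirm that the cut-off extension of Lemma~\ref{sk_cut} and the restriction $\rSO_{U_{\lambda_i},U}$ genuinely preserve the test-object and $0$-test-object conditions, which is exactly what Lemma~\ref{sk_cut}, Proposition~\ref{skproploc} and \ref{9.0} supply, and that the derivative version of Theorem~\ref{eloc_restriction} is available for every order $j$, which it is. With these points settled, both implications follow.
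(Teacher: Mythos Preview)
Your proposal is correct and follows essentially the same approach as the paper: both directions use Theorem~\ref{eloc_restriction} (with its derivative version) to transfer the estimates, extending test objects via Lemma~\ref{sk_cut} in one direction and restricting them via $\rSO_{U_\lambda,U}$ in the other. The only cosmetic difference is that in the converse the paper directly tests on relatively compact $K$ with $\overline K \subseteq U_\lambda$ for some $\lambda$ (such sets already form an open cover of $U$, so one may use them for the projective topology), whereas you cover a general $W \comp U$ by finitely many such pieces; this is equivalent and your handling of the finitely many $\e$-thresholds is exactly what is needed.
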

\begin{proof}
 Let $R \in \Gbloc{U, E}\Delta$.  In order to test $R|_{U_\lambda}$ for moderateness it suffices to know
$(\ud^j R|_{U_\lambda})(\Phi_{0,\e})(\Phi_{1,\e}, \dotsc, \Phi_{j,\e})|_K$
for $j \in \bN_0$, $(\Phi_{0,\e})_\e \in \TOv{U_\lambda, E}$, $(\Phi_{i,\e})_\e \in \TOvz{U_\lambda, E}$ ($i=1 \dotsc j$), and $K$ open and relatively compact in $U_\lambda$. This equals
$(\ud^j R)(\Phi'_{0,\e})(\Phi'_{1,\e}, \dotsc, \Phi'_{j,\e})|_K$
where $(\Phi'_{0,\e})_\e \in \TOv{U,E}$ and $(\Phi'_{i,\e})_\e \in \TOvz{U,E}$ are obtained from Lemma \ref{sk_cut} such that $|_K \circ \Phi_{i,\e} \circ |_{U_\lambda} = |_K \circ \Phi'_{i,\e}$ ($i=0 \dotsc j$).
With this, moderateness or negligibility of $R|_{U_\lambda}$ follows immediately from the corresponding property of $R$.
 
Conversely, assume all $R|_{U_\lambda}$ to be moderate or negligible, respectively. It then suffices to know
$(\ud^j R)(\Phi_{0,\e})(\Phi_{1,\e}, \dotsc, \Phi_{j,\e})|_K$
for $j \in \bN_0$, $(\Phi_{0,\e})_\e \in \TOv{U,E}$, $(\Phi_{i,\e})_\e \in \TOvz{U,E}$ ($i=1 \dotsc j$), where $K$ is open, relatively compact and $\overline{K} \subseteq U_\lambda$ for some $\lambda$. Because this expression equals
\[ (\ud^j R|_{U_\lambda}) ( \rSO_{U_\lambda, U} \Phi_{0,\e})(\rSO_{U_\lambda, U} \Phi_{1,\e}, \dotsc, \rSO_{U_\lambda, U} \Phi_{j,\e})|_K \]
for small $\e$, moderateness or negligibility of $R$ follows.
\end{proof}

\begin{corollary} Restriction descends to $\Gloc E\Delta$ by setting $[R]|_V \coleq [R|_V]$. Therefore, $\Gloc{\_, E}\Delta$ is a presheaf of $\SGloc\Delta$-modules on $M$.
\end{corollary}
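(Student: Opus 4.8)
The plan is to reduce the statement to the two structural facts already available for the basic local spaces: the presheaf property of $U \mapsto \Gbloc{U, E}\Delta$ from Theorem \ref{eloc_restriction}, together with the fact that moderateness and negligibility are local conditions, established in Proposition \ref{locmodneg}. No new analytic estimate is needed; everything becomes formal once these are in hand.

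First I would establish that $[R]|_V \coleq [R|_V]$ is well defined. Since $R \in \Gmloc{U, E}\Delta$ is in particular local, Theorem \ref{eloc_restriction} produces $R|_V \in \Gbloc{V, E}\Delta$, so the only points to check are that restriction preserves moderateness and negligibility. Both follow by applying Proposition \ref{locmodneg} to the open cover $\{U, V\}$ of $U$ (which is a cover since $U \cup V = U$ as $V \subseteq U$): if $R$ is moderate, respectively negligible, then so is each of $R|_U = R$ and $R|_V$, in particular $R|_V$. Hence $R|_V \in \Gmloc{V, E}\Delta$, respectively $\Gnloc{V, E}\Delta$. Well-definedness on classes then follows from additivity of restriction: if $R - R' \in \Gnloc{U, E}\Delta$, then $R|_V - R'|_V = (R - R')|_V \in \Gnloc{V, E}\Delta$, so $[R|_V] = [R'|_V]$.

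Next I would verify the presheaf axioms for $U \mapsto \Gloc{U, E}\Delta$. Additivity and $C^\infty(M)$-linearity of each restriction map descend directly from the corresponding properties on $\Gbloc{U, E}\Delta$, additivity being a consequence of the uniqueness clause of Theorem \ref{eloc_restriction}. The transitivity $([R]|_V)|_W = [R]|_W$ for $W \subseteq V \subseteq U$ and the normalization $[R]|_U = [R]$ descend from the identities $(R|_V)|_W = R|_W$ and $R|_U = R$ of the same theorem, so $U \mapsto \Gloc{U, E}\Delta$ is a presheaf of $C^\infty(M)$-modules.

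Finally, to upgrade this to a presheaf of $\SGloc\Delta$-modules I would check compatibility of restriction with multiplication by scalar generalized functions, namely $(f \cdot R)|_V = f|_V \cdot R|_V$ for $f \in \Gloc{U}\Delta$ and $R \in \Gloc{U, E}\Delta$. As such scalar multiplication is the instance $F = M \times \bK$ of the tensor product, this is exactly Lemma \ref{tensor_restrict} \ref{tensor_restrict.1}, which is already noted to remain valid on the quotient $\Gloc{U, E}\Delta$. I do not expect any genuine obstacle: the only delicate point is the quotient well-definedness, i.e.\ that restriction respects the negligible submodule, and this is supplied precisely by the negligibility half of Proposition \ref{locmodneg}.
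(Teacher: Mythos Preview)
Your proposal is correct and follows precisely the approach the paper intends: the corollary is stated without proof, as an immediate consequence of Proposition \ref{locmodneg} (for well-definedness on the quotient), Theorem \ref{eloc_restriction} (for the presheaf identities), and Lemma \ref{tensor_restrict} (for compatibility with the $\SGloc\Delta$-module structure). Your use of the cover $\{U,V\}$ to invoke the biconditional of Proposition \ref{locmodneg} is a clean way to extract the needed direction; one could equally note that the forward implication in that proposition's proof does not actually require the $U_\lambda$ to cover $U$, but your formulation is perfectly valid.
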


\begin{theorem}\label{gsheaf}$\Gloc{\_, E}\Delta$ is a sheaf of $\SGloc\Delta$-modules on $M$.
\end{theorem}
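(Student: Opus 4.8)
The plan is to verify the two sheaf axioms for the presheaf $\Gloc{\_, E}\Delta$ whose restriction maps were just constructed. The separation axiom is immediate from Proposition \ref{locmodneg}: if $R \in \Gmloc{U,E}\Delta$ represents a class whose restriction to each member $U_\lambda$ of an open cover of $U$ vanishes, then every $R|_{U_\lambda}$ is negligible, and since $R$ is local, Proposition \ref{locmodneg} forces $R$ itself to be negligible, so $[R]=0$. Applied to a difference of two sections this gives the separation axiom, and uniqueness of any glued section follows from it.

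For the gluing axiom, fix representatives $R_\lambda \in \Gmloc{U_\lambda, E}\Delta$ of a coherent family, so the cocycle condition reads $R_\lambda|_{U_\lambda\cap U_\mu} - R_\mu|_{U_\lambda\cap U_\mu}\in \Gnloc{U_\lambda\cap U_\mu, E}\Delta$ for all $\lambda,\mu$. With a partition of unity $(\chi_\lambda)_\lambda$ subordinate to $(U_\lambda)_\lambda$ I define $R\in\Gb{U,E}\Delta$ by
\[ R(\Phi) \coleq \sum_\lambda \chi_\lambda\cdot R_\lambda(\rSO_{U_\lambda,U}\Phi)\qquad(\Phi\in\VSO{U,\Delta}), \]
where $\rSO_{U_\lambda,U}$ acts componentwise and each summand is extended by zero off $\supp\chi_\lambda\subseteq U_\lambda$; this is a well-defined smooth map because $\Phi\mapsto\rSO_{U_\lambda,U}\Phi$ is linear and continuous (Theorem \ref{sk_restriction}) and the sum is locally finite. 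To see that $R$ is local, suppose $|_W\circ\Phi=|_W\circ\Phi'$ for open $W\subseteq U$. Then $\ev_p\circ\Phi=\ev_p\circ\Phi'$ for every $p\in W$, so Theorem \ref{sk_restriction} \ref{9.2} gives $\ev_p\circ\rSO_{U_\lambda,U}\Phi=\ev_p\circ\rSO_{U_\lambda,U}\Phi'$ for $p\in W\cap U_\lambda$; by locality of $R_\lambda$ the two summands then agree on $W\cap U_\lambda$, and summing (each term vanishing outside $\supp\chi_\lambda$) yields $R(\Phi)|_W=R(\Phi')|_W$. Hence $R\in\Gbloc{U,E}\Delta$.

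The crux is the consistency identity $R|_{U_\mu}-R_\mu\in\Gnloc{U_\mu,E}\Delta$ for every $\mu$. Because negligibility is local (Proposition \ref{locmodneg}), it suffices to prove it on relatively compact neighbourhoods $V$ with $\overline V\subseteq U_\mu$ meeting only finitely many supports $\supp\chi_\lambda$, say for $\lambda$ in a finite set $F$, so that $\sum_{\lambda\in F}\chi_\lambda\equiv 1$ on $V$ and each $\supp\chi_\lambda\cap\overline V$ is a compact subset of $U_\lambda\cap U_\mu$. For a compact $K\subseteq V$ and test objects on $V$, I would use the restriction characterisation of Theorem \ref{eloc_restriction} together with Lemma \ref{sk_cut} and Remark \ref{alpharemark} to evaluate both $R|_V$ (through $R$ on $U$, via matched operators) and $R_\mu|_V$ (through $R_\mu$ on $U_\mu$, via matched operators) on $K$; invoking $\sum_{\lambda\in F}\chi_\lambda=1$, the difference collapses to $\sum_{\lambda\in F}\chi_\lambda\,(R_\lambda-R_\mu)$ evaluated, after restriction to $U_\lambda\cap U_\mu$, on matched smoothing operators. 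Each summand is then $O(\e^m)$ by the cocycle negligibility on $U_\lambda\cap U_\mu$, and the finite sum is negligible; the derivative version of Theorem \ref{eloc_restriction} handles the terms $\ud^j$ verbatim.

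Finally, from $R|_{U_\mu}=R_\mu+(\text{negligible})$ and moderateness of $R_\mu$ each $R|_{U_\mu}$ is moderate, so Proposition \ref{locmodneg} gives $R\in\Gmloc{U,E}\Delta$; thus $[R]\in\Gloc{U,E}\Delta$ satisfies $[R]|_{U_\mu}=[R_\mu]$, and separation provides uniqueness, completing the sheaf axioms. I expect the genuine difficulty to lie entirely in the consistency step: all the matching of smoothing operators through $\rSO_{U_\lambda,U}$, the partition of unity, and the three restriction maps $\cdot|_V$ must be arranged so that the cocycle relation can be applied term by term, and keeping these matchings mutually compatible on a neighbourhood of each $\supp\chi_\lambda\cap K$ is the bookkeeping-heavy obstacle, even though conceptually it mirrors the scalar argument of \cite{papernew}.
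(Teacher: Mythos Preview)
Your proposal is correct and follows essentially the same route as the paper: the same partition-of-unity gluing formula $R(\Phi)=\sum_\lambda\chi_\lambda\cdot R_\lambda(\rSO_{U_\lambda,U}\Phi)$, the same locality argument via Theorem \ref{sk_restriction} \ref{9.2}, and the same reduction of the consistency step to the cocycle negligibility on overlaps using Lemma \ref{sk_cut} and Remark \ref{alpharemark}. The only cosmetic difference is order: the paper verifies moderateness of $R$ directly before proving $R|_{U_\mu}-R_\mu$ negligible, whereas you deduce moderateness a posteriori from consistency plus moderateness of the $R_\mu$ via Proposition \ref{locmodneg}; both are valid.
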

\begin{proof}
Let $U \subseteq M$ be open and $(U_\lambda)_\lambda$ an open cover of $U$.  Suppose we are given $[R_\lambda] \in \Gloc{U_\lambda,E}\Delta$ for each $\lambda$ with $[R_\lambda]|_{U_\lambda \cap U_\mu} = [R_\mu]|_{U_\lambda \cap U_\mu}$ $\forall \lambda,\mu$. We need to define $R \in \Gmloc{U,E}\Delta$ such that $[R]|_{U_\mu} = [R_\mu]$ $\forall \mu$; this $R$ then will be unique with this property because of Proposition \ref{locmodneg}.

Choose a smooth partition of unity $(\mu_\lambda)_\lambda$ on $U$ subordinate to $(U_\lambda)_\lambda$ and for each $\lambda$ a function $\rho_\lambda \in C^\infty(U)$ such that $\rho_\lambda \equiv 1$ on an open neighbourhood of $\supp \mu_\lambda$ and $\supp \rho_\lambda \subseteq U_\lambda$. Then define $R \in C^\infty(\VSO \Delta, \Gamma(E))$ by
\begin{equation}\label{gluesum}
R(\Phi) \coleq \sum_\lambda \mu_\lambda \cdot R_\lambda ( \rSO_{U_\lambda, U}\Phi)
\end{equation}
Smoothness of $R$ is implied by smoothness of all $|_K \circ R$ for $K$ in an open cover of $U$, where we can assume that each $K$ is open and relatively compact. In this case there is a finite set $F$ such that $R(\Phi)|_K$ is given by \eqref{gluesum} with the sum only over $\lambda \in F$, for which smoothness is immediate.

We then have to show that $R$ is local. Fix $\Phi,\Phi' \in \VSO \Delta$ such that $|_W \circ \Phi = |_W \circ \Phi'$, where $W \subseteq U$ is open. Then for each $\lambda$ we have $\ev_p \circ \rSO_{U_\lambda, U}\Phi = \ev_p \circ \rSO_{U_\lambda, U}\Phi'$ for $p \in W \cap \carr \mu_\lambda$ because of Theorem \ref{sk_restriction} \ref{9.1.1}, hence $\mu_\lambda(p) \cdot R_\lambda (\rSO_{U_\lambda, U}\Phi)(p) = \mu_\lambda(p) \cdot R_\lambda ( \rSO_{U_\lambda, U}\Phi') (p)$ for $p \in W$ and $R$ is local.

In order to test $R$ for moderateness it suffices to know
\[ (\ud^j R)(\Phi_{0,\e})(\Phi_{1,\e},\dotsc,\Phi_{j,\e})|_K \]
for $j \in \bN_0$, $(\Phi_{0,\e})_\e \in \TOv{U, \Delta}$, $(\Phi_{i,\e})_\e \in \TOvz{U, \Delta}$ ($i=1 \dotsc j$) and $K$ open and relatively compact with $\overline{K} \subseteq U$. This expression is given by
\[ \sum_{\lambda \in F} \mu_\lambda|_K \cdot (\ud^j R_\lambda)(\rSO_{U_\lambda, U} \Phi_{0,\e} )(\rSO_{U_\lambda, U} \Phi_{1,\e} ,\dotsc,\rSO_{U_\lambda, U} \Phi_{j,\e} )|_{K \cap U_\lambda} \]
for finite $F$.
It suffices to estimate for each $\lambda \in F$ the expression
\[
(\ud^j R_\lambda)(\rSO_{U_\lambda, U} \Phi_{0,\e} )(\rSO_{U_\lambda, U} \Phi_{1,\e} ,\dotsc,\rSO_{U_\lambda, U} \Phi_{j,\e} )|_{\supp \mu_\lambda \cap K}
\]
which is moderate by assumption.

Next, we need to show that $R|_{U_\mu} - R_\mu$ is negligible for all $\mu$. Fix $(\Phi_\e)_\e \in \TOv{U_\mu, \Delta}$ for testing. Suppose we want to test $(R|_{U_\mu} - R_\mu)(\Phi_\e)$ on an open relatively compact set $K$ with $\overline{K} \subseteq U_\mu$. Choose $(\Phi'_\e)_\e \in \TOv{U, \Delta}$ by Lemma \ref{sk_cut} such that $|_K \circ \Phi_\e \circ |_{U_\mu} = |_K \circ \Phi_\e'$, which gives $R|_{U_\mu}(\Phi_\e)|_K = R(\Phi_\e')|_K$ for small $\e$. Furthermore, $R_\mu(\Phi_\e)|_K = R_\mu(\rSO_{U_\mu, U}\Phi'_\e)|_K$ because $|_K \circ \Phi_\e = |_K \circ (\rSO_{U_\mu, U}\Phi'_\e)$ for small $\e$:
let $f \in C^\infty(U)$ with $\supp f \subseteq U_\mu$ and $f \equiv 1$ in an open neighborhood of $K$. Then for $u \in \cD'(U_\mu, E)$ and small $\e$, 
$(\rSO_{U_\mu, U}\Phi'_\e)(u)|_K = \Phi'_\e(f \cdot u)|_K = \Phi_\e((f \cdot u)|_{U_\mu})|_K = \Phi_\e(u)|_K$. Hence, we can write $(R|_{U_\mu}-R_\mu)(\Phi_\e)|_K$ as
\[
\sum_{\lambda \in F} \mu_\lambda|_K \cdot \bigl( R_\lambda ( \rSO_{U_\lambda, U} \Phi'_\e )|_K - R_\mu ( \rSO_{U_\mu, U} \Phi'_\e)|_K\bigr).
\]
Because $R_\lambda ( \rSO_{U_\lambda, U} \Phi'_\e ) |_K - R_\mu ( \rSO_{U_\mu, U} \Phi'_\e )|_K = (R_\lambda|_{U_\lambda \cap U_\mu} - R_\mu|_{U_\lambda \cap U_\mu}) ( \rSO_{U_\lambda \cap U_\mu, U} \Phi'_\e)|_K$ for small $\e$, negligibility follows from the assumption.
\end{proof}

\begin{theorem}The embedding $\iota \colon \cD'(M, E) \to \Gloc E{\{E\}}$ is a sheaf morphism, i.e., for any open subset $M' \subseteq M$ and $u \in \cD'(M,E)$ we have $\iota(u)|_{M'} = \iota(u|_{M'})$.
\end{theorem}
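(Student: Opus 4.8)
The plan is to reduce everything to the \emph{uniqueness} clause of Theorem \ref{eloc_restriction}, which makes the statement essentially computation-free. Fix $u \in \cD'(M, E)$ and an open subset $M' \subseteq M$, and set $R \coleq \iota(u) \in \Gbloc E{\{E\}}$, which is local as recorded after Definition \ref{def_locprop}. On the quotient, $\iota(u)|_{M'}$ is by definition the class $[\,R|_{M'}\,]$, where $R|_{M'} \in \Gbloc{M', E}{\{E\}}$ is the restriction furnished by Theorem \ref{eloc_restriction}; on the other hand $\iota(u|_{M'})$ is (the class of) the local section $\Phi \mapsto \Phi(u|_{M'})$ over $M'$, again local by the embedding being local, now applied to $E|_{M'}$. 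It therefore suffices to prove the equality of representatives $R|_{M'} = \iota(u|_{M'})$ in the basic space over $M'$.

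First I would record the two defining formulas $R(\Phi') = \Phi'(u)$ for $\Phi' \in \VSO{M, E}$ and $\iota(u|_{M'})(\Phi) = \Phi(u|_{M'})$ for $\Phi \in \VSO{M', E}$. By the uniqueness part of Theorem \ref{eloc_restriction}, $R|_{M'}$ is the \emph{only} local generalized section over $M'$ with the property that for every open $W \subseteq M'$, every $\Phi \in \VSO{M', E}$ and every $\Phi' \in \VSO{M, E}$ satisfying $|_W \circ \Phi \circ |_{M'} = |_W \circ \Phi'$ on $\cD'(M,E)$ one has $R|_{M'}(\Phi)|_W = R(\Phi')|_W$. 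Thus the entire proof consists in checking that $\iota(u|_{M'})$ enjoys this same property.

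So let such $W$, $\Phi$, $\Phi'$ be given. The crucial move is simply to evaluate the operator identity $|_W \circ \Phi \circ |_{M'} = |_W \circ \Phi'$ at the \emph{single fixed} distribution $u$, which yields $\Phi(u|_{M'})|_W = \Phi'(u)|_W$. The left-hand side is $\iota(u|_{M'})(\Phi)|_W$ and the right-hand side is $R(\Phi')|_W$, so indeed $\iota(u|_{M'})(\Phi)|_W = R(\Phi')|_W$. Since $\iota(u|_{M'})$ is local, it satisfies the defining property of $R|_{M'}$, and uniqueness forces $R|_{M'} = \iota(u|_{M'})$. Passing to classes in $\Gloc{M', E}{\{E\}}$ (restriction descending to the quotient as established after Proposition \ref{locmodneg}) then gives $\iota(u)|_{M'} = \iota(u|_{M'})$, as claimed.

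I expect no genuine obstacle here: the whole content is the observation that the compatibility condition relating a smoothing operator on $M'$ to one on $M$, when applied to $u$, says precisely that the local regularizations of $u|_{M'}$ and of $u$ agree on $W$. The only things to keep straight are that $\iota$ lands among local sections both over $M$ and over $M'$, and that one is carefully comparing the genuine restriction $R|_{M'}$ of Theorem \ref{eloc_restriction} against the \emph{direct} embedding of the restricted distribution $u|_{M'}$; once these are identified, the linearity of $\iota$ in the smoothing-operator slot renders the verification immediate.
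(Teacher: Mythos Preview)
Your proof is correct and in fact proves the stronger statement that $\iota(u)|_{M'} = \iota(u|_{M'})$ already holds in the basic space $\Gbloc{M',E}{\{E\}}$, not merely after passing to the quotient. Your route is genuinely different from the paper's: the paper works at the level of test objects, picking for each relatively compact $K \subseteq M'$ and each $(\Phi_\e)_\e \in \TOv{M',E}$ a corresponding $(\Psi_\e)_\e \in \TOv{M,E}$ via Lemma \ref{sk_cut} with $|_K \circ \Phi_\e \circ |_{M'} = |_K \circ \Psi_\e$ for small $\e$, and then computes that $(\iota(u)|_{M'}(\Phi_\e) - \iota(u|_{M'})(\Phi_\e))|_K = 0$ for small $\e$, invoking Theorem \ref{nonegder} to conclude negligibility. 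By contrast, you bypass test objects entirely and exploit the uniqueness clause of Theorem \ref{eloc_restriction}: since the compatibility condition $|_W \circ \Phi \circ |_{M'} = |_W \circ \Phi'$ applied to $u$ immediately gives $\iota(u|_{M'})(\Phi)|_W = R(\Phi')|_W$, and $\iota(u|_{M'})$ is local, uniqueness forces it to equal $R|_{M'}$. Your argument is shorter, avoids the machinery of test objects and the appeal to Theorem \ref{nonegder}, and yields the sharper conclusion; the paper's approach, on the other hand, illustrates the typical pattern of verifying identities in the quotient via the negligibility test, which is the method one must fall back on when exact equality of representatives is not available.
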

\begin{proof}
Given a relatively compact open subset $K \subseteq M'$ and a test object $(\Phi_\e)_\e \in \TOv {M',E}$, choose by Lemma \ref{sk_cut} a test object $(\Psi_\e)_\e \in \TOv {M', E}$ such that $|_K \circ \Phi_\e \circ |_{M'} = |_K \circ \Psi_\e$ for small $\e$. Then the respective test of Theorem \ref{nonegder} estimates, for $x \in K$,
\begin{align*}
 (\iota(u)|_{M'}(\Phi_\e) - \iota(u|_{M'})(\Phi_\e))|_K &= (\iota(u)(\Psi_\e) - \iota(u|_{M'})(\Phi_\e))|_K \\
&= (\Psi_\e(u) - \Phi_\e(u|_{M'})|_K =0. \qedhere
\end{align*}
\end{proof}

\subsection{Local coordinates}

We will now use the sheaf property of $\Gloc{\_,E}\Delta$ to recover the known coordinate formulas of smooth differential geometry.

Suppose $U \subseteq M$ is open such that $E$ is trivializable over $U$ and $\Gamma(U,E)$ has a basis $(b_i)_i$ with dual basis $(\beta^j)_j$ of $\Gamma(U, E^*)$. Then the coordinates of $R \in \Gbloc E\Delta$ on $U$ are given by $R^i \coleq R|_U \cdot \beta^i \in \Gbloc{U}\Delta$.

Suppose now we are given an open cover $(U_\alpha)_\alpha$ of $M$ such that $E$ is trivial over each $U_\alpha$. Then $R$ has coordinates $R_\alpha^i \in \Gbloc{U_\alpha}\Delta$ ($i=1 \dotsc \dim E$) on each $U_\alpha$, which are enough to characterize moderateness, negligiblity and association as follows:

\begin{proposition}
\begin{enumerate}[label=(\roman*)]
\item\label{coordchar.1} $R \in \Gbloc E \Delta$ is moderate or negligible if and only if all $R_\alpha^i$ are.
\item\label{coordchar.2} $R \approx 0$ if and only if $R_\alpha^i \approx 0$ for all $i$ and $\alpha$.
\end{enumerate}
\end{proposition}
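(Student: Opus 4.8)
The plan is to reduce both assertions to facts already in hand, treating the trivializing sets $U_\alpha$ as the place where a vector-valued statement splits into scalar ones. For \ref{coordchar.1} the two ingredients are Proposition~\ref{locmodneg}, which localizes moderateness and negligibility along an open cover, and Proposition~\ref{modalgprop}~\ref{modalgprop.3}, which says these properties may be checked componentwise over a trivializing set. Concretely, I would first invoke Proposition~\ref{locmodneg} to see that $R \in \Gbloc E\Delta$ is moderate (resp.\ negligible) if and only if each $R|_{U_\alpha} \in \Gbloc{U_\alpha, E}\Delta$ is. Since $E$ is trivial over $U_\alpha$, applying Proposition~\ref{modalgprop}~\ref{modalgprop.3} with $M$, $E$ replaced by $U_\alpha$, $E|_{U_\alpha}$ then shows $R|_{U_\alpha}$ is moderate (resp.\ negligible) if and only if its coordinates with respect to the basis $(\beta^i)_i$ are; as these coordinates are exactly the $R_\alpha^i = R|_{U_\alpha}\cdot\beta^i$, and moderateness/negligibility is stable under $C^\infty(U_\alpha)$-linear combinations (so the finitely many basis coordinates suffice), the claim follows.

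For the forward direction of \ref{coordchar.2}, suppose $R \approx 0$ and fix $\alpha$. I would first show $R|_{U_\alpha} \approx 0$: given $(\Phi_\e)_\e \in \TOv{U_\alpha, \Delta}$ and $\varphi \in \Gamma_c(U_\alpha, E^*\otimes\Vol(M))$, choose a relatively compact open $K$ with $\supp\varphi \subseteq K$ and $\overline{K} \subseteq U_\alpha$, use Lemma~\ref{sk_cut} componentwise to obtain $(\Psi_\e)_\e \in \TOv{M, \Delta}$ with $|_K\circ\Phi_\e\circ|_{U_\alpha} = |_K\circ\Psi_\e$ for small $\e$, and apply Theorem~\ref{eloc_restriction} to get $R|_{U_\alpha}(\Phi_\e)|_K = R(\Psi_\e)|_K$. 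Because the pairing with $\varphi$ sees only values on $\supp\varphi \subseteq K$, this yields $\langle R|_{U_\alpha}(\Phi_\e), \varphi\rangle = \langle R(\Psi_\e), \varphi\rangle \to 0$, so $R|_{U_\alpha}\approx 0$. Contracting with $\beta^i$ and using the componentwise characterization of convergence in $\cD'$ recorded at the start of Section~\ref{sec_association} then gives $R_\alpha^i = R|_{U_\alpha}\cdot\beta^i \approx 0$.

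For the converse, assume all $R_\alpha^i \approx 0$ and fix $(\Phi_\e)_\e \in \TOv{M, \Delta}$ and $\varphi \in \Gamma_c(M, E^*\otimes\Vol(M))$. Using a partition of unity subordinate to $(U_\alpha)_\alpha$ I would write $\varphi = \sum_\alpha \varphi_\alpha$ as a finite sum with $\supp\varphi_\alpha \subseteq U_\alpha$, reducing to $\langle R(\Phi_\e), \varphi_\alpha\rangle \to 0$ for each $\alpha$. Choosing $K$ relatively compact with $\supp\varphi_\alpha \subseteq K \subseteq \overline{K} \subseteq U_\alpha$, Remark~\ref{alpharemark} together with Theorem~\ref{eloc_restriction} gives $R(\Phi_\e)|_K = R|_{U_\alpha}(\rSO_{U_\alpha, M}\Phi_\e)|_K$ for small $\e$, where $(\rSO_{U_\alpha, M}\Phi_\e)_\e$ is again a test object on $U_\alpha$ by Proposition~\ref{skproploc} and \ref{9.0}. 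Writing $R|_{U_\alpha} = \sum_i R_\alpha^i\, b_i$ and using that each $R_\alpha^i(\rSO_{U_\alpha, M}\Phi_\e) \to 0$ in $\cD'(U_\alpha)$ together with continuity of multiplication by the fixed sections $b_i$, I conclude $R|_{U_\alpha}(\rSO_{U_\alpha, M}\Phi_\e) \to 0$ in $\cD'(U_\alpha, E)$, whence $\langle R(\Phi_\e), \varphi_\alpha\rangle \to 0$; summing over the finitely many relevant $\alpha$ gives $R \approx 0$.

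The module-theoretic bookkeeping in \ref{coordchar.1} and the verifications that the various test-object identities hold for sufficiently small $\e$ are routine. The one genuinely delicate point is the passage between global and restricted test objects in \ref{coordchar.2}: one must match the localized identities supplied by Lemma~\ref{sk_cut} and Remark~\ref{alpharemark} precisely to the hypothesis $|_W\circ\Phi\circ|_V = |_W\circ\Phi'$ of Theorem~\ref{eloc_restriction}, and keep track of the direction of restriction (global-to-local via $\rSO_{U_\alpha, M}$ in the converse, local-to-global via Lemma~\ref{sk_cut} in the forward direction) so that the correct representative is being evaluated on each side.
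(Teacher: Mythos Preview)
Your argument is correct and follows the paper's approach exactly: part \ref{coordchar.1} is reduced to Proposition~\ref{locmodneg} together with Proposition~\ref{modalgprop}~\ref{modalgprop.3}, and part \ref{coordchar.2} is handled by the same local-to-global and global-to-local mechanism (Lemma~\ref{sk_cut} in one direction, $\rSO_{U_\alpha, M}$ in the other) that underlies the proof of Proposition~\ref{locmodneg}. The paper merely states this in a single sentence, while you have spelled out the details; nothing further is needed.
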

\begin{proof}
\ref{coordchar.1} follows immediately from Proposition \ref{locmodneg} and Proposition \ref{modalgprop} \ref{modalgprop.3}, while \ref{coordchar.2} is proved the same way as Proposition \ref{locmodneg}.
\end{proof}

In order to recover the usual coordinate formulas of smooth differential geometry we have to make some preparations.

\begin{lemma}\label{homloc}Let $R \in \Hom_{C^\infty(U)} ( \Gamma(U, E^*), \Gloc U\Delta)$, $s \in \Gamma(U, E^*)$ and $V \subseteq U$ open. Then $s|_V = 0$ implies $R(s)|_V = 0$.
\end{lemma}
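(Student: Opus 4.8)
The plan is to run the standard bump-function localization argument for module homomorphisms, using that $R$ is $C^\infty(U)$-linear and that restriction in $\Gloc U\Delta$ is compatible with the module multiplication. Since being zero is a local property of $\Gloc U\Delta$ (by the sheaf property of Theorem \ref{gsheaf}, or equivalently by the locality of negligibility in Proposition \ref{locmodneg}), it suffices to show that $R(s)$ vanishes in a neighborhood of every point of $V$.

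First I would fix $p \in V$ and, using paracompactness, choose a cut-off $\chi \in C^\infty(U)$ with $\chi \equiv 1$ on some open neighborhood $W \subseteq V$ of $p$ and $\supp \chi \subseteq V$. The elementary but crucial observation is that $\chi \cdot s = 0$ as an element of $\Gamma(U, E^*)$: on $\supp \chi \subseteq V$ we have $s = 0$ because $s|_V = 0$, while off $\supp \chi$ the factor $\chi$ vanishes, so the product is zero on all of $U$ (not merely on $V$).

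Next I would apply the $C^\infty(U)$-linearity of $R$ to obtain $\chi \cdot R(s) = R(\chi \cdot s) = R(0) = 0$ in $\Gloc U\Delta$. Restricting this identity to $W$ and invoking the compatibility $(\chi \cdot R(s))|_W = \chi|_W \cdot R(s)|_W$ from Theorem \ref{eloc_restriction} (which descends to the quotient), together with $\chi|_W \equiv 1$, gives $R(s)|_W = 0$. Using the presheaf identity $(R(s)|_V)|_W = R(s)|_W$ from Theorem \ref{eloc_restriction}, this shows that $R(s)|_V$ restricts to $0$ on each member $W$ of an open cover of $V$.

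Finally I would conclude $R(s)|_V = 0$ by invoking the sheaf property (Theorem \ref{gsheaf}), or equivalently by noting that a representative of $R(s)|_V$ is negligible on each $W$ and hence negligible on $V$ by Proposition \ref{locmodneg}. I do not expect any genuine obstacle: the argument is routine localization. The only two points that need a moment of care are verifying that $\chi \cdot s$ vanishes globally on $U$ rather than just on $V$, and that the product-restriction formula of Theorem \ref{eloc_restriction} is applied in the quotient $\Gloc U\Delta$ rather than on representatives; both are immediate.
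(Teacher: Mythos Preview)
Your argument is correct and is essentially the same bump-function localization as in the paper's proof: the paper writes $s=(1-f)s$ (equivalently $fs=0$) and obtains $R(s)|_W=(1-f)|_W\cdot R(s)|_W=0$, which is just the complementary way of phrasing your computation $\chi\cdot R(s)=R(\chi s)=0$. Both then conclude via the sheaf property of $\Gloc\_\Delta$.
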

\begin{proof} \proofs{\#13}
 Let $W$ be open with $\overline{W} \subseteq V$ and choose $f \in C^\infty(U)$ such that $\supp f \subseteq V$, $f \equiv 1$ on $W$. Then $s = (1-f) \cdot s$ and $R(s)|_W = R((1-f)s)|_W = (1-f)|_W \cdot R(s)|_W = 0$. Covering $V$ by such sets $W$ we find that $R(s)|_V = 0$ because $\Gloc \_\Delta$ is a sheaf.
\end{proof}

The following is proved in the same manner as Theorem \ref{eloc_restriction}.
\begin{lemma}\label{homrestr}Let $R \in \Hom_{C^\infty(U)} ( \Gamma(U, E^*), \Gloc U\Delta)$, $V \subseteq U$ open. There is a unique element $R|_V \in \Hom_{C^\infty(V)} ( \Gamma(V, E^*), \Gloc V\Delta)$ such that for any open subset $W \subseteq V$, $s \in \Gamma(V, E^*)$ and $\tilde s \in \Gamma(U, E^*)$ such that $s|_W = \tilde s|_W$ we have $R|_V (s)|_W = R (\tilde s)|_W$. Furthermore, $R|_{V_1}|_{V_2} = R|_{V_2}$ for $V_2 \subseteq V_1$.
\end{lemma}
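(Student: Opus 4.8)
The plan is to imitate the construction of Theorem~\ref{eloc_restriction} almost verbatim, with Lemma~\ref{homloc} playing the role of locality and the sheaf property of $\Gloc{\_,E}\Delta$ (Theorem~\ref{gsheaf}) providing the gluing. Fix $s \in \Gamma(V, E^*)$. To build $R|_V(s) \in \Gloc V\Delta$ I would cover $V$ by open sets $W$ with $\overline{W} \csub V$; for each such $W$ I choose a cutoff $f \in C^\infty(U)$ with $\supp f \csub V$ and $f \equiv 1$ on an open neighborhood of $\overline{W}$, let $\widetilde{fs} \in \Gamma(U, E^*)$ be the extension by zero of $fs$ (which is smooth precisely because $\supp f \csub V$), and declare $R|_V(s)|_W \coleq R(\widetilde{fs})|_W \in \Gloc W\Delta$.

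Second, I would check that this local prescription is independent of the choices and compatible on overlaps. If $\widetilde{fs}$ and $\widetilde{f's}$ arise from two cutoffs that are both $\equiv 1$ near $\overline{W}$, then $(\widetilde{fs} - \widetilde{f's})|_W = 0$, whence $R(\widetilde{fs} - \widetilde{f's})|_W = 0$ by Lemma~\ref{homloc} and linearity of $R$; the same argument on $W_1 \cap W_2$ shows that $R|_V(s)|_{W_1}$ and $R|_V(s)|_{W_2}$ agree there. By Theorem~\ref{gsheaf} they therefore glue to a unique $R|_V(s) \in \Gloc V\Delta$. The characterizing property is then immediate: for $s|_W = \tilde s|_W$ and relatively compact $W' \csub W$, a cutoff $f \equiv 1$ near $\overline{W'}$ gives $(\widetilde{fs} - \tilde s)|_{W'} = 0$, so Lemma~\ref{homloc} yields $R|_V(s)|_{W'} = R(\tilde s)|_{W'}$, and the general open $W$ follows by the sheaf property. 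Uniqueness of $R|_V$ is immediate, since any map with the stated property is pinned down on every such $W'$.

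Third, I would verify $C^\infty(V)$-linearity and transitivity of restriction. Additivity is clear. For $g \in C^\infty(V)$ the subtlety is that $g$ lives only on $V$ while $R$ is merely $C^\infty(U)$-linear; this is handled by a nested cutoff. Taking $\chi \in C^\infty(U)$ with $\supp \chi \csub V$ and $\chi \equiv 1$ on $\supp f$, the extension by zero of $fgs$ equals $\widetilde{\chi g} \cdot \widetilde{fs}$ (both sides vanish off $\supp f$ and coincide there, since $\chi \equiv 1$ where $f \neq 0$), so $R(\widetilde{fgs})|_W = \widetilde{\chi g}|_W \cdot R(\widetilde{fs})|_W = g|_W \cdot R|_V(s)|_W$ because $\chi \equiv 1$ on $W$; gluing over $W$ gives $R|_V(gs) = g \cdot R|_V(s)$. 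Finally, $R|_{V_1}|_{V_2} = R|_{V_2}$ for $V_2 \subseteq V_1$ follows by comparing the characterizing property of both sides on a common relatively compact $W \csub V_2$.

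I expect the $C^\infty(V)$-linearity to be the main obstacle, precisely because the scalar $g$ is defined only on $V$ and cannot be fed to $R$ directly; transferring it across the extension-by-zero through the auxiliary cutoff $\chi$ is the one point where the bookkeeping is not fully automatic. Everything else reduces mechanically to Lemma~\ref{homloc} together with the sheaf property.
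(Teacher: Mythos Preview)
Your proposal is correct and follows essentially the same approach the paper indicates: the paper only says the result ``is proved in the same manner as Theorem~\ref{eloc_restriction}'' (which in turn defers to the scalar case), and your cutoff-and-glue construction using Lemma~\ref{homloc} together with the sheaf property of $\Gloc{\_}\Delta$ is exactly that scheme spelled out in detail. Your treatment of $C^\infty(V)$-linearity via the nested cutoff $\chi$ is the right way to handle the one nontrivial point.
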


In other words, $\Hom_{C^\infty(\_)} ( \Gamma(\_, E^*), \Gloc \_\Delta)$ is a presheaf on $M$.
\begin{theorem}\label{gqlocpsiso}$\cG_{loc}(\_; E) \cong \Hom_{C^\infty(\_)} ( \Gamma(\_, E^*), \Gloc\_\Delta)$ is a presheaf isomorphism.
\end{theorem}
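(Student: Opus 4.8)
The plan is to promote the pointwise module isomorphism recorded just after the definition of $\Gloc E\Delta$ to a natural isomorphism of presheaves. For every open $U \subseteq M$ I would write down the explicit map
\[
 \Theta_U \colon \Gloc{U, E}\Delta \xrightarrow{\ \cong\ } \Hom_{C^\infty(U)}(\Gamma(U, E^*), \Gloc U\Delta), \qquad \Theta_U(R)(s)(\Phi) \coleq R(\Phi) \cdot s,
\]
for $R \in \Gloc{U,E}\Delta$, $s \in \Gamma(U, E^*)$ and $\Phi \in \VSO{U, \Delta}$. That each $\Theta_U$ is a well-defined $C^\infty(U)$-module isomorphism is precisely Theorem \ref{reprmod} applied to $E|_U \to U$, restricted to local representatives and passed to the quotient via Proposition \ref{modalgprop} \ref{modalgprop.3}; this is the isomorphism already asserted for $\Gloc{}{}$. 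Hence the only genuinely new content is \emph{naturality}, i.e. that $(\Theta_U)_U$ intertwines the restriction on $\Gloc{\_,E}\Delta$ (Theorem \ref{eloc_restriction}) with the restriction on the target presheaf (Lemma \ref{homrestr}).

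Fixing open $V \subseteq U$ and $R \in \Gloc{U,E}\Delta$, I must show $\Theta_V(R|_V) = \Theta_U(R)|_V$ in $\Hom_{C^\infty(V)}(\Gamma(V, E^*), \Gloc V\Delta)$. Since $\Gloc{\_}\Delta$ is a sheaf (Theorem \ref{gsheaf}), two such homomorphisms agree as soon as, for each fixed $s \in \Gamma(V, E^*)$, the scalar generalized functions $\Theta_V(R|_V)(s)$ and $\Theta_U(R)|_V(s)$ have equal restriction to every relatively compact open $W$ with $\overline W \subseteq V$. The key preliminary I would isolate is that contraction commutes with restriction: for $R \in \Gloc{U,E}\Delta$ and $t \in \Gamma(U, E^*)$ one has $(R \cdot t)|_V = R|_V \cdot (t|_V)$ in $\Gloc V\Delta$. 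This follows from the characterizing property of Theorem \ref{eloc_restriction}: evaluating either side at $\Phi \in \VSO{V, \Delta}$ and restricting to such a $W$, I pick $\Phi' \in \VSO{U, \Delta}$ with $|_W \circ \Phi \circ |_V = |_W \circ \Phi'$ by Lemma \ref{sk_cut} and obtain $R(\Phi')|_W \cdot t|_W$ in both cases, contraction being fibrewise.

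Then I fix $W$ and $s$, choose $f \in C^\infty(U)$ with $\supp f \subseteq V$ and $f \equiv 1$ on $W$, and set $\tilde s \coleq f \cdot s \in \Gamma(U, E^*)$, so $\tilde s|_W = s|_W$. Lemma \ref{homrestr} gives $\Theta_U(R)|_V(s)|_W = \Theta_U(R)(\tilde s)|_W = (R \cdot \tilde s)|_W$, and applying the commutation of contraction with restriction together with transitivity of restriction yields
\[
 (R \cdot \tilde s)|_W = \bigl(R|_V \cdot (\tilde s|_V)\bigr)|_W = (R|_V)|_W \cdot (\tilde s|_W) = (R|_V)|_W \cdot (s|_W) = \bigl((R|_V) \cdot s\bigr)|_W = \Theta_V(R|_V)(s)|_W.
\]
As this holds for all such $W$, the sheaf property forces $\Theta_U(R)|_V(s) = \Theta_V(R|_V)(s)$, and since $s$ was arbitrary, $\Theta_U(R)|_V = \Theta_V(R|_V)$. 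This is exactly the naturality square, so $\Theta$ is a presheaf isomorphism.

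The main obstacle is keeping the two a priori unrelated restriction operations aligned through $\Theta$: restriction on $\Gloc{\_,E}\Delta$ is defined by matching smoothing operators over $V$ and $U$ on relatively compact sets (Theorem \ref{eloc_restriction}, Lemma \ref{sk_cut}), whereas restriction on the $\Hom$-side (Lemma \ref{homrestr}) is defined by matching dual sections. The identity $(R \cdot t)|_V = R|_V \cdot (t|_V)$ is the bridge, and its verification is where one must check that the fibrewise nature of contraction, combined with the matching $\Phi \leftrightarrow \Phi'$, renders the extension ambiguity $\tilde s - s$ (which is supported away from $W$) invisible on $W$. Everything else is formal bookkeeping.
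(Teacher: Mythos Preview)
Your proposal is correct and follows essentially the same route as the paper: both reduce to relatively compact $W \subseteq V$ via the sheaf property, extend the dual section $s$ to some $\tilde s$ on $U$, and then match smoothing operators over $W$ to identify the two sides. Your version is organizationally a bit cleaner in that you isolate the identity $(R \cdot t)|_V = R|_V \cdot (t|_V)$ and argue at the representative level via the characterizing property of Theorem~\ref{eloc_restriction}, whereas the paper inlines this as a negligibility test with test objects; the only minor slip is citing Lemma~\ref{sk_cut} (which concerns nets of test objects) for the extension of a \emph{single} $\Phi \in \VSO{V,\Delta}$ to $\Phi' \in \VSO{U,\Delta}$, but that extension is immediate from a cutoff construction and does not affect the argument.
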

\begin{proof} \proofs{\#14}
 Let $U,V$ be open with $V \subseteq U$ and $[R] \in \Gloc{U, E}\Delta$. Denoting the respective isomorphism of Theorem \ref{reprmod} by $\varphi_U$, we need to show that $\varphi_U([R])|_V \cdot t = \varphi_V ( [R]|_V) \cdot t$ in $\Gloc{V,E}\Delta$ $\forall t \in \Gamma(V, E^*)$. By Theorem \ref{gsheaf} this is the case if $(\varphi_U([R])|_V \cdot t)|_W = (\varphi_V([R]|_V)\cdot t)|_W$ in $\Gloc{W,E}\Delta$ for all open sets $W$ with $\overline{W} \subseteq V$. Choose $\tilde t \in \Gamma(U,E^*)$ such that $t|_W = \tilde t|_W$. Then by the Lemmas \ref{homloc} and \ref{homrestr},
 \[ (\varphi_U([R])|_V \cdot t)|_W = (\varphi_U([R])|_V \cdot \tilde t|_V)|_W = (\varphi_U([R]) \cdot \tilde t)|_W = [(\varphi_U(R) \cdot \tilde t)|_W] \]
 while on the other hand
 \[ (\varphi_V([R]|_V)\cdot t)|_W = (\varphi_V([R]|_V) \cdot \tilde t|_V)|_W = [(\varphi_V(R|_V) \cdot \tilde t|_V)|_W]. \]
In order to test the difference of these expressions for negligibility fix an open relatively compact set $K$ with $\overline{K} \subseteq W$ and $(\Phi_\e)_\e \in \TOv{W, \Delta}$. Using Lemma \ref{sk_cut} choose
\begin{gather*}
 (\Psi^1_\e)_\e \in \TOv{U; \Delta} \text{ such that }|_K \circ \Phi_\e \circ |_W = |_K \circ \Psi^1_\e \text{ on }\cD'(U, \Delta) \text{ and}\\
 (\Psi^2_\e)_\e \in \TOv{V; \Delta} \text{ such that }|_K \circ \Phi_\e \circ |_W = |_K \circ \Psi^2_\e \text{ on }\cD'(V, \Delta),
\end{gather*}
and hence $|_K \circ \Psi^1_\e = |_K \circ \Phi_\e \circ |_W = |_K \circ \Phi_\e \circ |_W \circ |_V = |_K \circ \Psi^2_\e \circ |_V$. Then
\begin{align*}
 (\varphi_U(R)\cdot \tilde t)|_W(\Phi_\e)|_K - (&\varphi_V(R|_V)\cdot \tilde t|_V)|_W (\Phi_\e) \\
 & = (\varphi_U ( R) \cdot \tilde t)(\Psi^1_\e)|_K - (\varphi_V(R|_V)\cdot \tilde t|_V)(\Psi^2_\e)|_K \\
 &= (R(\Psi^1_\e) \cdot \tilde t)|_K - (R|_V (\Psi^2_\e)\cdot \tilde t|_V)|_K \\
 &= (R(\Psi^1_\e)|_K - R(\Psi^1_\e)|_K) \cdot \tilde t |_K = 0
\end{align*}
Conversely, $\varphi_U^{-1} (h)|_V = \varphi_V^{-1}(\varphi_V(\varphi_U^{-1}(h)|_V)) = \varphi_V^{-1}(\varphi_U(\varphi_U^{-1}(h))|_V) = \varphi_V^{-1}(h|_V)$ for all $h \in \Hom_{C^\infty(U)} ( \Gamma(U, E^*), \Gloc{U, E}\Delta$, which shows the claim.
\end{proof}

Note that also on $U \mapsto \Gloc U\Delta \otimes_{C^\infty(U)} \Gamma(E)$ we have a presheaf structure by setting $(F \otimes s)|_V = F|_V \otimes s|_V$. This way, if $R \in \Gloc{U,E}\Delta$ is given by $R = F \otimes s$ then $R|_V = F|_V \otimes s|_V$, which means:
\begin{proposition}
 $\Gloc{U,E}\Delta \otimes_{C^\infty(U)} \Gamma(U,E) \cong \Gloc{U,E}\Delta$ is a presheaf morphism.
 \end{proposition}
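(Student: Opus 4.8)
The plan is to exhibit the family of module isomorphisms $\varphi_U\colon \Gloc U\Delta \otimes_{C^\infty(U)} \Gamma(U,E) \to \Gloc{U,E}\Delta$ furnished by the local quotient counterpart of Theorem \ref{reprmod}, and to verify that it intertwines the two presheaf restrictions: the componentwise restriction $(F \otimes s)|_V \coleq F|_V \otimes s|_V$ on the source (just introduced), and the restriction of Theorem \ref{eloc_restriction} on the target. Since $\varphi_U$ is $C^\infty(U)$-linear and both restriction maps are additive, it suffices to prove $\varphi_U(F \otimes s)|_V = \varphi_V(F|_V \otimes s|_V)$ for a single elementary tensor $F \otimes s$ with $F \in \Gloc U\Delta$ and $s \in \Gamma(U,E)$.

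Recall that $\varphi_U(F \otimes s)$ is the local section $R$ with $R(\Phi) = F(\Phi) \cdot s$ for $\Phi \in \VSO{U, \Delta}$, the product being pointwise multiplication of a smooth scalar by a smooth section; accordingly $\varphi_V(F|_V \otimes s|_V)$ is the section $\tilde R$ over $V$ with $\tilde R(\Phi) = F|_V(\Phi) \cdot s|_V$. To identify $R|_V$ with $\tilde R$ I would invoke the uniqueness clause of Theorem \ref{eloc_restriction} and test equality on every open $W$ with $\overline W \csub V$; since such $W$ cover $V$ and $\Gloc{\_, E}\Delta$ is a sheaf by Theorem \ref{gsheaf}, agreement on all such $W$ forces $R|_V = \tilde R$.

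Fixing such a $W$ and $\Phi \in \VSO{V, \Delta}$, I would pick by Lemma \ref{sk_cut} some $\Phi' \in \VSO{U, \Delta}$ with $|_W \circ \Phi \circ |_V = |_W \circ \Phi'$. The defining property of restriction then yields $R|_V(\Phi)|_W = R(\Phi')|_W = F(\Phi')|_W \cdot s|_W$, while the scalar case of Theorem \ref{eloc_restriction} gives $F|_V(\Phi)|_W = F(\Phi')|_W$, so that $\tilde R(\Phi)|_W = F|_V(\Phi)|_W \cdot (s|_V)|_W = F(\Phi')|_W \cdot s|_W$. The two sides coincide, which is exactly the computation already performed for the Hom-version in Theorem \ref{gqlocpsiso}.

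I expect no genuine obstacle beyond the bookkeeping; the one point deserving explicit care is the legitimacy of the reduction to elementary tensors, which hinges on the well-definedness of the componentwise restriction on a tensor product taken over the varying rings $C^\infty(U)$ and $C^\infty(V)$. Concretely one checks that restriction respects $(fF) \otimes s = F \otimes (fs)$ for $f \in C^\infty(U)$, which holds because section restriction and scalar restriction are $C^\infty$-linear and $f|_V$ multiplies both tensor factors identically. Granting this (already implicit in the preceding paragraph), the naturality square commutes and $\varphi$ is an isomorphism of presheaves.
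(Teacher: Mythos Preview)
Your argument is correct and follows essentially the same pattern as the paper's proof: reduce to elementary tensors $F\otimes s$, test equality on open $W$ with $\overline W\csub V$, and verify that both sides agree there. The paper phrases the verification through the $\Hom$-picture, contracting with a section $t\in\Gamma(E^*)$ (extended to $\tilde t$ on the larger set) so that both sides become $(F\cdot(s\cdot\tilde t))|_W$; you instead evaluate directly on smoothing operators via the explicit formula $R(\Phi)=F(\Phi)\cdot s$. These are minor variations of the same computation.

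One small technical remark: Lemma~\ref{sk_cut} is formulated for nets in $\TOv$, $\TOvz$, $\TOvl$, not for a single $\Phi\in\VSO{V,\Delta}$. The existence of $\Phi'$ you need is immediate nonetheless (multiply by a cutoff $\chi\in C^\infty(U)$ with $\chi\equiv 1$ near $\overline W$ and $\supp\chi\subseteq V$, and set $\Phi'(u)\coloneqq\chi\cdot\Phi(u|_V)$), and is in any case implicit in the existence part of Theorem~\ref{eloc_restriction}. Also, since your identity $R|_V(\Phi)|_W=\tilde R(\Phi)|_W$ holds literally at the representative level for all $\Phi$ and for a cover of such $W$, you get $R|_V=\tilde R$ already in $\Gbloc{V,E}\Delta$; invoking the sheaf property of the quotient (Theorem~\ref{gsheaf}) is harmless but not actually needed.
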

\begin{proof}
 Let $R =F \otimes s$. Then $R|_U = F|_U \otimes s|_U$ if $R|_U \cdot t = (F|_U \otimes s|_U)\cdot t$ for all $t \in \Gamma(U, E)$, which is the case if $(R|_U \cdot t)|_W = (F|_U \cdot s|_U \cdot t)|_W$ for all open $W$ with $\overline{W} \subseteq U$. Choose $\tilde t$ with $\tilde t = t$ on $W$, then both sides are equal to $(F \cdot (s \cdot \tilde t))|_W$. The inverse is a presheaf morphism as in the proof of Theorem \ref{gqlocpsiso}.
\end{proof}

With this we obtain (using the Einstein summation convention):
\begin{corollary}
Let $E$ be trivial over the open set $U \subseteq M$, $(b_i)_i$ a basis of $\Gamma(U,E)$ and $(\beta^i)_i$ the dual basis of $\Gamma(U, E^*)$.
\begin{enumerate}[label=(\roman*)]
 \item For $R \in \Gloc E \Delta$ and $\theta \in \Gloc {E^*} \Delta$,
 \[ R(\theta)|_U = R^i \theta_i \]
 where $(R^i)_i$ and $(\theta_j)_j$ are the coordinates of $R$ and $\theta$ with respect to the given bases, respectively.
 \item If $F$ is another vector bundle which is trivial over $U$ with basis $(\tilde b_j)_j$ and dual basis $(\tilde \beta^j)_j$, $R \in \Gloc E \Delta$ and $S \in \Gloc F \Delta$,
 \[ (R \otimes S)^{ij} = R^i S^j \]
 where $(R \otimes S)^{ij}$, $R^i$ and $S^j$ denote the coordinates of $R \otimes S$, $R$ and $S$ with respect to the bases $(b_i)_i$, $(\tilde b_j)_j$ and $(b_i \otimes \tilde b_j)_{i,j}$ of $\Gamma(U, E)$, $\Gamma(U, F)$ and $\Gamma(U, E \otimes F)$, respectively.
 \item For $X \in \Gloc{TM}\Delta$ and $Y \in \Gloc E\Delta$,
 \[ (\Liet_X Y)|_U = X^i \frac{\pd Y}{\pd_i}. \]
 In particular, if $Y \in \Gloc {TM}\Delta$ then
 \[ (\Liet_X Y)|_U = (X^j \pd_j Y^i - Y^j \pd_j X^i)\pd_i. \]
\end{enumerate}
\end{corollary}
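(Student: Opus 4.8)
The common engine behind all three identities is that contraction, tensor product, coordinate extraction and the Lie derivative $\Liet$ are all defined \emph{pointwise in the vector smoothing operator}: after evaluating a generalized section on a fixed $\Phi$, each such operation reduces to the corresponding classical operation on smooth sections. My plan is therefore, in each part, to first pass to $U$ (where the frames $(b_i)_i$ and $(\beta^i)_i$ live) using the restriction machinery of Lemma~\ref{tensor_restrict}, to write $R|_U = \sum_i R^i b_i$ with $R^i = R|_U \cdot \beta^i \in \Gloc U\Delta$ (legitimate because $(1\otimes b_i)_i$ is a basis of $\Gloc{U,E}\Delta$ over $\Gloc U\Delta$ by Theorem~\ref{reprmod} and Corollary~\ref{algprop}), and then to evaluate at an arbitrary $\Phi \in \VSO{U,\Delta}$ and invoke the relevant classical coordinate formula.

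For (i) I would note that contraction commutes with restriction (it is the trace $C^1_1$ composed with $\otimes$, both of which do so by Lemma~\ref{tensor_restrict}~\ref{tensor_restrict.1}), so $R(\theta)|_U = R|_U \cdot \theta|_U$; expanding $R|_U = \sum_i R^i b_i$ and $\theta|_U = \sum_j \theta_j \beta^j$ and using $\Gloc U\Delta$-bilinearity of contraction gives $\sum_{i,j} R^i\theta_j\,(b_i\cdot\beta^j)$, and since the smooth frames are dual, $b_i\cdot\beta^j = \delta_i^j$ (a constant function), leaving $\sum_i R^i\theta_i$. For (ii) I would extract the coordinate by contracting with the dual frame vector $\beta^i\otimes\tilde\beta^j$ of $\Gamma(U,(E\otimes F)^*)$, i.e.\ $(R\otimes S)^{ij} = (R|_U\otimes S|_U)\cdot(\beta^i\otimes\tilde\beta^j)$ by Lemma~\ref{tensor_restrict}~\ref{tensor_restrict.1}; evaluating at $\Phi$ and using Definition~\ref{basop}, $(R\otimes S)(\Phi) = R(\Phi)\otimes_{C^\infty(U)} S(\Phi)$, so the classical identity $(s\otimes t)\cdot(\alpha\otimes\gamma) = (s\cdot\alpha)(t\cdot\gamma)$ splits the contraction into $(R(\Phi)\cdot\beta^i)(S(\Phi)\cdot\tilde\beta^j) = R^i(\Phi)S^j(\Phi)$, which equals $(R^iS^j)(\Phi)$ since the product on $\Gloc U\Delta$ is the pointwise-in-$\Phi$ product.

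For (iii) I would use Lemma~\ref{tensor_restrict}~\ref{tensor_restrict.3} to reduce to $(\Liet_X Y)|_U = \Liet_{X|_U} Y|_U$ and then evaluate: by the definition of $\Liet$ for generalized vector fields, $(\Liet_{X|_U}Y|_U)(\Phi) = \Lie_{X(\Phi)}(Y(\Phi))$, the \emph{classical} Lie derivative of the smooth section $Y(\Phi)$ along the smooth field $X(\Phi) = \sum_i X^i(\Phi)\pd_i$. Applying the classical coordinate expression for $\Lie$ on the natural bundle $E$ — the quantity abbreviated $X^i\,\pd Y/\pd_i$ — and noting that the coordinate derivatives $\pd_k$ and products appearing in it are again defined pointwise in $\Phi$ (the former being $\Liet_{\pd_k}$ on $\Gloc U\Delta$), the $\Phi$-wise equality of smooth sections upgrades verbatim to an identity in $\Gloc U\Delta$. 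For $E = TM$ the classical formula is the Lie bracket $[X(\Phi),Y(\Phi)]^i = X^k(\Phi)\pd_k Y^i(\Phi) - Y^k(\Phi)\pd_k X^i(\Phi)$, whence $(\Liet_X Y)^i = X^k\pd_k Y^i - Y^k\pd_k X^i$.

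The main obstacle is part (iii). In contrast to the purely algebraic (i) and (ii), the Lie derivative of a section of a natural bundle is \emph{not} $C^\infty(U)$-linear in the direction, so one cannot merely pull $X^i$ out of $\Liet_{\pd_i}$; the derivative-of-$X$ contributions (the $-Y^j\pd_j X^i$ term for $TM$) are produced only by the full classical coordinate formula. The care required is thus to invoke that classical expression in its entirety and to verify that every factor occurring in it equals the value at $\Phi$ of the matching generalized-function operation, so that smoothness-level equality for each $\Phi$ lifts to the generalized level; the remaining bookkeeping — that restriction commutes with contraction, tensor product and $\Liet$ — is exactly what Lemma~\ref{tensor_restrict} provides.
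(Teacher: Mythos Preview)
Your proposal is correct and follows precisely the route the paper intends: the Corollary is stated there without proof, as an immediate consequence of the restriction machinery (Lemma~\ref{tensor_restrict}), the basis expansion afforded by Theorem~\ref{reprmod}/Corollary~\ref{algprop}, and the fact that contraction, tensor product and $\Liet$ are defined pointwise in $\Phi$. Your detailed unpacking of these ingredients---including the correct observation that part~(iii) requires invoking the full classical Lie-derivative formula rather than mere $C^\infty$-linearity in the direction---is exactly the argument the paper leaves implicit.
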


\section{Covariant derivatives}

Because covariant derivatives play a paramount role in many applications of differential geometry it is a principal aim of this article to define them also for generalized sections. In order to preserve the usual objects and rules of calculation of classical \mbox{(pseudo-)}Riemannian geometry we will require that generalized covariant derivatives essentially have the same properties as classical ones.

In particular, we want the curvature of a generalized covariant derivative to be a well-defined generalized tensor by the usual formula
\[ \Riem(\GVa,\GVb)\GSb = \nabla_\GVa \nabla_\GVb \GSb - \nabla_\GVb \nabla_\GVa \GSb - \nabla_{[\GVa,\GVb]} \GSb. \]

We will begin by extending a smooth covariant derivative $\nabla \colon \fX(M) \times \Gamma(E) \to \Gamma(E)$ to a generalized covariant derivative $\widehat\nabla \colon \Gb {TM}\Delta \times \Gb E\Delta \to \Gb E\Delta$. If we want this extension to have similar properties this means in particular that
\begin{enumerate}[label=(\roman*)]
\item \label{condi1} $(\GVa,\GSa) \mapsto \widehat \nabla_\GVa \GSa$ is $\bK$-linear in $\GSa$ and $C^\infty(M)$-linear in $\GVa$,
 \item \label{condi2} $\widehat \nabla$ has an extension to $\Gtb E\Delta$ which satisfies the Leibniz rule and commutes with contractions, and
 \item \label{condi3} $\widehat \nabla_\GVa = \nabla_\GVa$ on $\Gamma(E)$ for all $\GVa \in \fX(M)$.
\end{enumerate}

By \ref{condi2} and \ref{condi3} for $\GVa \in \fX(M)$ and $\GSa \in \Gb E\Delta$ we have
\begin{equation}\label{blarg}
(\widehat \nabla_\GVa \GSa)\cdot \SDa = \widehat\nabla_\GVa (\GSa \cdot \SDa) - \GSa \cdot \nabla_\GVa \SDa \qquad (\SDa \in \Gamma(E^*)).
\end{equation}
Because $\GSa \cdot \alpha$ is an element of $\Gb M\Delta$ it suffices to prescribe how $\widehat\nabla_\GVa$ acts on that space (i.e., on scalar generalized functions), where it should be given by a Lie derivative. We have two Lie derivatives available, but because of \ref{condi1} the map $\widehat \nabla \colon \fX(M) \times \Gb M\Delta \to \Gb M\Delta$ has to be $C^\infty(M)$-linear in $\GVa$, which rules out $\Lieh_\GVa$ for this because it commutes with the embedding, hence cannot be  $C^\infty(M)$-linear in $\GVa$ (see \cite{zbMATH05722389}). Hence, we have to set $\widehat\nabla_\GVa \GFa \coleq \Liet_\GVa \GFa$ for $\GFa \in \Gb M\Delta$ and accordingly define the extension of $\nabla$ to generalized sections of $E$ as
\[ (\widehat \nabla_\GVa \GSa) (\Phi) \coleq \nabla_\GVa ( \GSa(\Phi) ) \qquad (\GVa \in \fX(M), \GSa \in \Gb E\Delta, \Phi \in \VSO \Delta). \]
Because $\GVa \mapsto \widehat\nabla_\GVa \GSa$ for fixed $\GSa \in \Gb E\Delta$ actually is a mapping in
\[ \Hom_{C^\infty(M)} ( \fX(M), \Gb E\Delta) \cong \Hom_{\Gb M\Delta} ( \Gb {TM}\Delta, \Gb E\Delta), \]
the natural extension of $\widehat \nabla$ to generalized directions $\GVa \in \Gb {TM}\Delta$ is given by $(\widehat \nabla_\GVa \GSa)(\Phi) \coleq \nabla_{\GVa(\Phi)} (\GSa(\Phi))$.

In all of this it is crucial that our basic space $\Gb E\Delta$ is big enough to accomodate $\Liet$-derivatives, contrary to the case of \cite{global} or \cite{global2} where there was not even a way to define this Lie derivative, let alone a meaningful covariant derivative. Furthermore, it will be important to see (Proposition \ref{prop_assoc}) that $\Liet_X$ is equal to $\Lieh_X$ for embedded distributions on the level of association.

Motivated by these preliminary considerations we now give the general definition of covariant derivatives in our setting.

\begin{definition}\label{def_covarder}A \emph{generalized covariant derivative} on a vector bundle $E$ is a mapping $\nabla \colon \Gb {TM}\Delta \times \Gb E\Delta \to \Gb E\Delta$ such that for $\GVa, \GVb \in \Gb {TM}\Delta$, $\GSa,\GSb \in \Gb E\Delta$ and $\GFa \in \Gb M \Delta$,
 \begin{enumerate}[label=(\roman*)]
  \item $\nabla_{\GVa+\GVb}\GSa = \nabla_\GVa \GSa + \nabla_\GVb \GSa$,
  \item $\nabla_{\GFa \GVa} \GSa = \GFa \nabla_\GVa \GSa$,
  \item $\nabla_\GVa (\GSa+\GSb) = \nabla_\GVa \GSa + \nabla_\GVa \GSb$,
  \item $\nabla_\GVa (\GFa\GSa) = (\Liet_\GVa \GFa) \GSa + \GFa \nabla_\GVa \GSa$.
 \end{enumerate}
\end{definition}

$\nabla$ extends in a unique way to a derivation on $\Gtb E\Delta$ such that $\nabla_\GVa \GSa = \Liet_\GVa \GSa$ for $\GSa \in \Gb M\Delta$.  

Obviously, the curvature is well-defined with the usual formula as follows:

\begin{definition}Let $\nabla$ be a generalized covariant derivative on $M$.
The \emph{curvature tensor} $\Riem \in \Gb{\Hom(\Lambda^2TM \otimes E, E)}\Delta$ of 
$\nabla$ is defined by
\[ \Riem(\GVa,\GVb)\GSb \coleq \nabla_\GVa \nabla_\GVb \GSb - \nabla_\GVb\nabla_\GVa \GSb - \nabla_{[\GVa,\GVb]} \GSb \qquad (\GVa,\GVb \in \Gb {TM}\Delta, \GSb \in \Gb E\Delta. \]
\end{definition}

As discussed above, in order to view a smooth covariant derivative on $M$ as a generalized one we apply it componentwise, i.e., for fixed $\Phi$. This is exactly the approach used in the special algebra (\cite{genpseudo}). From now on we write $\nabla$ instead of $\widehat\nabla$.

\begin{definition}Any smooth covariant derivative $\nabla$ on $E$ extends to a generalized covariant derivative on $E$ by defining, for $\GVa \in \Gb {TM}\Delta$ and $\GSa \in \Gb E\Delta$,
\[ (\nabla_\GVa \GSa)(\Phi) \coleq \nabla_{\GVa(\Phi)} ( \GSa(\Phi) ) \qquad (\Phi \in \VSO \Delta). \]
\end{definition}

Suppose we are given two generalized covariant derivatives $\nabla$ and $\nabla'$ on $E$. Then $\GSb(\GVa,\GSa) \coleq \nabla_\GVa \GSa - \nabla'_\GVa \GSa$ is in fact $\Gb M\Delta$-bilinear, as is easily verified, thus defines an element of $\Gb{T^*M \otimes E^* \otimes E}\Delta$. Conversely, for any element $\GSb$ of that space and a generalized covariant derivative $\nabla$, $\nabla + \GSb$ again is a generalized covariant derivative. In other words, the space of generalized connections on $E$ is an affine space over $\Gb{T^*M \otimes E^* \otimes E}\Delta$.

We will now transfer the notion of generalized covariant derivative to the quotient.

\begin{definition}
 A generalized covariant derivative $\nabla$ is called \emph{moderate} if $\nabla_\GVa \GSa \in \Gm E \Delta$ for all $\GVa \in \fX(M)$ and $\GSa \in \Gm E \Delta$.
\end{definition}

\begin{proposition}\label{asdfasdf}
 \begin{enumerate}[label=(\roman*)]
  \item\label{modcd.1} Every smooth covariant derivative is moderate.
  \item\label{modcd.2} If $\nabla$ is moderate then $\nabla_\GVa \GSa \in \Gm E \Delta$ for all $\GVa \in \Gm{TM}\Delta$ and $\GSa \in \Gm E\Delta$.
  \item\label{modcd.3} $\nabla$ is moderate if and only if $\nabla - \nabla' \in \Gm{T^*M \otimes E^* \otimes E}\Delta$ for one (hence any) moderate generalized covariant derivative $\nabla'$.
  \item\label{modcd.4} If $\nabla$ is moderate, $\GVa \in \Gm{TM}\Delta$ and $\GSa \in \Gm E \Delta$, then $\nabla_\GVa \GSa \in \Gn E\Delta$ if $\GVa$ or $\GSa$ is negligible.
 \end{enumerate}
\end{proposition}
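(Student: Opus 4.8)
The plan is to use (i) and (iii) as the structural backbone and to deduce (ii) and (iv) from the resulting affine decomposition together with the algebra of moderate and negligible elements established in Proposition \ref{modalgprop}. The one fact I would isolate and reuse is that a fixed smooth covariant derivative $\nabla_0$ gives a \emph{continuous bilinear} map $B\colon \Gamma(TM)\times\Gamma(E)\to\Gamma(E)$, $B(\xi,s)=(\nabla_0)_\xi s$, so that $((\nabla_0)_X R)(\Phi)=B(X(\Phi),R(\Phi))$ for generalized arguments. By \cite{KM} such a $B$ is smooth, its differentials obey the Leibniz-type expansion
\[ \ud^j\bigl(\Phi\mapsto B(X(\Phi),R(\Phi))\bigr)(\Phi)(\Psi_1,\dotsc,\Psi_j)=\sum_{I\sqcup J=\{1,\dotsc,j\}} B\bigl(\ud^{|I|}X(\Phi)((\Psi_i)_{i\in I}),\,\ud^{|J|}R(\Phi)((\Psi_i)_{i\in J})\bigr), \]
and continuity of $B$ yields, for each $\csna\in\csn{\Gamma(E)}$, seminorms $\csnb\in\csn{\Gamma(TM)}$ and $\csnc\in\csn{\Gamma(E)}$ with $\csna(B(\xi,s))\le\csnb(\xi)\csnc(s)$.

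For (i) I would note that for smooth $X\in\fX(M)$ the operator $\nabla_X\colon\Gamma(E)\to\Gamma(E)$ is continuous and linear, that $(\nabla_X R)(\Phi)=\nabla_X(R(\Phi))$, and that post-composition with a continuous linear map commutes with every $\ud^j$ and dominates seminorms; hence $\nabla_X R$ inherits the moderateness estimate of $R$ through Theorem \ref{simplemod}. This settles (i) and, in particular, exhibits smooth covariant derivatives as moderate reference connections, so that (iii) is non-vacuous. For the backward direction of (iii) I use the affine structure: any two generalized covariant derivatives differ by an $S\in\Gb{T^*M\otimes E^*\otimes E}\Delta$, with $\nabla_X R-\nabla'_X R=S(X,R)$ where $S(X,R)(\Phi)=S(\Phi)(X(\Phi),R(\Phi))$ is the pointwise action of the bundle map $S(\Phi)\colon TM\times E\to E$. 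With $\nabla'$ moderate and $S$ moderate, for $X\in\fX(M)$ the first summand of $\nabla_X R=\nabla'_X R+S(X,R)$ is moderate by definition, while $S(X,R)$ is moderate because $X\otimes R$ is moderate and contraction against the moderate tensor $S$ preserves moderateness (Proposition \ref{modalgprop}); thus $\nabla$ is moderate. The ``hence any'' clause is immediate once the forward direction shows that differences of moderate connections are moderate tensors, i.e.\ the moderate connections form a single coset.

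The forward direction of (iii) is the main obstacle. Given $\nabla,\nabla'$ moderate I must show $S=\nabla-\nabla'$ is moderate, and since moderateness of a section is tested locally (projective topology of $\Gamma(E)$, cf.\ the remark following Definition \ref{def_testing}) it suffices to bound the local frame components $S^a_{ib}$ on charts. The difficulty is that the frame fields $\partial_i,b_b,\beta^a$ are only local, whereas moderateness of $\nabla$ is defined by feeding it \emph{global} smooth vector fields. I would resolve this with a cutoff $\chi\in C^\infty(M)$ supported in the chart and equal to $1$ on a smaller open set $V$: the global fields $\chi\partial_i\in\fX(M)$ and $\chi b_b\in\Gamma(E)$ satisfy $\nabla_{\chi\partial_i}(\chi b_b)-\nabla'_{\chi\partial_i}(\chi b_b)=S(\chi\partial_i,\chi b_b)$, whose restriction to $V$ equals $S(\cdot)(\partial_i,b_b)|_V$ by pointwise contraction (as $\chi\equiv 1$ there). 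Since $\nabla_{\chi\partial_i}(\chi b_b)$ and $\nabla'_{\chi\partial_i}(\chi b_b)$ are moderate by hypothesis, so is their difference, and contracting with the smooth $\beta^a$ shows each $S^a_{ib}$ is moderate on $V$; covering $M$ by such $V$ yields moderateness of $S$.

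Finally, for (ii) and (iv) I fix a smooth (hence moderate) $\nabla_0$ and write $\nabla=\nabla_0+S$ with $S$ moderate by (iii), so that $\nabla_X R=B(X(\cdot),R(\cdot))+S(X,R)$ for arbitrary $X\in\Gm{TM}\Delta$ and $R\in\Gm E\Delta$. The bilinear term is controlled by the displayed differential expansion and the seminorm bound of the first paragraph: each summand is dominated by $\csnb(\ud^{|I|}X(\Phi_\e)(\dotsc))\,\csnc(\ud^{|J|}R(\Phi_\e)(\dotsc))$, a product of two polynomially bounded factors, hence $O(\e^{-N})$, which gives (ii); the tensorial term $S(X,R)$ is moderate by the algebra of moderate elements. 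For (iv), if $X$ or $R$ is negligible then in the same estimate one factor is $O(\e^m)$ for every $m$ while the other remains $O(\e^{-N})$, so the product is $O(\e^{m'})$ for all $m'$, and $S(X,R)$ is negligible because a tensor product with a negligible factor is negligible (Proposition \ref{modalgprop}) and contraction preserves negligibility; hence $\nabla_X R$ is negligible.
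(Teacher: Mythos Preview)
Your argument is correct, but it takes a more analytic route than the paper's, which is largely module-theoretic. For (i) the paper does not use continuity of $\nabla_X$; instead it invokes the contraction identity $(\nabla_X R)\cdot\alpha=\Liet_X(R\cdot\alpha)-R\cdot\nabla_X\alpha$ and observes both terms are moderate by Proposition \ref{modalgprop}. For (ii) the paper bypasses your bilinear Leibniz expansion entirely: once $\nabla$ is moderate, $[X\mapsto\nabla_X R]$ lies in $\Hom_{C^\infty(M)}(\fX(M),\Gm E\Delta)$, and the isomorphism $\Hom_{C^\infty(M)}(\fX(M),\Gm E\Delta)\cong\Hom_{\Gm M\Delta}(\Gm{TM}\Delta,\Gm E\Delta)$ (the moderate analogue of Corollary \ref{algprop} \ref{algprop.6}, following from Proposition \ref{modalgprop} \ref{modalgprop.3}) immediately yields the extension to generalized $X$. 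This is slicker and, unlike your proof, does not rely on (iii). For (iii) the paper simply writes $\nabla_X R=(\nabla-\nabla')_X R+\nabla'_X R$ and declares the claim clear; your cutoff argument for the forward direction is really just an explicit unpacking of how Proposition \ref{modalgprop} \ref{modalgprop.3} reduces moderateness of $S$ to moderateness of its local frame components $S(X,s)\cdot\alpha$. For (iv) the paper again uses the decomposition $\nabla=\nabla'+S$ with smooth $\nabla'$ and the contraction identity from (i), which is essentially your approach. In short: your seminorm-and-bilinear-map route works and is perhaps more self-contained, while the paper leans on the module isomorphisms already in place to avoid any hands-on estimates.
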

\begin{proof}
\ref{modcd.1}: Let $\GVa \in \fX(M)$ and $\GSa \in \Gm E\Delta$. Then $\nabla_\GVa\GSa$ is moderate if and only if
\begin{equation}\label{modcd.eq}
(\nabla_\GVa \GSa)\cdot \SDa = \Liet_\GVa (\GSa \cdot \SDa) - \GSa \cdot \nabla_\GVa \SDa
\end{equation}
is moderate for all $\SDa \in \Gamma(E^*)$, which obviously is the case.

\ref{modcd.2}: For $\GSa \in \Gm E \Delta$, $[ \GVa \mapsto \nabla_\GVa \GSa ]$ is an element of \[ \Hom_{C^\infty(M)} ( \fX(M), \Gm E \Delta) \cong \Hom_{\Gm M \Delta} ( \Gm {TM}\Delta, \Gm E \Delta). \]

\ref{modcd.3}: This is clear from $\nabla_\GVa \GSa = (\nabla - \nabla')_\GVa \GSa + \nabla'_\GVa\GSa$.

\ref{modcd.4}: For any a smooth covariant derivative $\nabla'$, $\nabla_\GVa\GSa = \nabla'_\GVa \GSa + S(\GVa,\GSa)$ for some $S \in \Gm{T^*M \otimes E^* \otimes E}\Delta$. From this and \eqref{modcd.eq} the claim follows.
\end{proof}

Hence, the following is well-defined.

\begin{definition}Given a moderate generalized covariant derivative $\nabla$ on $E$, its action on $\Gq E \Delta$ is defined as $\nabla_{[X]} [R] \coleq [\nabla_X R]$, where $[X] \in \Gq{TM}\Delta$ and $R \in \Gq E\Delta$.
\end{definition}

We will also consider the locality condition of Definition \ref{def_locprop} for covariant derivatives.

\begin{definition}A generalized covariant derivative $\nabla$ on $E$ is said to be \emph{local} if $\nabla_X R \in \Gbloc E \Delta$ for all $X \in \fX(M)$ and $R \in \Gbloc E \Delta$.
\end{definition}

\begin{proposition}
 \begin{enumerate}[label=(\roman*)]
  \item\label{loccd.1} Every smooth covariant derivative is local.
  \item\label{loccd.2} If $\nabla$ is local then $\nabla_X R \in \Gbloc E \Delta$ for all $X \in \Gbloc {TM}\Delta$ and $R \in \Gbloc E \Delta$.
  \item\label{loccd.3} $\nabla$ is local if and only if $\nabla - \nabla' \in \Gbloc {T^*M \otimes E^* \otimes E}\Delta$ for one (hence any) local generalized covariant derivative.
 \end{enumerate}
\end{proposition}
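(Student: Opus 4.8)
The three statements are the locality counterparts of the moderateness statements in Proposition~\ref{asdfasdf}, and the plan is to prove \ref{loccd.1}--\ref{loccd.3} in the same order, substituting locality for moderateness throughout. For \ref{loccd.1}, recall that a smooth covariant derivative acts on the basic space by $(\nabla_X R)(\Phi) = \nabla_X(R(\Phi))$ for $X \in \fX(M)$, where on the right $\nabla$ is the classical operator. The only analytic input needed is that a classical covariant derivative is a \emph{local operator} on $\Gamma(E)$: if two smooth sections agree on an open set $U$, so do their covariant derivatives. This follows from the Leibniz rule by multiplying the difference with a bump function supported in $U$. Hence, given $R \in \Gbloc E\Delta$ and $\Phi,\Phi' \in \VSO\Delta$ with $|_U \circ \Phi = |_U \circ \Phi'$, locality of $R$ yields $R(\Phi)|_U = R(\Phi')|_U$, and locality of the classical $\nabla$ then gives $(\nabla_X R)(\Phi)|_U = \nabla_X(R(\Phi))|_U = \nabla_X(R(\Phi'))|_U = (\nabla_X R)(\Phi')|_U$, so $\nabla_X R \in \Gbloc E\Delta$.

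For \ref{loccd.2}, I would exploit that $\fX(M) = \Gamma(TM)$ is a finitely generated projective $C^\infty(M)$-module. By the local representation $\Gbloc{TM}\Delta \cong \Gbloc M\Delta \otimes_{C^\infty(M)} \fX(M)$, every $X \in \Gbloc{TM}\Delta$ is a \emph{finite} sum $X = \sum_i f_i X_i$ with $f_i \in \Gbloc M\Delta$ and $X_i \in \fX(M)$. Since $\nabla$ is $\Gb M\Delta$-linear in its direction by Definition~\ref{def_covarder}(ii), we get $\nabla_X R = \sum_i f_i\, \nabla_{X_i} R$. Each $\nabla_{X_i} R$ lies in $\Gbloc E\Delta$ by the locality hypothesis, and because $\Gbloc E\Delta$ is a $\Gbloc M\Delta$-module, the products $f_i\, \nabla_{X_i} R$ and their sum remain local. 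This is the exact analogue of the module-theoretic step in Proposition~\ref{asdfasdf}\ref{modcd.2}.

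For \ref{loccd.3}, recall that the difference $S \coleq \nabla - \nabla'$ of two generalized covariant derivatives is $\Gb M\Delta$-bilinear and thus defines an element of $\Gb{T^*M \otimes E^* \otimes E}\Delta$, acting by contraction as $S(X,R) = \nabla_X R - \nabla'_X R$. For the implication $\Leftarrow$, assume $S \in \Gbloc{T^*M \otimes E^* \otimes E}\Delta$ and $\nabla'$ local; then for $X \in \fX(M)$ and $R \in \Gbloc E\Delta$ I write $\nabla_X R = \nabla'_X R + S(X,R)$, where the first summand is local by hypothesis and $S(X,R)$ is the contraction of the local tensor $S$ with the local section $X \otimes R \in \Gbloc{TM \otimes E}\Delta$, hence local because tensor products and contractions preserve locality. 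For $\Rightarrow$, assuming $\nabla$ and $\nabla'$ both local, we have $S(X,s) = \nabla_X s - \nabla'_X s \in \Gbloc E\Delta$ for every $X \in \fX(M)$ and every smooth $s \in \Gamma(E) \subseteq \Gbloc E\Delta$; by $C^\infty(M)$-bilinearity this extends to $S \cdot \tau \in \Gbloc E\Delta$ for all $\tau \in \Gamma(TM \otimes E)$, and the local representation theorem (the analogue of Theorem~\ref{reprmod} and Corollary~\ref{algprop} for $\Gbloc{\_}\Delta$) then identifies $S$ as an element of $\Gbloc{T^*M \otimes E^* \otimes E}\Delta$.

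Most of the work is structural and follows once the module-theoretic machinery is invoked. The one step requiring genuine care is the $\Rightarrow$ direction of \ref{loccd.3}: deducing locality of the \emph{tensor} $S$ from locality of all its contractions $S \cdot \tau$ is not purely formal but rests on the compatibility of the representation isomorphism of Theorem~\ref{reprmod} with the locality condition. I would therefore verify this compatibility explicitly — that the isomorphism $\Gb{T^*M\otimes E^*\otimes E}\Delta \cong \Hom_{C^\infty(M)}(\Gamma(TM\otimes E), \Gb E\Delta)$ restricts to an isomorphism of the local submodules — rather than treat it as a black box, since it is precisely where the local character of $S$ is pinned down.
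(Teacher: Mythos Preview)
Your proof is correct and follows essentially the same approach as the paper. The paper's own proof is extremely terse: for \ref{loccd.1} it gives the same chain of equalities you give (using that the classical $\nabla$ is a local operator), and for \ref{loccd.2} and \ref{loccd.3} it simply says ``seen as in Proposition~\ref{asdfasdf}.'' Your version unpacks these references, and for \ref{loccd.2} you use the tensor-product representation $X = \sum_i f_i X_i$ explicitly, whereas the paper invokes the $\Hom$ isomorphism $\Hom_{C^\infty(M)}(\fX(M), \Gbloc E\Delta) \cong \Hom_{\Gbloc M\Delta}(\Gbloc{TM}\Delta, \Gbloc E\Delta)$ directly; these are two faces of the same module-theoretic fact.

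Your caution about the $\Rightarrow$ direction of \ref{loccd.3} is well-founded but already addressed in the paper: the compatibility of the representation isomorphism of Theorem~\ref{reprmod} with locality is precisely the content of the earlier proposition stating $\Gbloc E\Delta \cong \Gbloc M\Delta \otimes_{C^\infty(M)} \Gamma(E) \cong \Hom_{C^\infty(M)}(\Gamma(E^*), \Gbloc M\Delta)$, so no additional verification is needed beyond citing that result.
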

\begin{proof}
 \ref{loccd.1}: If $|_U \circ \Phi = |_U \circ \Phi'$, then $(\nabla_X R)(\Phi)|_U = (\nabla_X R(\Phi))|_U = \nabla_{X|_U} R(\Phi)|_U = \nabla_{X|_U} R(\Phi')|_U = (\nabla_X R)(\Phi')|_U$.

\ref{loccd.2} and \ref{loccd.3} are seen as in Proposition \ref{asdfasdf}.
\end{proof}

By the above, a generalized covariant derivative which is local and moderate is well-defined as a map $\Gloc{TM}\Delta \times \Gloc E\Delta \to \Gloc E\Delta$. In particular, this applies to smooth covariant derivatives.

If $\nabla$ is a local generalized covariant derivative on $E$ then for $R \in \Gbloc{U, E}\Delta$ and $X \in \Gbloc{U, TM}\Delta$, $(\nabla_X R)|_V = \nabla_{X|_V} R|_V$.

For a local moderate generalized covariant derivative $\nabla$ on $E$,
we recover the well-known coordinate formula on an open subset $U \subseteq M$,
 \[ (\nabla_X R)|_U = ( X^i \cdot \frac{\pd R^k}{\pd_i} + X^i R^j \Gamma_{ij}^k ) b_k \]
where $(b_i)_i$ is a basis of $\Gamma(U,E)$, $(\beta^j)_j$ its dual basis, and $\pd R^k/\pd_i \coleq \Liet_{\pd_i} R^k$ as well as $\Gamma_{ij}^k \coleq (\nabla_{\pd_i} b_j) \cdot \beta^k$.

As a very basic outlook to the development of pseudo-Riemannian geometry in this setting, we will now describe how a generalized Levi-Civita derivative can be obtained. For this purpose, start with a metric tensor $g$ which can be an element either of $\Gbloc{T^0_2M}\Delta$, $\Gb{T^0_2M}\Delta$, $\Gloc{T^0_2M}\Delta$ or $\Gq{T^0_2M}\Delta$, depending on the concrete application. In any case, we call $g$ \emph{non-singular} if the induced mapping $\widetilde g$ defined by $\widetilde g(X)(Y) \coleq g(X,Y)$ on the corresponding spaces is bijective. Following the classical proof (see e.g.\cite[\S 3]{oneill} and also \cite[Theorem 3.2.82]{GKOS}) one can then show the existence of a unique generalized covariant derivative $\nabla\colon (X,Y) \mapsto (X,Y)$ where $X,Y$ are elements of $\Gbloc{TM}\Delta$, $\Gb{TM}\Delta$, $\Gloc{TM}\Delta$ or $\Gq{TM}\Delta$, accordingly, satisfying the conditions
\[ \nabla_X Y - \nabla_Y X = [X,Y]\text{ and } \nabla_X g = 0 \]
for all $X,Y$.
Moreover, if $g$ is a smooth metric $\nabla$ coincides with the classical Levi-Civita derivative.

Further topics like generalized curves, flows and geodesics as well as the connection to results obtained by purely distributional methods will be studied and published separately; it shall suffice for the moment to point out the significance of our approach in this context.

The largest reasonable class of metrics one can work with in a distributional setting, in particular such that the curvature tensor is well-defined as a distribution, is the class of gt-regular metrics introduced by Geroch and Traschen \cite{gerochtraschen}. However, these can only be used to describe solutions of Einstein's equations with singular support having codimension one, which excludes many interesting phenomena. In order to calculate with singularities of higher codimension, one necessarily has to leave distribution theory and work with a theory of nonlinear generalized functions.

Using the special algebra, the curvature of several important singular metrics has been calculated and given a distributional interpretation in terms of assication (see \cite{genrel, JVdistgen}). Manifold-valued generalized functions were used to study geodesics of certain classes of singular space-times (\cite{geodesics, SS:12, SS:13}); another recent development is a solution theory for the Cauchy problem on non-smooth manifolds with weakly singular Lorentzian metrics (\cite{HKS:12}).

These results show that already in the special algebra, the use of nonlinear generalized functions in general relativity can lead to genuinely new results which were not possible using in distribution theory. From this vantage point it is to be expected that the nonlinear theory of generalized sections presented in this article will serve as a basis for further applications in this field. In this setting, it will be possible for the first time to obtain purely geometrical results, not depending on any choice of coordinate system used for the embedding of distributions, and to have both the sheaf property and a meaningful covariant derivative available.

\textbf{Acknowledgments.} The author thanks M.~Kunzinger and J.~Vickers for helpful discussions. This work was supported by the Austrian Science Fund (FWF) projects P23714 and P25064.

\bibliographystyle{abbrv}
\bibliography{master.bib}

Faculty of Mathematics, University of Vienna, Oskar-Morgenstern-Platz 1, 1090 Vienna, Austria\\
E-Mail address: eduard.nigsch@univie.ac.at
\end{document}